\tikzset{> = stealth}
\newtheorem{thm}{Theorem}[section]
\newtheorem{prp}[thm]{Proposition}
\newtheorem{lem}[thm]{Lemma}
\newtheorem{cor}[thm]{Corollary}
\theoremstyle{change}
\theoremstyle{nonumberplain}
\theoremstyle{definition}
\newtheorem{dfn}[thm]{Definition}
\theoremstyle{remark}
\newtheorem{remark}[thm]{Remark}
\numberwithin{equation}{section}
\begin{document}
\title[Compactifications of configuration spaces and the self duality of $E_n$]{One point compactifications of configuration spaces and the self duality of the little disks operad}
\author{Connor Malin}
 \address{University of Notre Dame}
 \email{cmalin@nd.edu}
\maketitle
\begin{abstract}
   Using configuration space level Pontryagin--Thom constructions, we construct a simple Koszul self duality map for the little disks operad $\Sigma^\infty_+ E_n$. For a framed $n$-manifold $M$, we show that a compatible self duality map exists for $\Sigma^\infty_+ E_M$.
\end{abstract}

\section{Introduction}
The problem of whether a specific operad is equivalent to its own Koszul dual has been studied since Ginzburg--Kapranov originally introduced operadic Koszul duality in 1994. In their paper, they demonstrated that the associative operad was its own Koszul dual, up to a shift \cite{Ginzburg_Kapranov_1994}. Soon after, Jones--Getzler generalized this to the operad $C_*(E_n;\mathbb{Q})$ of rational chains on the little $n$-disks operad, for $0< n < \infty $ \cite{getzler_jones}.  

Around 2005, Ching introduced Koszul duality for operads of spectra via an explicit point-set model \cite{ching_2005}. Naturally, it was conjectured that $\Sigma^\infty_+ E_n$ was Koszul self dual. On the level of symmetric sequences, this had already been observed by both Ching and Salvatore, but progress was slow to be made on the operadic statement. Around 2010, Fresse used obstruction theoretic techniques to show $C_*(E_n ; \mathbb{Z})$ was Koszul self dual \cite{Fresse_2010}, providing even more evidence that $E_n$ should be topologically self dual. 

The question of the topological self duality largely took a pause for the next decade, though Lurie and Ayala--Francis made substantial progress on the Koszul duality of $E_n$-algebras themselves, showing that, up to a shift, the derived indecomposables, or Andre--Quillen homology, of an $E_n$-algebra was an $E_n$-coalgebra \cite{PKayala_francis_2019,lurieHA}. 

In 2020, the self duality of the topological $E_n$ operad was finally resolved when Ching--Salvatore produced an explicit equivalence of operads\cite{Ching_Salvatore_2022a}
\[\Sigma^\infty_+ E_n \simeq s_n K(\Sigma^\infty_+ E_n).\]
The arguments involved in constructing this equivalence are rather technical and require deep geometric understanding of the $E_n$ operad, operadic bar constructions, and compactifications of configuration spaces. Recently, we made progress toward organizing the theory of topological Koszul self duality using operads in the category of stable fibrations. As an application, we extended Ching--Salvatore's equivalence to the right modules $E_M$, defined as certain configuration spaces of disks in a framed $n$-manifold $M$ \cite{malin2023koszul}.

It is not  clear if these techniques are applicable to study the effect of the Koszul self duality of the $E_n$ operad on $E_n$-algebras. This is unsatisfying, since, at the chain level, $E_n$-algebras were the primary motivation of Jones--Getzler for demonstrating the Koszul self duality of $C_*(E_n ; \mathbb{Q})$. It has long been suspected that several $\infty$-categorical \cite{lurieHA} and geometric \cite{PKayala_francis_2019} approaches to Koszul duality of topological $E_n$-algebras should be equivalent to a combination of the self duality of $E_n$ and the usual theory of Koszul duality of algebras over an operad \cite{Francis_Gaitsgory_2011}. 

In order to begin addressing this question, we first give a simple geometric construction of the Koszul self duality of $E_n$ with respect to a version of Koszul duality recently introduced by Lurie \cite{Lurie_2023} and Espic \cite{espic2022koszul}. Simply stated, the Koszul dual of $O$ is given by the coendomorphism operad (Definition \ref{dfn:coend}) of the trivial right $O$-module:
\[K(O):= \mathrm{CoEnd}_{\mathrm{RMod}_O}(1).\]
Espic develops the general theory of Koszul duality in this framework and compares it to Ching's model of Koszul duality, showing they agree in a certain sense.

Using this definition of Koszul duality, one reminiscent of the classical Yoneda algebra approach, the self duality of $E_n$ is remarkably simple. Let $\mathcal{F}_n$ denote the configuration space model of $E_n$ known as the Fulton--MacPherson operad. Let $\mathcal{F}_{M},\mathcal{F}_{M^+}$ denote the right modules of configurations in a framed manifold $M$ and its one point compactification $M^+$, respectively \cite{markl_1999}. We show that the contravariant functoriality of $\mathcal{F}_{M^+}$ with respect to embeddings which dilate framings can be used to construct an equivalence of operads
\[\Sigma^\infty_+ E_n \xrightarrow{\simeq} \mathrm{CoEnd}_{\mathrm{RMod}_{\Sigma^\infty_+ \mathcal{F}_n}}(\mathcal{F}_{(\mathbb{R}^n)^+}).\]

There is an equivalence of right modules $\mathcal{F}_{(\mathbb{R}^n)^+}\simeq S^n \wedge 1$ coming from ``scaling configurations to $\infty$'', and so we conclude:

\begin{thm} [Theorem \ref{thm:selfdual}: Self duality of $E_n$]
    There is a zigzag of weak equivalences of operads:
    \[\Sigma^\infty_+ E_n \simeq \dots \simeq s_n K(\Sigma^\infty_+ E_n).\]
\end{thm}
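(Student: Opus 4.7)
The plan is to assemble the two ingredients highlighted in the introduction: (i) the equivalence of operads
\[\Sigma^\infty_+ E_n \xrightarrow{\simeq} \mathrm{CoEnd}_{\mathrm{RMod}_{\Sigma^\infty_+ \mathcal{F}_n}}(\mathcal{F}_{(\mathbb{R}^n)^+})\]
obtained from the contravariant functoriality of $\mathcal{F}_{M^+}$ under dilating embeddings, and (ii) the equivalence of right modules $\mathcal{F}_{(\mathbb{R}^n)^+}\simeq S^n\wedge 1$ coming from scaling configurations to infinity. Combining these will identify the target with $s_n K(\Sigma^\infty_+ E_n)$, producing the desired zigzag.

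To construct the map in (i) I would proceed arity by arity. An element $c\in E_n(k)$ is a tuple of disjoint little disks and hence an embedding $\bigsqcup_{i=1}^k \mathbb{R}^n\hookrightarrow \mathbb{R}^n$ that dilates framings. Applying the one-point compactification and invoking the contravariant functoriality of $\mathcal{F}_{(-)^+}$ under such embeddings (essentially a configuration space level Pontryagin--Thom collapse), $c$ determines a morphism of right $\Sigma^\infty_+\mathcal{F}_n$-modules
\[\mathcal{F}_{(\mathbb{R}^n)^+} \longrightarrow \mathcal{F}_{(\bigsqcup_k\mathbb{R}^n)^+} \cong \mathcal{F}_{(\mathbb{R}^n)^+}^{\otimes k}.\]
Parametrizing $c$ over $E_n(k)$ and assembling over $k$ yields a map of symmetric sequences into the coendomorphism operad, and compatibility with operadic composition follows from the naturality of the collapse construction under composition of dilating embeddings.

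The main obstacle is showing that this map is a weak equivalence in each arity. My approach would be to unfold $\mathrm{CoEnd}_{\mathrm{RMod}_{\Sigma^\infty_+\mathcal{F}_n}}(\mathcal{F}_{(\mathbb{R}^n)^+})(k)$ as a derived mapping spectrum and exhibit the right module $\mathcal{F}_{(\mathbb{R}^n)^+}$ as having a semifree resolution by modules of the form $\Sigma^\infty_+\mathcal{F}_n\circ(-)$, so that the mapping spectrum reduces to a computation purely in terms of the Pontryagin--Thom data. Comparing the resulting complex with $\Sigma^\infty_+ E_n(k)$ should come down to a Poincar\'e--Lefschetz style duality between configurations in $\mathbb{R}^n$ and configurations in $(\mathbb{R}^n)^+$ relative to the point at infinity, with the geometric core being the observation that the target $\mathcal{F}_{(\mathbb{R}^n)^+}^{\otimes k}$ is a tubular neighborhood of the image of the configuration space of $k$ framed disks in $(\mathbb{R}^n)^+$.

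Finally, to introduce the shift, I would plug the equivalence $\mathcal{F}_{(\mathbb{R}^n)^+}\simeq S^n\wedge 1$ into both slots of the coendomorphism. Since smashing with $S^n$ is an invertible endofunctor of $\mathrm{RMod}_{\Sigma^\infty_+\mathcal{F}_n}$, we get
\[\mathrm{CoEnd}(S^n\wedge 1)(k) \simeq \mathrm{Map}\bigl(S^n\wedge 1,\, S^{nk}\wedge 1^{\otimes k}\bigr) \simeq S^{n(k-1)}\wedge K(\Sigma^\infty_+ E_n)(k),\]
which is precisely $s_n K(\Sigma^\infty_+ E_n)$. Stringing together the two zigzags from (i) and (ii) yields the claimed equivalence $\Sigma^\infty_+ E_n\simeq s_n K(\Sigma^\infty_+ E_n)$.
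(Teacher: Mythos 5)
Your high-level plan agrees with the paper's: build an operadic Pontryagin--Thom map $\Sigma^\infty_+ E_n \to \mathrm{CoEnd}_{\mathrm{RMod}_{\Sigma^\infty_+\mathcal{F}_n}}(\Sigma^\infty\mathcal{F}_{(\mathbb{R}^n)^+})$, then trivialize $\mathcal{F}_{(\mathbb{R}^n)^+}\simeq S^n\wedge\mathrm{Triv}(\Sigma_1)$ and use the splitting $\mathrm{CoEnd}(S^n\wedge T)\simeq s_n\,\mathrm{CoEnd}(T)$ to land on $s_n K$. But there are three concrete gaps in the way you execute this.

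First, the target of the collapse map is not literally a tensor power. You write $\mathcal{F}_{(\bigsqcup_k\mathbb{R}^n)^+}\cong \mathcal{F}_{(\mathbb{R}^n)^+}^{\otimes k}$, but the correct identification (Lemma~\ref{lem:tensorconfig}) is $\mathcal{F}^{\mathrm{red}}_{(\bigsqcup_k\mathbb{R}^n)^+}\cong \mathcal{F}_{(\mathbb{R}^n)^+}^{\otimes k}$, where the left side has been quotiented by the submodule of infinitesimal configurations missing a component of $\bigsqcup_k\mathbb{R}^n$. So your collapse map lands in the wrong module until you postcompose with this quotient. This extra quotient map is precisely what distinguishes the paper's $\mathrm{PT}^{\mathrm{red}}$ from $\mathrm{PT}$, and it cannot be omitted: without it you simply do not have a map into the coendomorphism operad.

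Second, and more seriously, your argument that the map is a levelwise weak equivalence is not a proof and goes in a different (and harder) direction than the paper's. You propose a semifree resolution of the \emph{source} $\mathcal{F}_{(\mathbb{R}^n)^+}$ by free modules $\Sigma^\infty_+\mathcal{F}_n\circ(-)$ and then invoke a ``Poincar\'e--Lefschetz'' and ``tubular neighborhood'' picture; nothing here is made precise, and it is unclear how the resulting complex would be compared to $\Sigma^\infty_+E_n(k)$. The paper does something quite different and more economical: it exploits the fact that the \emph{target} $\Sigma^\infty\mathcal{F}^{\mathrm{red}}_{(\bigsqcup_I B^n)^+}$ is equivalent to a shifted trivial module $(S^n)^{\wedge i}\wedge\mathrm{Triv}(\Sigma_i)$ (Proposition~\ref{prp:trivial}), the source $\Sigma^\infty\mathcal{F}_{(B^n)^+}$ is a cofibrant right module (Proposition~\ref{prp:cofibrantconfig}), and the indecomposables--trivial adjunction reduces the mapping spectrum to $F(\Sigma^\infty\mathrm{Conf}(B^n,i)^+,(S^n)^{\wedge i})$. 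The Pontryagin--Thom map then becomes the stable adjoint of the evaluation pairing $E_n(i)_+\wedge\mathrm{Conf}(B^n,i)^+\to (S^n)^{\wedge i}$, which is identified with the Atiyah duality pairing of \cite{malin}. That identification is the actual geometric input; a tubular neighborhood heuristic is not a substitute for it.

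Third, even granting the shift calculation, you end up with $s_n\mathrm{CoEnd}_{\mathrm{RMod}_{\Sigma^\infty_+\mathcal{F}_n}}(T)\simeq s_n K(\Sigma^\infty_+\mathcal{F}_n)$, not $s_n K(\Sigma^\infty_+ E_n)$. You need the homotopy invariance of the coendomorphism model of Koszul duality under weak equivalences of operads (Theorem~\ref{thm:invariance}, which itself requires Corollary~\ref{cor:indequiv} and the cofibrancy of $T$) to pass from $\mathcal{F}_n$ to $E_n$; this step is not automatic and you do not address it. Relatedly, all of the swaps you make (replacing $\mathcal{F}_{(\mathbb{R}^n)^+}$ by $S^n\wedge T$ inside a coendomorphism operad, pulling $S^n$ out, etc.) require cofibrancy or ``correct mapping spectra'' hypotheses (Lemma~\ref{lem:swap}, Proposition~\ref{prp:coendsplit}, Lemma~\ref{lem:domainsphere}, Corollary~\ref{cor:cofibrantsusp}); invoking invertibility of $S^n\wedge(-)$ alone does not discharge these.
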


Similar arguments allow us to extend this result to the modules $\mathcal{F}_U$ where $U\subset \mathbb{R}^n$ is open. Applying the homotopy cosheaf techniques of \cite[Section 8]{malin2023koszul}, we conclude:

\begin{thm}[Theorem \ref{thm:selfdualitymodule}: Self duality of $\mathcal{F}_{M}$]
There is a zigzag of weak equivalences of operads
    \[\Sigma^\infty_+ \mathcal{F}_n \simeq \dots \simeq s_n K(\Sigma^\infty_+ \mathcal{F}_n).\]
    For any framed manifold $M$, there is a compatible zigzag of weak equivalences of right modules
    \[\Sigma^\infty_+ \mathcal{F}_M \simeq \dots \simeq s_{(n,n)}K(\Sigma^\infty \mathcal{F}_{M^+}).\]
\end{thm}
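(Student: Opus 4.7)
The operad zigzag is essentially already in hand: Theorem 1.1 gives $\Sigma^\infty_+ E_n \simeq s_n K(\Sigma^\infty_+ E_n)$, and since $\mathcal{F}_n$ is a point-set model for $E_n$, the same zigzag, with $E_n$ replaced everywhere by $\mathcal{F}_n$, proves the first claim. Concretely, combining the map $\Sigma^\infty_+ \mathcal{F}_n \xrightarrow{\simeq} \mathrm{CoEnd}_{\mathrm{RMod}_{\Sigma^\infty_+\mathcal{F}_n}}(\mathcal{F}_{(\mathbb{R}^n)^+})$ with the equivalence $\mathcal{F}_{(\mathbb{R}^n)^+} \simeq S^n \wedge 1$ identifies the codomain with $s_n K(\Sigma^\infty_+\mathcal{F}_n)$.

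For the module statement, I would upgrade the construction just used. The contravariant functoriality of $\mathcal{F}_{(-)^+}$ under framed embeddings that dilate framings, which produced the operadic self-duality map, also produces, for each framed $n$-manifold $M$, a natural map of right $\Sigma^\infty_+\mathcal{F}_n$-modules
\[\Sigma^\infty_+ \mathcal{F}_M \longrightarrow \underline{\mathrm{Hom}}_{\mathrm{RMod}_{\Sigma^\infty_+\mathcal{F}_n}}\bigl(\Sigma^\infty \mathcal{F}_{M^+},\,\mathcal{F}_{(\mathbb{R}^n)^+}\bigr).\]
Intuitively, a configuration $\sigma$ in $M$ determines, via a Pontryagin--Thom style construction, a collapse that ``reads off'' configurations in $M^+$ sitting near $\sigma$, translates them to be centered at $\sigma$, and then records the result as a configuration in $(\mathbb{R}^n)^+$. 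Applying $\mathcal{F}_{(\mathbb{R}^n)^+} \simeq S^n$ on the right and recognizing the remaining $S^n$ shift as coming from the Koszul duality of $\Sigma^\infty \mathcal{F}_{M^+}$ itself rewrites the target as $s_{(n,n)} K(\Sigma^\infty \mathcal{F}_{M^+})$. The compatibility with the operadic zigzag is built into the definition, since all constructions are natural in framed embeddings.

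The remaining task is to prove this map is a weak equivalence. For this I would invoke the homotopy cosheaf framework of \cite[Section 8]{malin2023koszul}. Both sides are functors in $M$ under framed open embeddings, and I would show each is a homotopy cosheaf of right $\Sigma^\infty_+\mathcal{F}_n$-modules on the category of open framed submanifolds. Granting this, it suffices to check the map is a weak equivalence on a basis of opens of $\mathbb{R}^n$, and after a second descent step, on disjoint unions of disks. On a single disk the claim is vacuous, and on disjoint unions it follows arity-by-arity from the operadic self-duality already established for $\mathcal{F}_n$.

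The main obstacle is verifying the cosheaf property for the target $U \mapsto K(\Sigma^\infty \mathcal{F}_{U^+})$. One-point compactification behaves contravariantly rather than covariantly with respect to open inclusions, and the Koszul dual is itself a mapping object, so a priori the target looks limit-like rather than colimit-like. The resolution I anticipate is to rewrite the mapping space as a space of ``incidence-restricted'' configurations in $U^+$, using the Fulton--MacPherson model of the coendomorphism object, and then to verify excision directly on Mayer--Vietoris pushouts of opens, mirroring the arguments already carried out for $\mathcal{F}_M$ in \cite[Section 8]{malin2023koszul}. Once this geometric reformulation is in place, the cosheaf verification reduces to essentially the same configuration-space gluing as on the source side.
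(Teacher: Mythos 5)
Your operad zigzag is essentially the paper's: combine Salvatore's model comparison $\Sigma^\infty_+ \mathcal{F}_n \simeq \Sigma^\infty_+ E_n$ with the operadic Pontryagin--Thom equivalence and the bookkeeping already in the proof of Theorem~\ref{thm:selfdual}. That part is fine.

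The module half has a genuine gap where you propose ``to upgrade the construction'' to a direct map $\Sigma^\infty_+ \mathcal{F}_M \to \underline{\mathrm{Hom}}_{\mathrm{RMod}}(\Sigma^\infty\mathcal{F}_{M^+},\mathcal{F}_{(\mathbb{R}^n)^+})$ for an arbitrary framed manifold $M$. This map is not actually available at that level of generality: the operadic Pontryagin--Thom construction in the paper takes as input a point of $E_U(I)$, i.e.\ an honest scaling embedding $\bigsqcup_I B^n \hookrightarrow U$, and uses the contravariant functoriality of $\mathcal{F}_{(-)^+}$ under such embeddings. A point of $\mathcal{F}_M(I)$ for a general framed $M$ is only an ``infinitesimal configuration'' and does not come with an embedding of disks, so there is no tautological collapse map to feed into $\mathcal{F}_{(-)^+}$. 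The paper avoids this: it constructs $\mathrm{PT}^{\mathrm{red}}_U$ only for open $U \subset \mathbb{R}^n$ (where $E_U$ is defined), records its naturality in $U$, and then uses \cite[Lemma~8.19]{malin2023koszul} to upgrade a \emph{zigzag} of natural weak equivalences on $\mathrm{Disk}(\mathbb{R}^n)$ to a zigzag on all of $\mathrm{Mfld}^{\mathrm{strict}}_n$. The output for general $M$ is a zigzag, not a single map, which is exactly what the theorem statement asks for.

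Your second concern — whether $M \mapsto K(\Sigma^\infty\mathcal{F}_{M^+})$ satisfies the Weiss cosheaf condition — is the right thing to worry about, but your proposed resolution (re-express the mapping spectrum via ``incidence-restricted configurations'' and check excision directly) is heavier than what is needed. The paper's route is to reduce the cosheaf condition to the underlying symmetric sequence and then observe that, by the indecomposables computation and Atiyah duality for the open framed manifold $\mathrm{Conf}(M,I)$, the symmetric sequence underlying $K(\Sigma^\infty\mathcal{F}_{M^+})$ has the homotopy type of $\Sigma^\infty_+\mathrm{Conf}(M,-)$ up to a shift; this puts it into the setting of \cite[Lemma~2.5]{campos_ricardo_idrissi}, which supplies the cosheaf property for free. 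If you fill the gap above by switching to the zigzag-plus-cosheafification strategy and replace your excision argument with this identification, your outline becomes the paper's proof.
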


\section{Related work}
The Koszul self duality of the spectral $E_n$ operad was recently proven by Ching--Salvatore \cite{Ching_Salvatore_2022a} with respect to a point--set model of Koszul duality introduced in \cite{ching_2005}. Building on this, the author constructed a compatible Koszul self duality result for the modules $E_M$ in \cite{malin2023koszul}. 

In this paper, we demonstrate the Koszul self duality of $E_n$ and $E_M$ using a new model of Koszul duality and different techniques. Though there are comparisons between this model and Ching's model \cite{espic2022koszul}, they take place in a larger category of PROPs. As such, there is not yet a comparison with the results of \cite{Ching_Salvatore_2022a,malin2023koszul}. In this paper, we do make use of \cite[Section 8]{malin2023koszul} to extend from submanifolds of $\mathbb{R}^n$ to general framed manifolds.

Koszul duality results for $E_n$-algebras have a longer history. Lurie and Ayala--Francis gave two different constructions of Koszul duality for $E_n$-algebras \cite{lurieHA,PKayala_francis_2019}. To the author's knowledge, the point--set model of Koszul duality used in \cite{Ching_Salvatore_2022a} does not have an obvious extension to algebras, and so the result of Ching--Salvatore does not immediately yield a Koszul duality of $E_n$-algebras.

This paper is greatly inspired by Ayala--Francis's work on Poincaré/Koszul duality \cite{PKayala_francis_2019}. Our construction can be interpreted as Poincaré/Koszul duality ``without the algebras''. The coendomorphism model of Koszul duality has an extension to algebras, and so when combined with our self duality result, yields a third version of Koszul duality for $E_n$-algebras. It is straightforward to compare this Koszul duality with Ayala--Francis', but there is a significant amount of work needed to recover the full statement Poincaré/Koszul duality.

\section{Acknowledgements}

I would like to thank my advisor Mark Behrens, as well as Gregory Arone,  David Ayala, Michael Ching, and Paolo Salvatore for comments on a previous draft of this paper. I would also like to thank Araminta Amabel and Ben Knudsen for communicating to me that Lurie announced a similar strategy to prove the self duality of $E_n$ in 2016. The author acknowledges the support of NSF grant DMS-1547292.

\section{Outline and conventions}

In Section \ref{section:operads}, we review the category of operads and the model category of $S$-modules. In Section \ref{section:modules}, we recall the the category of right modules and some of its basic homotopy theory. In Section \ref{section:koszul}, we discuss Koszul duality. In Section \ref{section:selfdual}, we construct the equivalence $\Sigma^\infty_+ E_n \simeq s_n K(\Sigma^\infty_+ E_n)$. In Section \ref{section:selfdualitymodules}, we prove the analogous result for the right modules $\mathcal{F}_M$.

The heart of this paper is Section \ref{section:koszul}, Section \ref{section:selfdual}, and Section \ref{section:selfdualitymodules} in which we develop Koszul duality using coendomorphism operads, construct a Pontryagin--Thom map from $E_n$ to its Koszul dual, and extend the construction to right modules. Those familiar with the basics of operads and right modules can read these sections independently with the other sections providing necessary technical support.

Throughout this paper, we make use of model categories. Our model of spectra is the model category of $S$-modules. The utility of $S$-modules lies in the fact that all $S$-modules are fibrant, and so the coendomorphism operads of cofibrant objects are homotopy invariant. The results of this paper can be replicated in any reasonable model of $\infty$-categories which can describe (co)endomorphism operads in a homotopy invariant way. 

If $(\mathcal{V},\otimes)$ is a symmetric monoidal category, one may consider $\mathcal{V}$-enriched categories $C$ which have morphism objects in $\mathcal{V}$ and composition laws and identities defined in terms of $\otimes$ and $1_\otimes$. We will always denote enriched mapping objects by $C(-,-)$, except in the case of $\mathrm{Spec}$ where we use $F(-,-)$. For an ordinary category $D$, we denote mapping sets by $\mathrm{Map}_D(-,-)$. From an enriched category $C$, we extract an ordinary category with the same objects obtained by applying $\mathrm{Map}_\mathcal{V}(1,-)$ to the morphism objects of $C$. We call this the underlying category of $C$. 

Throughout this paper, all manifolds are tame, i.e. abstractly diffeomorphic to the interior of a smooth manifold with (possibly empty) boundary.

\section{Operads and S-modules} \label{section:operads}
 The category $\mathrm{Spec}$ is the category of $S$-modules. Recall that $S$-modules form a symmetric monoidal model category with weak equivalences given by maps which induce isomorphisms on homotopy groups \cite{Mandell_May_2002b}. The internal Hom $F(-,-)$ makes it the prototypical example of a $\mathrm{Spec}$-enriched model category \cite[Appendix: Enriched model categories]{may_guillou}. It has the excellent property that all objects are fibrant. 
 
 Fix spectra $\bar{S}^0,
\bar{S}^1,\bar{S}^{-1}$ to be cofibrant replacements of $S^0,S^1,S^{-1}$, respectively. For $n>0$, we define $\bar{S}^n:= (\bar{S}^1)^{\wedge n}$. Similarly, we define $\bar{S}^{-n}:=(\bar{S}^{-1})^{\wedge n}$. The axioms of a symmetric monoidal model category imply each of these are cofibrant and weakly equivalent to the evident sphere. Throughout this paper, we will also make use of preferred cofibrant replacements of suspension spectra $\Sigma^\infty X$, at least when $X$ is a CW complex to begin with \cite[Lemma 1.5]{arone_ching_2011}:
\[\bar{S}^0 \wedge \Sigma^\infty X \rightarrow S^0 \wedge X \cong X.\]

 Fix a $\mathrm{Spec}$-enriched model category $C$ with all objects fibrant.

\begin{dfn} \label{dfn:correct}
    An object $c \in C$ ``has the correct mapping spectra'' if for any $d$ and any equivalence $c' \xrightarrow{\simeq} c$ with $c'$ cofibrant, the map
    \[C(c,d) \rightarrow C(c',d)\]
    induced by precomposition is a weak equivalence.
\end{dfn}

\begin{lem} \label{lem:correct}
    If $c$ has the correct mapping spectra, then for any weak equivalence $d \rightarrow d'$,
    \[C(c,d) \rightarrow C(c,d')\]
    induced by postcomposition is a weak equivalence.
\end{lem}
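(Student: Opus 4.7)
The plan is to reduce to the cofibrant case via a single cofibrant replacement of $c$. Choose a cofibrant replacement $c' \xrightarrow{\simeq} c$ (which exists by the model category axioms on $C$), and consider the naturality square
\[
\begin{tikzcd}
C(c,d) \arrow[r] \arrow[d] & C(c,d') \arrow[d] \\
C(c',d) \arrow[r] & C(c',d')
\end{tikzcd}
\]
where horizontal arrows are induced by postcomposition with $d \to d'$ and vertical arrows by precomposition with $c' \to c$. By the 2-out-of-3 property for weak equivalences of $S$-modules, it suffices to show that both vertical arrows and the bottom horizontal arrow are weak equivalences.

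The two vertical maps are weak equivalences directly from the hypothesis that $c$ has the correct mapping spectra (Definition \ref{dfn:correct}), applied to $d$ and to $d'$ respectively. For the bottom arrow, one invokes the defining axiom of a $\mathrm{Spec}$-enriched model category (the pushout-product / SM7 axiom): since $c'$ is cofibrant, the functor $C(c',-) \colon C \to \mathrm{Spec}$ is a right Quillen functor, and so it carries weak equivalences between fibrant objects to weak equivalences of spectra. By the standing assumption on $C$, both $d$ and $d'$ are fibrant, so the map $C(c',d) \to C(c',d')$ is indeed a weak equivalence.

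There is essentially no serious obstacle here; the argument is a routine combination of cofibrant replacement with the enriched SM7 axiom. The only point to flag is that the hypothesis ``all objects of $C$ are fibrant'' is used in an essential way, since without it one would need $d \to d'$ to be a weak equivalence between fibrant objects (or to further replace $d$ and $d'$ fibrantly) before applying the enriched SM7 axiom. Given the global fibrancy assumption, this is automatic, and the lemma follows.
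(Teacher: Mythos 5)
Your argument is correct and is essentially the same as the paper's, merely spelled out in more detail: the paper also replaces $c$ by a cofibrant replacement using the correct-mapping-spectra hypothesis and then invokes the enriched SM7 axiom together with global fibrancy to see that $C(c',-)$ preserves the weak equivalence $d \to d'$. Your explicit naturality square and 2-out-of-3 step are exactly what the paper's first sentence is tacitly invoking.
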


\begin{proof}
Since $c$ has the correct mapping spectra, we may replace it by a cofibrant replacement without changing the homotopy type of the map $C(c,d) \rightarrow C(c,d')$. The axioms of an enriched model category imply that if $c$ is cofibrant, $C(c,-)$ preserves weak equivalences of fibrant objects, so we get the desired result.
\end{proof}


A curious property of $S$-modules is that although suspension spectra of CW complexes behave poorly with respect to mapping spaces, they behave well with respect to mapping spectra. Fundamentally, this is due to the fact that although $\Sigma^\infty S^0$ is not cofibrant, it is the monoidal unit and so $F(\Sigma^\infty S^0,Z) \cong Z$ by elementary category theory. This implies that $\Sigma^\infty S^0$ has the correct mapping spectra since $F(\bar{S}^0, Z)$ has the weak equivalence type of $Z$ because $\pi_n(F(\bar{S}^0, Z)) \cong [\bar{S}^n \wedge \bar{S}^0,Z] \cong [\bar{S}^n,Z]$. 

\begin{lem}
    If $Z$ is a CW complex, $\Sigma^\infty Z$ has the correct mapping spectra.
\end{lem}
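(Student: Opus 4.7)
The strategy is to first verify the condition for the specific cofibrant replacement $\bar{S}^0 \wedge \Sigma^\infty Z \to \Sigma^\infty Z$ supplied by \cite{arone_ching_2011}, and then upgrade to an arbitrary cofibrant replacement by a standard model-categorical comparison.

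For the first step, I would apply the tensor-hom adjunction to rewrite
\[F(\bar{S}^0 \wedge \Sigma^\infty Z, d) \cong F(\bar{S}^0, F(\Sigma^\infty Z, d)),\]
under which the precomposition map $F(\Sigma^\infty Z, d) \to F(\bar{S}^0 \wedge \Sigma^\infty Z, d)$ is identified with the canonical map $X \to F(\bar{S}^0, X)$ for $X := F(\Sigma^\infty Z, d)$. This is a weak equivalence by exactly the computation the paper has just carried out for $\Sigma^\infty S^0$: one has $\pi_n F(\bar{S}^0, X) \cong [\bar{S}^n \wedge \bar{S}^0, X] \cong [\bar{S}^n, X] = \pi_n X$, and the map in question realizes this isomorphism. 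The argument applies to any spectrum $X$, which is fine because every $S$-module is fibrant.

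For the second step, let $c' \xrightarrow{\simeq} \Sigma^\infty Z$ be an arbitrary cofibrant replacement. I would factor the natural map $c' \sqcup (\bar{S}^0 \wedge \Sigma^\infty Z) \to \Sigma^\infty Z$ as a cofibration followed by a trivial fibration,
\[c' \sqcup (\bar{S}^0 \wedge \Sigma^\infty Z) \hookrightarrow \tilde{c} \twoheadrightarrow \Sigma^\infty Z,\]
so that both $c'$ and $\bar{S}^0 \wedge \Sigma^\infty Z$ include into the cofibrant object $\tilde{c}$ as trivial cofibrations between cofibrant objects. Applying $F(-, d)$ turns each into a weak equivalence of mapping spectra, yielding a commuting triangle from which we conclude that $F(\Sigma^\infty Z, d) \to F(c', d)$ is a weak equivalence.

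The only step requiring real input is the first one, and the main obstacle there is simply having access to the preferred cofibrant replacement $\bar{S}^0 \wedge \Sigma^\infty Z$: without a concrete, smash-product-shaped replacement one could not use the tensor-hom rewriting to reduce to the already-established $\Sigma^\infty S^0$ case. The comparison of cofibrant replacements in the second step is purely formal and uses only the general model-category axioms available in $S$-modules.
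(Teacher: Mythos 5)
Your proof is correct, and the underlying strategy — reduce to the preferred cofibrant replacement $\bar{S}^0 \wedge \Sigma^\infty Z$ and exploit the $\wedge\dashv F$ adjunction together with the already-established behavior of $\bar{S}^0$ — is the same as the paper's. The only differences are cosmetic: you identify the precomposition map directly with the unit $X \to F(\bar{S}^0, X)$ and quote the homotopy-group computation, whereas the paper applies $F(\bar{S}^0,-)$ and a second adjunction to replace the comparison by one between two cofibrant sources and then invokes the enriched-model-category axiom; you also spell out the reduction from an arbitrary cofibrant replacement to the preferred one, a step the paper leaves implicit.
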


\begin{proof}
   It suffices to demonstrate the hypothesis for the cofibrant replacement $\bar{S}^0 \wedge \Sigma^\infty Z \xrightarrow{\simeq} Z$. Since $\bar{S}^0$ is cofibrant, this is equivalent to showing 
   \[F(\bar{S}^0,F(\Sigma^\infty Z, Y)) \rightarrow F(\bar{S}^0,F(\bar{S}^0 \wedge\Sigma^\infty Z , Y))\]
is a weak equivalence. By adjointness, this is equivalent to showing
\[F(\bar{S}^0 \wedge \Sigma^\infty Z,Y) \rightarrow F(\bar{S}^0 \wedge \bar{S}^0 \wedge \Sigma^\infty Z,Y) \]
is a weak equivalence. However, $\bar{S}^0 \wedge \Sigma^\infty Z$ and $\bar{S}^0 \wedge \bar{S}^0 \wedge \Sigma^\infty Z$ are both cofibrant, so this follows from the fact that $F(-,Y)$ preserves weak equivalences of cofibrant objects, a consequence of being an enriched model category with all objects fibrant.
\end{proof}

Let $\mathrm{FinSet}^\cong_{\geq 1}$ denote the category of finite, nonempty sets and bijections. Let $(V,\otimes)$ be a symmetric monoidal category.

\begin{dfn}
    The category of symmetric sequences in $V$ is \[\mathrm{SymSeq}(\mathrm{V}):=\mathrm{Fun}(\mathrm{FinSet}^\cong_{\geq 1}, V).\]
\end{dfn}
When working with symmetric sequences, we often abbreviate the set $\{1,\dots,n\}$ by $n$. The composition maps for operads are defined in terms of the combinatorics of finite sets. For finite sets $I,J$ with $a \in I$, let $I \cup_a J:=I -\{a\} \sqcup J$ denote the infinitesimal composite.

\begin{dfn}
An operad in $(V,\otimes)$ is a symmetric sequence $O$ in $V$ together with morphisms called partial composites:
\[O(I) \otimes O(J) \rightarrow O(I \cup_a J)\]
for all $a \in I$. These must satisfy straightforward equivariance and associativity conditions.
\end{dfn}

We say an operad is \textit{reduced} if $O(1)=1$ and all partial composites involving $O(1)$ are the identity. In this paper, all operads are reduced, and so when referring to $O(I)$ we always assume $|I|\geq 2$. We now assume $C$ also has a symmetric monoidal product $\otimes$, not necessarily compatible with the model structure.

\begin{dfn}\label{dfn:coend}
If $f:c \rightarrow d$ is a morphism in $C$, the coendomorphism operad of $f$ for $|I|\geq 2$ is
\[\mathrm{CoEnd}_C(f)(I):= C(d,c^{\otimes I})\]
If $f= \mathrm{Id}_c$, we call this $\mathrm{CoEnd}_C(c)$.
The partial composites are given informally by: 

\begin{center}
\begin{tikzcd}
d \arrow[d, "r"]                                                                     \\
c \otimes \dots \otimes c \otimes \dots \otimes c \arrow[d, " f\circ_a r "]          \\
c \otimes \dots \otimes d \otimes \dots \otimes c \arrow[d, " s\circ_a f \circ_a r"] \\
c \otimes \dots \otimes (c \otimes \dots \otimes c) \otimes \dots \otimes c         
\end{tikzcd}
\end{center}

And more explicitly:

\begin{center}
\begin{tikzcd}
{C(d,c^{\otimes I}) \wedge C(d,c^{\otimes J})} \arrow[d] \arrow[rddd, dashed, bend left]   &                               \\
{C(d,c^{\otimes I}) \wedge C(d,c^{\otimes J}) \wedge S^0 \wedge (S^0)^{\wedge I - \{a\}}}  \arrow[d, "\mathrm{Id} \wedge \mathrm{Id} \wedge f \wedge (1_c)^{\wedge I-\{a\}} "]  &                               \\
{C(d,c^{\otimes I}) \wedge C(d,c^{\otimes J}) \wedge C(c,d) \wedge C(c,c)^{\wedge I - \{a\}}} \arrow[d] &                               \\
{C(d,c^{\otimes I}) \wedge C(c^{\otimes I},c^{\otimes I \cup_a J})} \arrow[r]              & {C(d,c^{\otimes I \cup_a J})}
\end{tikzcd}
\end{center}
where bottom-most vertical map is given by the identity smashed with the composition of the middle two function spectra followed by the smash product of functions.
\end{dfn}

\begin{prp}\label{prp:zigzag}
    For a morphism $f:c \rightarrow d$ in $C$, there are maps of operads
    \[\mathrm{CoEnd}_C(c) \leftarrow \mathrm{CoEnd}_C(f) \rightarrow \mathrm{CoEnd}_C(d)\]
    induced by precomposition with $f$ and postcomposition with $f^{\otimes n}$.
\end{prp}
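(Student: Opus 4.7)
The plan is to define each structure map on arity $I$ explicitly and verify the operadic axioms by a direct calculation. On arity $|I|\geq 2$, take
\[\mathrm{CoEnd}_C(f)(I) = C(d, c^{\otimes I}) \xrightarrow{\;-\circ f\;} C(c, c^{\otimes I}) = \mathrm{CoEnd}_C(c)(I)\]
for the first map, induced by precomposition with $f\colon c\to d$, and
\[\mathrm{CoEnd}_C(f)(I) = C(d, c^{\otimes I}) \xrightarrow{\;f^{\otimes I}\circ -\;} C(d, d^{\otimes I}) = \mathrm{CoEnd}_C(d)(I)\]
for the second, induced by postcomposition with $f^{\otimes I}$. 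Each is a bona fide map of function spectra in the $\mathrm{Spec}$-enriched category, and naturality in bijections of $I$ gives $\Sigma_I$-equivariance automatically. Thus the only nontrivial point is compatibility with partial composites.

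For that, unpack Definition \ref{dfn:coend}: the partial composite of $(r,s) \in C(d,c^{\otimes I}) \wedge C(d,c^{\otimes J})$ in $\mathrm{CoEnd}_C(f)$ is $(s \circ f)_a \circ r$, where $(-)_a$ denotes inserting a morphism at the $a$-th tensor factor. Precomposing with $f$ gives $(s\circ f)_a \circ r \circ f$, which is exactly the $\mathrm{CoEnd}_C(c)$ partial composite of $(r\circ f,\; s\circ f)$; here the lift morphism appearing in the definition specializes to $\mathrm{Id}_c$ and absorbs the factor of $f$ that was present in $\mathrm{CoEnd}_C(f)$. Postcomposing instead with $f^{\otimes I\cup_a J}$ gives $f^{\otimes I\cup_a J} \circ (s\circ f)_a \circ r$, which slot-by-slot agrees with the $\mathrm{CoEnd}_C(d)$ partial composite of $(f^{\otimes I}\circ r,\; f^{\otimes J}\circ s)$: at each $i \in I-\{a\}$ both read $f\circ r|_i$, and at each $j \in J$ within the expanded $a$-slot both read $f \circ s|_j \circ f \circ r|_a$, using the factorization $(s\circ f)|_j = s|_j \circ f$.

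The main obstacle is purely clerical: one must translate between the informal "insert at position $a$" presentation and the explicit smash product formula, and track how the lift morphism specializes (to $f$, $\mathrm{Id}_c$, and $\mathrm{Id}_d$ respectively) across the three operads involved. No homotopy-theoretic input is needed, since the statement lives entirely at the level of the enriched category.
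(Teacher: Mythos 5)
Your proof is correct and takes the same route as the paper, which simply says these maps ``follow from unwinding definitions''; you have done the unwinding explicitly. Your identification of the structure maps on arity $I$ is right, and your verification that both directions commute with partial composites is accurate.

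One small imprecision worth flagging: in the postcomposition case you phrase the check ``slot-by-slot'' using notations like $s|_j$ and $r|_i$, as though $r\colon d\to c^{\otimes I}$ and $s\colon d\to c^{\otimes J}$ decomposed as tensor products of maps $d\to c$. They do not in general, so $(s\circ f)|_j = s|_j\circ f$ is not literally meaningful. The thing you actually need is the interchange law for $\otimes$: writing $c^{\otimes I}\cong c^{\otimes(I-\{a\})}\otimes c$, the partial composite in $\mathrm{CoEnd}_C(f)$ is $\bigl(\mathrm{Id}\otimes(s\circ f)\bigr)\circ r$, and one checks
\[
\bigl(f^{\otimes(I-\{a\})}\otimes f^{\otimes J}\bigr)\circ\bigl(\mathrm{Id}\otimes(s\circ f)\bigr)
\;=\;
f^{\otimes(I-\{a\})}\otimes\bigl(f^{\otimes J}\circ s\circ f\bigr)
\;=\;
\bigl(\mathrm{Id}\otimes(f^{\otimes J}\circ s)\bigr)\circ\bigl(f^{\otimes(I-\{a\})}\otimes f\bigr),
\]
which is exactly the $\mathrm{CoEnd}_C(d)$ partial composite of $(f^{\otimes I}\circ r,\; f^{\otimes J}\circ s)$. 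So the insertion morphisms $f$, $\mathrm{Id}_c$, $\mathrm{Id}_d$ slide past the identity tensor factors by interchange, not because $r$ and $s$ split. With that rephrasing your argument is airtight and, if anything, a more useful record than the paper's one-line appeal to definitions.
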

\begin{proof}
    These follow from unwinding definitions.
\end{proof}

We say a weak equivalence $f:c \rightarrow d$ in $(C,\otimes)$ is $\otimes$-nice if $\otimes n:c^{\otimes n} \rightarrow d^{\otimes n}$ is a weak equivalence for all $n$. For instance, the standard theory of monoidal model categories implies that all weak equivalences of cofibrant spectra are $\wedge$-nice, though we don't generally require any interaction of $\otimes$ and the model structure in the coming propositions.

\begin{lem}\label{lem:nice}
    If $X$ is a CW complex, the weak equivalence $\bar{S}^0 \wedge \Sigma^\infty X \rightarrow \Sigma^\infty X$ is $\wedge$-nice.
\end{lem}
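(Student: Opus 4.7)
The plan is to use the strong monoidality of $\Sigma^\infty$ to reduce the $n$-fold smash to a single application of the hypothesized weak equivalence, and then to invoke the unit axiom of the monoidal model structure to absorb the residual factor.

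First I would rewrite both sides of the smash using the symmetric monoidal structure together with the strong monoidality isomorphism $\Sigma^\infty A \wedge \Sigma^\infty B \cong \Sigma^\infty(A \wedge B)$, identifying the $n$-fold smash of our map with
\[ (\bar{S}^0)^{\wedge n} \wedge \Sigma^\infty Y \longrightarrow \Sigma^\infty Y, \]
where $Y := X^{\wedge n}$ is again a CW complex and the map is induced by the collapse $(\bar{S}^0)^{\wedge n} \to S^0$. I then factor this through the standard cofibrant replacement of $\Sigma^\infty Y$:
\[ (\bar{S}^0)^{\wedge n} \wedge \Sigma^\infty Y \longrightarrow \bar{S}^0 \wedge \Sigma^\infty Y \longrightarrow \Sigma^\infty Y. \]
The second arrow is a weak equivalence by the hypothesis applied to the CW complex $Y$, so by two-out-of-three it suffices to show that the first arrow is a weak equivalence.

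The first arrow is obtained by smashing the map $(\bar{S}^0)^{\wedge n-1} \to S^0$, which is a weak equivalence between cofibrant replacements of $S^0$, with the cofibrant spectrum $\bar{S}^0 \wedge \Sigma^\infty Y$. This is a weak equivalence by the unit axiom of the symmetric monoidal model category of $S$-modules: for any cofibrant replacement $W \to S^0$ and any cofibrant object $Z$, the induced map $W \wedge Z \to S^0 \wedge Z \cong Z$ is a weak equivalence. Here we apply it with $W = (\bar{S}^0)^{\wedge n-1}$, which is cofibrant as an iterated smash of cofibrant objects, and $Z = \bar{S}^0 \wedge \Sigma^\infty Y$. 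I expect the main obstacle to be careful bookkeeping of the identifications: strong monoidality of $\Sigma^\infty$ is precisely what allows the non-cofibrancy to be concentrated in a single factor $\Sigma^\infty(X^{\wedge n})$ and then absorbed into the cofibrant spectrum $\bar{S}^0 \wedge \Sigma^\infty(X^{\wedge n})$, whereas a naive induction smashing one weak equivalence into place at a time would force one to smash with the non-cofibrant $\Sigma^\infty X$, where the analogous argument fails.
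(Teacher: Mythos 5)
Your proposal is correct and takes essentially the same approach as the paper: both begin by using strong monoidality of $\Sigma^\infty$ to rewrite the $n$th power as $(\bar{S}^0)^{\wedge n} \wedge \Sigma^\infty(X^{\wedge n}) \to \Sigma^\infty(X^{\wedge n})$. The paper then asserts directly that this is a cofibrant replacement of $\Sigma^\infty(X^{\wedge n})$ (since $(\bar{S}^0)^{\wedge n}$ is cofibrant and $X^{\wedge n}$ is a CW complex), while you obtain the same conclusion by factoring through $\bar{S}^0 \wedge \Sigma^\infty(X^{\wedge n})$ and invoking the unit axiom --- a slightly more explicit justification of the same final step.
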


\begin{proof}
    Since $\Sigma^\infty$ is symmetric monoidal, we may write the $n$th power of this map as
    \[(\bar{S}^0)^{\wedge n} \wedge \Sigma^\infty(X^{\wedge n}) \rightarrow \Sigma^\infty(X^{\wedge n}) \]
    which is a cofibrant replacement of $\Sigma^\infty (X^{\wedge n})$ since $(\bar{S}^0)^{\wedge n} $ is cofibrant and $X^{\wedge n}$ is a CW complex.
\end{proof}

\begin{lem}\label{lem:swap}
    If $f:c \xrightarrow{\simeq} c'$ is a $\otimes$-nice weak equivalence in $C$ such that $c$ is cofibrant and $c'$ has correct mapping spectra, then 
     \[\mathrm{CoEnd}_\mathrm{C}(c) \xleftarrow{\simeq} \mathrm{CoEnd}_\mathrm{C}(f) \xrightarrow{\simeq} \mathrm{CoEnd}_\mathrm{C}(c').\]

\end{lem}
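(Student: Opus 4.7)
The plan is to show both maps are weak equivalences arity by arity, which suffices since a map of operads is a weak equivalence precisely when it is so at each arity. Fix $I$ with $|I| \geq 2$. The two maps in question are the precomposition with $f$,
\[C(c', c^{\otimes I}) \longrightarrow C(c, c^{\otimes I}),\]
and the postcomposition with $f^{\otimes I}$,
\[C(c', c^{\otimes I}) \longrightarrow C(c', c'^{\otimes I}).\]

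First I would handle the precomposition map. By hypothesis $c'$ has the correct mapping spectra, and $f: c \xrightarrow{\simeq} c'$ is a weak equivalence out of a cofibrant object, hence is a cofibrant replacement of $c'$. Definition \ref{dfn:correct} applied with this cofibrant replacement and the target $c^{\otimes I}$ immediately gives that precomposition with $f$ is a weak equivalence, so $\mathrm{CoEnd}_C(f)(I) \to \mathrm{CoEnd}_C(c)(I)$ is a weak equivalence.

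Next I would handle the postcomposition map. Since $f$ is $\otimes$-nice, the induced map $f^{\otimes I}: c^{\otimes I} \to c'^{\otimes I}$ is a weak equivalence. Because $c'$ has the correct mapping spectra, Lemma \ref{lem:correct} applies to $f^{\otimes I}$ and shows that postcomposition gives a weak equivalence $C(c', c^{\otimes I}) \xrightarrow{\simeq} C(c', c'^{\otimes I})$, i.e.\ $\mathrm{CoEnd}_C(f)(I) \to \mathrm{CoEnd}_C(c')(I)$ is a weak equivalence. Assembling over all $I$ finishes the proof.

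There is no real obstacle here: the hypotheses on $c$ (cofibrancy), on $c'$ (correct mapping spectra), and on $f$ ($\otimes$-niceness) are precisely engineered so that each of the two standard homotopical criteria, one for each variable of $C(-,-)$, applies cleanly. The only mild bookkeeping is to note that the maps of operads exist by Proposition \ref{prp:zigzag}, so we only need to verify the arity-wise weak equivalence claim above.
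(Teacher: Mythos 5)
Your proof is correct and follows the same route as the paper: the first map is handled by invoking Definition \ref{dfn:correct} with $f$ as the cofibrant replacement of $c'$, and the second by combining $\otimes$-niceness with Lemma \ref{lem:correct}. The paper's proof is just a more compressed version of exactly this argument.
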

\begin{proof}
    The first weak equivalence follows from Definition \ref{dfn:correct}. The  second follows from Lemma \ref{lem:correct} combined with the fact $f$ is $\otimes$-nice.
\end{proof}

 Suppose $O,P$ are operads. The operad $O \otimes P$ is given by
\[(O \otimes P)(I) := O(I) \otimes P(I),\]
and the partial composites are obtained by taking smash products.
\begin{dfn}
    The $n$-sphere operad is \[S_n=\mathrm{CoEnd}_{\mathrm{Spec}}(S^n).\]
\end{dfn}
\noindent The $n$-sphere operad is used to define operadic suspension:
\[s_n O := S_n \wedge O.\]
\begin{remark}\label{remark:coalgsusp}
    There are several uses for operadic suspension. For example, if $C$ is an $O$-coalgebra, $\Sigma^n C$ is an $s_nO$-coalgebra. Dually, if $A$ is an $O$-algebra, $\Sigma^{-n}A$ is an $s_n O$-algebra, up to weak equivalence.
\end{remark}

\section{Right modules over operads} \label{section:modules}

\begin{dfn}
A right module $R$ over an operad $O$ in $(V,\otimes)$ is a symmetric sequence $R$ in $V$ with morphisms called partial composites:
\[R(I) \otimes O(J) \rightarrow R(I \cup_a J)\]
for all $a \in I$. These must satisfy straightforward equivariance and associativity conditions.

\end{dfn}

As before, $(C,\otimes)$ is a symmetric monoidal category which is also a $\mathrm{Spec}$-enriched model category.

\begin{dfn}
    Given $b \in C$ and $f:c \rightarrow d$ in $C$, the right $\mathrm{CoEnd}_C(f)$-module $\mathrm{CoEnd}^d_{C}(b,f)$ is given by
    \[\mathrm{CoEnd}^d_{C}(b,f)(I):=C(b,d^{\otimes I}).\]
    The partial composites are given by postcomposition with $\mathrm{CoEnd}_C(f)$, followed by postcomposition with $f$.

    The right $\mathrm{CoEnd}_C(f)$-module $\mathrm{CoEnd}^c_{C}(b,f)$ is given by
    \[\mathrm{CoEnd}^c_{C}(b,f)(I):=C(b,c^{\otimes I}).\]
    The partial composites are given by given by postcomposition with $f$, followed by postcomposition with $\mathrm{CoEnd}_C(f)$.

    If $f=\mathrm{Id}_c$, we denote these identical modules by $\mathrm{CoEnd}_C(b,c)$.
\end{dfn}

\begin{prp} \label{prp:moduleinvariance}
    If $a \xrightarrow{\simeq} b$ is a weak equivalence in $C$ of objects with the correct mapping spectra and $f:c \rightarrow d$ in $C$, there are weak equivalences of right modules
    \[\mathrm{CoEnd}^d_C(b,f) \xrightarrow{\simeq} \mathrm{CoEnd}^d_C(a,f),\]
    \[\mathrm{CoEnd}^c_C(b,f) \xrightarrow{\simeq} \mathrm{CoEnd}^c_C(a,f)\]
    induced by precomposition.
\end{prp}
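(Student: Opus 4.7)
The plan is to establish the two assertions in sequence: first, that precomposition with $g: a \xrightarrow{\simeq} b$ is a morphism of symmetric sequences compatible with the right $\mathrm{CoEnd}_C(f)$-module structure; second, that the induced arity-wise map of spectra is a weak equivalence.

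The first point is essentially formal. Both module structures, on $\mathrm{CoEnd}^d_C(b,f)$ and on $\mathrm{CoEnd}^c_C(b,f)$, are built purely out of postcomposition operations: either postcomposition by elements of $\mathrm{CoEnd}_C(f)$ acting on tensor powers of $c$ or $d$, or postcomposition with $f$ in a single slot. Precomposition with a fixed morphism $g$ is strictly natural with respect to any postcomposition by associativity of composition in the $\mathrm{Spec}$-enriched category $C$. Hence the maps of symmetric sequences induced by $g$ strictly intertwine the module structure maps.

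For the second point, I would work one arity $I$ at a time and check that
\[ C(b, e^{\otimes I}) \longrightarrow C(a, e^{\otimes I}) \]
is a weak equivalence, where $e$ is either $c$ or $d$. Fix a cofibrant replacement $\pi: a' \xrightarrow{\simeq} a$. Since $a$ has the correct mapping spectra (Definition \ref{dfn:correct}), precomposition with $\pi$ induces a weak equivalence $C(a, e^{\otimes I}) \xrightarrow{\simeq} C(a', e^{\otimes I})$. Since $b$ also has the correct mapping spectra and $g \circ \pi: a' \xrightarrow{\simeq} b$ is a weak equivalence with cofibrant source, precomposition with $g \circ \pi$ induces a weak equivalence $C(b, e^{\otimes I}) \xrightarrow{\simeq} C(a', e^{\otimes I})$. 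These two equivalences and the map induced by $g$ fit into a commuting triangle by contravariance of precomposition, so 2-out-of-3 yields the claim.

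No step poses a genuine obstacle; the argument reduces to unwinding the module definitions and a standard 2-out-of-3 application of the correct-mapping-spectra hypothesis on both $a$ and $b$. The only mild bookkeeping point is verifying that both module structures, which differ in the order in which the $f$-postcomposition and the operad action are applied, are preserved by precomposition with $g$; this follows uniformly because postcomposition is natural in the source variable.
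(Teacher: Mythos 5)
Your proof is correct and follows essentially the same route as the paper's (which replaces $a,b$ by cofibrant objects and invokes the enriched model category axioms); your version is slightly streamlined in that it uses a single cofibrant replacement $a'\to a$, feeds $g\circ\pi : a' \to b$ into the correct-mapping-spectra hypothesis for $b$, and closes with two-out-of-three, rather than separately replacing $b$. The check that precomposition is a map of modules is also spelled out more explicitly than in the paper, but the substance is the same.
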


\begin{proof}
    As before, to check these are weak equivalences we may replace $a,b$ by cofibrant objects for which this follows from the axioms of an enriched model category since all objects are assumed fibrant.
\end{proof}

Given operads $O,P$, a right $O$-module $Q$, and a right $P$-module R, we say a map of symmetric sequences $Q \rightarrow R$ is \textit{compatible} with a map of operads $O \rightarrow P$ if the obvious diagrams commute.

\begin{prp}\label{prp:switchcoend}
    Given $b\in C$ and $f:c \rightarrow d$ in $C$, there are compatible maps of operads \[\mathrm{CoEnd}_C(c) \leftarrow \mathrm{CoEnd}_C(f) = \mathrm{CoEnd}_C(f) \rightarrow \mathrm{CoEnd}_C(d).\]
    and maps of right modules
    \[\mathrm{CoEnd}_C(b,c) \leftarrow \mathrm{CoEnd}^c_C(b,f) \rightarrow \mathrm{CoEnd}^d_C(b,f) \rightarrow \mathrm{CoEnd}(b,d)\]
    where the first arrow is the identity on symmetric sequences, the second arrow is given by postcomposition with $f$, and the third arrow is the identity on symmetric sequences.
       
       If $b$ has the correct mapping spectra and $f:c \xrightarrow{\simeq}d $ is a $\otimes$-nice weak equivalence, then these are levelwise weak equivalences.

\end{prp}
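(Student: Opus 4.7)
The plan is to handle the operad zigzag and module zigzag separately, and for the module zigzag to construct the three maps on the level of symmetric sequences first, then verify compatibility with the given operad maps, and finally deduce the weak equivalence claim from the hypotheses on $b$ and $f$.

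First, the operad zigzag is the content of Proposition \ref{prp:zigzag}, so nothing new is needed there. For the right modules, I would note that $\mathrm{CoEnd}^c_C(b,f)$ and $\mathrm{CoEnd}_C(b,c)$ have the same underlying symmetric sequence $I \mapsto C(b,c^{\otimes I})$, and similarly $\mathrm{CoEnd}^d_C(b,f)$ and $\mathrm{CoEnd}_C(b,d)$ both have underlying sequence $I \mapsto C(b,d^{\otimes I})$; so the outer two arrows are defined to be the identity on symmetric sequences, and the middle arrow $\mathrm{CoEnd}^c_C(b,f) \to \mathrm{CoEnd}^d_C(b,f)$ is defined levelwise by postcomposition with $f^{\otimes I}$.

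Next I would verify compatibility. For the first arrow, the action of $s \in C(d,c^{\otimes J})$ on $r \in C(b,c^{\otimes I})$ in $\mathrm{CoEnd}^c_C(b,f)$ is, by definition, to first insert $f$ in the $a$-th slot of $r$ and then postcompose with $s$ in that slot; this equals the action of $s\circ f \in C(c,c^{\otimes J})$ in $\mathrm{CoEnd}_C(b,c)$, and $s\circ f$ is precisely the image of $s$ under the precomposition map $\mathrm{CoEnd}_C(f)\to \mathrm{CoEnd}_C(c)$. Symmetrically, for the third arrow, the action on $\mathrm{CoEnd}^d_C(b,f)$ postcomposes with $s$ and then with $f^{\otimes J}$, agreeing with the action of $f^{\otimes J}\circ s$ in $\mathrm{CoEnd}_C(b,d)$, which is the image of $s$ under postcomposition. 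For the middle arrow, both modules are right $\mathrm{CoEnd}_C(f)$-modules, and a slot-by-slot trace shows that whether one first composes with $s$ in slot $a$ (with $f$ bookending as prescribed) and then applies $f^{\otimes I\cup_a J}$, or first applies $f^{\otimes I}$ to $r$ and then performs the action in $\mathrm{CoEnd}^d_C(b,f)$, the outcome in each tensor factor is the same composite of $f$'s and $s$.

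Finally, for the weak equivalence claim, the first and third arrows are identities on symmetric sequences and so are trivially levelwise weak equivalences. For the middle arrow, $\otimes$-niceness of $f$ gives that $f^{\otimes I}: c^{\otimes I} \xrightarrow{\simeq} d^{\otimes I}$ is a weak equivalence for each $I$, and since $b$ has the correct mapping spectra, Lemma \ref{lem:correct} implies that the induced postcomposition map $C(b,c^{\otimes I}) \to C(b,d^{\otimes I})$ is a weak equivalence for every $I$. I do not expect any real obstacle here: the proof is essentially bookkeeping of the definitions, with the only mildly technical point being the slot-by-slot verification of compatibility for the middle arrow, and the weak equivalence claim follows immediately from Lemma \ref{lem:correct} and the $\otimes$-nice hypothesis.
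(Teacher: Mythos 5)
Your proof is correct and follows the same route as the paper: the paper's own proof simply states that compatibility is "easily checked" and that the weak-equivalence claim follows from Definition \ref{dfn:correct} and Lemma \ref{lem:correct}, which is exactly the reasoning you spell out. Your elaboration of the slot-by-slot compatibility check for the middle arrow and the observation that the outer arrows are identities on symmetric sequences are just the details the paper leaves implicit.
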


\begin{proof}
    The compatibility is easily checked and the statement about weak equivalences follows from Definition \ref{dfn:correct} and Lemma \ref{lem:correct}.
\end{proof}

The category of right modules over an operad in $(\mathrm{Spec},\wedge )$ has a projective model structure, i.e. a model structure with levelwise fibrations and weak equivalences \cite{arone_ching_2011, may_guillou,Schwede_Shipley_2003}. There is a $\mathrm{Spec}$-enrichment of this model structure \cite{arone_ching_2011,may_guillou} in terms of the $\circ$-product of symmetric sequences.

\begin{dfn}
    For $X,Y \in \mathrm{SymSeq}(\mathrm{Spec})$, 
    \[(X \circ Y) (I):= \bigvee_{k\geq 1} X(k) \wedge_{\Sigma_k} \bigvee_{U_1 \sqcup \dots \sqcup U_k = I} (Y(U_1) \wedge \dots \wedge Y(U_k)) .\]
\end{dfn}
One can think of the $\circ$-product as collecting all of the information about infinitesimal composites $I \cup_a J$ into a single product. We fix an operad $O$ in $(\mathrm{Spec},\wedge)$. If $R,R' \in \mathrm{RMod}_O$, the enrichment is of the form:
\[\mathrm{RMod}_O(R,R'):=\mathrm{lim}( \mathrm{SymSeq}(R,R') \rightrightarrows  \mathrm{SymSeq}(R \circ O,R')).\]

\noindent If $X$ is a spectrum and $R$ is a right $O$-module, we let $X \wedge R$ denote the right $O$-module,
\[(X \wedge R)(I) := X \wedge R(I).\]
Similarly, if $Q$ is a right $P$-module, we may form the right $O \wedge P$-module $R \wedge Q$,
\[(R \wedge Q) (I):= R(I) \wedge Q(I),\]
in both cases the partial composites are determined by smashing.
\begin{dfn}
    For a right $O$-module $R$, the $s_n O$-module $s_{(n,d)}R$ is given by $S^d \wedge S_n \wedge R$.
\end{dfn}

\begin{lem}\label{lem:domainsphere}
    The map $\mathrm{RMod}_O(R,R') \rightarrow \mathrm{RMod}_O(\bar{S}^0 \wedge R,R' )$ is a weak equivalence. As a consequence, if $\bar{S}^0 \wedge R$ is cofibrant, then $R$ has the correct mapping spectra.
\end{lem}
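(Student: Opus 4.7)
The plan is to analyze the map using the explicit equalizer formula for the enriched mapping spectrum in $\mathrm{RMod}_O$ and reduce the whole statement to the equivalence $Z \xrightarrow{\simeq} F(\bar{S}^0, Z)$ already established in the paper for any $S$-module $Z$. By hypothesis,
\[\mathrm{RMod}_O(R,R') = \mathrm{lim}\bigl(\mathrm{SymSeq}(R,R') \rightrightarrows \mathrm{SymSeq}(R \circ O, R')\bigr).\]
Since $\bar{S}^0$ carries trivial $\Sigma_k$-action, the formula for the composition product gives a natural isomorphism $(\bar{S}^0 \wedge R) \circ O \cong \bar{S}^0 \wedge (R \circ O)$. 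Hence precomposition with $\bar{S}^0 \wedge R \to R$ produces a compatible map of equalizer diagrams, and it suffices to prove that for any symmetric sequences $X$ and $Y$ the map $\mathrm{SymSeq}(X, Y) \to \mathrm{SymSeq}(\bar{S}^0 \wedge X, Y)$ is a weak equivalence.

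At the symmetric-sequence level, the enriched mapping spectrum is the product $\prod_n F(X(n), Y(n))^{\Sigma_n}$. Adjointness together with the fact that $\bar{S}^0$ has trivial $\Sigma_n$-action gives a natural identification
\[F(\bar{S}^0 \wedge X(n), Y(n))^{\Sigma_n} \cong F\bigl(\bar{S}^0,\, F(X(n), Y(n))^{\Sigma_n}\bigr),\]
so the map in question at level $n$ becomes the canonical map $Z \to F(\bar{S}^0, Z)$ for the $S$-module $Z = F(X(n), Y(n))^{\Sigma_n}$. As noted earlier in the paper, this map is a weak equivalence because $Z$ is fibrant and $\pi_* F(\bar{S}^0, Z) \cong [\bar{S}^*, Z] \cong \pi_* Z$. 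Taking the product over $n$ and then the equalizer preserves equivalences of fibrant $S$-modules (all $S$-modules being fibrant), which yields the first claim.

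For the consequence, suppose $\bar{S}^0 \wedge R$ is cofibrant. Then $\bar{S}^0 \wedge R \xrightarrow{\simeq} R$ is itself a cofibrant replacement, and the first part of the lemma verifies Definition \ref{dfn:correct} for this particular replacement. Since fibrations in the projective model structure on $\mathrm{RMod}_O$ are levelwise and every $S$-module is fibrant, the object $R$ is fibrant, so any two cofibrant replacements of $R$ are linked by a zig-zag of weak equivalences of cofibrant objects. The $\mathrm{Spec}$-enriched model category axioms then ensure that such weak equivalences induce weak equivalences on enriched mapping spectra out, and two-out-of-three against the specific replacement $\bar{S}^0 \wedge R \to R$ upgrades the correct-mapping-spectra property to every cofibrant replacement.

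The most delicate step is the fixed-point identification $F(\bar{S}^0, -)^{\Sigma_n} \cong F(\bar{S}^0, (-)^{\Sigma_n})$ that localizes the analysis to a single $S$-module; this depends precisely on $\bar{S}^0$ having trivial $\Sigma_n$-action. Once this identification is in hand, the entire argument is a formal consequence of the equivalence $Z \xrightarrow{\simeq} F(\bar{S}^0, Z)$ for $Z$ any $S$-module and the fact that all objects of $\mathrm{Spec}$ are fibrant.
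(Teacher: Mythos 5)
Your first paragraph correctly sets up the equalizer description and the key identifications, including $(\bar{S}^0 \wedge R)\circ O \cong \bar{S}^0 \wedge (R\circ O)$ and the level-$n$ identification $\mathrm{SymSeq}(\bar{S}^0 \wedge X, Y) \cong F(\bar{S}^0, \mathrm{SymSeq}(X,Y))$, which are exactly what the paper uses. However, the sentence ``Taking the product over $n$ and then the equalizer preserves equivalences of fibrant $S$-modules (all $S$-modules being fibrant)'' contains a genuine gap. Products over a set do preserve weak equivalences of fibrant objects, but the equalizer is a limit over the diagram $\bullet \rightrightarrows \bullet$, which is \emph{not} a homotopy limit merely because the objects are fibrant: writing the equalizer as the pullback of $A \to B\times B \leftarrow B$ (with $(f,g)$ and the diagonal), one needs the matching map $A \to B\times B$ to be a fibration for the strict limit to compute the homotopy limit, and fibrancy of $A$ and $B$ does not give this. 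So the inference from ``levelwise weak equivalence of the two parallel-arrow diagrams'' to ``weak equivalence of equalizers'' is unjustified.

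The correct way to close this is the one the paper takes, and you already have every ingredient in hand. Your identifications show, naturally in the diagram, that the codomain equalizer diagram is isomorphic to $F(\bar{S}^0, -)$ applied \emph{objectwise} to the domain equalizer diagram $\mathrm{SymSeq}(R,R') \rightrightarrows \mathrm{SymSeq}(R\circ O, R')$. Since $F(\bar{S}^0,-)$ is a right adjoint, it commutes with limits, so the map on equalizers is literally the canonical map $\mathrm{RMod}_O(R,R') \to F(\bar{S}^0, \mathrm{RMod}_O(R,R'))$, which is a weak equivalence by the $\pi_*$ computation you cite. Replacing the false general principle by this observation makes the first part correct and essentially identical to the paper's argument. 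Your second paragraph, filling in why the single cofibrant replacement $\bar{S}^0 \wedge R \to R$ implies the ``correct mapping spectra'' property for all cofibrant replacements via comparison maps and two-out-of-three, is a reasonable expansion of a step the paper leaves implicit, though to make the comparison triangles commute on the nose one should factor and lift (e.g., factor the map from the coproduct $c' \sqcup (\bar{S}^0\wedge R) \to R$) rather than invoking an unspecified zig-zag.
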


\begin{proof}
    By adjunction, the limit which computes the codomain is 
    \[\mathrm{lim}( F(\bar{S}^0,\mathrm{SymSeq}(R,R')) \rightrightarrows  F(\bar{S}^0,\mathrm{SymSeq}(R \circ O,R'))).\]
    Since $F$ commutes with limits, this is
     \[F(\bar{S}^0,\mathrm{lim}( \mathrm{SymSeq}(R,R') \rightrightarrows  \mathrm{SymSeq}(R \circ O,R'))).\]
     which is weakly equivalent to $\mathrm{RMod}_O(R,R')$.
\end{proof}

\begin{dfn}
    The tensor product of right $O$-modules $R,R'$ is the right $O$-module given by
    \[(R \otimes R')(K) := \bigvee_{K=I \sqcup J} R(I) \wedge R'(J),\]
    with partial composites determined by smashing.
\end{dfn}

The categories of right modules behave well with respect to operad morphisms \cite[Proposition 2.4]{may_guillou}:

\begin{prp}\label{prp:quillen}
    Given a map of operads $f:O \rightarrow P$, there is an enriched Quillen adjunction 
    \begin{center}
\begin{tikzcd}
\mathrm{RMod}_O \arrow[r, "\mathrm{ind}_f"', bend right] & \mathrm{RMod}_P \arrow[l, "\mathrm{res}_f"', bend right]
\end{tikzcd}
    \end{center}
    If $f$ is a weak equivalence, this is a Quillen equivalence.
\end{prp}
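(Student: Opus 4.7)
The plan is to construct the adjunction first, establish it is a Quillen adjunction via restriction, then upgrade to a Quillen equivalence using a cell induction on cofibrant right modules.

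First I would define the restriction functor $\mathrm{res}_f$ on objects by viewing a right $P$-module $R$ as a right $O$-module with partial composites $R(I) \wedge O(J) \xrightarrow{\mathrm{Id} \wedge f} R(I) \wedge P(J) \to R(I \cup_a J)$; this is obviously functorial and does not alter the underlying symmetric sequence, so it preserves levelwise fibrations and levelwise weak equivalences. Hence $\mathrm{res}_f$ is a right Quillen functor with respect to the projective model structure. The left adjoint $\mathrm{ind}_f$ is the relative composition product $R \mapsto R \circ_O P$, defined as the reflexive coequalizer of the two maps $R \circ O \circ P \rightrightarrows R \circ P$ using the right $O$-action on $R$ and the map $O \to P$ followed by the operad multiplication. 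The adjunction identity is formal from the universal property of the coequalizer, and the $\mathrm{Spec}$-enrichment of the adjunction follows because both functors are compatible with the $\circ$-product enrichment described just before the statement.

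For the Quillen equivalence when $f$ is a weak equivalence, it suffices to show that for every cofibrant $R \in \mathrm{RMod}_O$, the unit $\eta_R \colon R \to \mathrm{res}_f \mathrm{ind}_f R$ is a levelwise weak equivalence; combined with the fact that $\mathrm{res}_f$ detects and preserves weak equivalences, the 2-out-of-3 property then forces the derived counit to be a weak equivalence as well. I would establish the unit claim by the standard cell-induction argument: cofibrant right $O$-modules are retracts of transfinite compositions of pushouts of generating cofibrations, and the generating cofibrations are obtained from the free right $O$-module functor $X \mapsto X \circ O$ applied to cofibrations of symmetric sequences. On free modules the unit is the map $X \circ O \to X \circ P$ built levelwise out of smash powers of $f$ smashed with cofibrant $X(k)$'s, and this is a weak equivalence because $f$ is and smashing cofibrant spectra preserves weak equivalences. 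One then runs the induction using that pushouts along cofibrations and transfinite compositions of cofibrations preserve levelwise weak equivalences in $S$-modules.

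The main obstacle is the cell-induction step, specifically controlling how the relative composition product $- \circ_O P$ interacts with pushouts of free-module cell attachments. The difficulty is that $\circ$ is not symmetric, and the $k$-fold smash powers appearing inside $\circ$ couple multiple cells together, so one must verify that each attaching pushout produces a levelwise weak equivalence after applying $\mathrm{ind}_f$. This is handled by the usual filtration of the pushout that exhibits each stage as a cofibration-pushout of terms built from $f$ smashed with cofibrant spectra, but writing it out cleanly is the technically delicate step; I would simply cite the corresponding bookkeeping from \cite{may_guillou, Schwede_Shipley_2003, arone_ching_2011} rather than reproduce it.
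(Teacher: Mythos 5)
The paper gives no proof of this proposition: it is stated with a citation to \cite[Proposition~2.4]{may_guillou} and nothing more, so there is no argument in the text to compare against. Your sketch is the canonical route one would find spelled out in that reference and the others you name: $\mathrm{res}_f$ acts as the identity on underlying symmetric sequences, hence preserves levelwise fibrations and weak equivalences and is right Quillen; $\mathrm{ind}_f$ is the relative composition product $-\circ_O P$ presented as a reflexive coequalizer; and the Quillen equivalence is reduced, via the preservation and reflection of weak equivalences by $\mathrm{res}_f$ (and the fibrancy of all $S$-modules), to checking the unit on cofibrant objects, with the base case being the map of free modules $X\circ O \to X\circ P$ and the rest a cell induction. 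That is the right architecture, and you are candid about where the bookkeeping lives.

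One point worth tightening before you wave at the references: in the free-module base case, the levelwise map $(X\circ O)(I)\to (X\circ P)(I)$ involves not only the cofibrant $\Sigma_k$-free entries $X(k)$ but also the smash products $O(U_1)\wedge\cdots\wedge O(U_k)\to P(U_1)\wedge\cdots\wedge P(U_k)$, and these are weak equivalences only under a cofibrancy or flatness hypothesis on the operad entries themselves (not just on $X$). Your phrasing attributes the equivalence entirely to ``smashing cofibrant spectra preserves weak equivalences,'' which covers the $X(k)\wedge_{\Sigma_k}-$ part but not, as written, the inner smash powers of $f$. The cited references handle this by working with operads whose entries are cofibrant (or otherwise homotopically flat), and that hypothesis should be made explicit rather than left implicit in the citation. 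With that caveat noted, the proof strategy is sound and matches the standard treatment.
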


When convenient, we will write $\mathrm{ind}_f(-)$ as $\mathrm{ind}_O^P(-)$, that is the induction up to $P$, and $\mathrm{res}_f$ as $\mathrm{res}^P_O$, that is the restriction down from $P$. A convenient model of induction is the relative $\circ$-product $R \circ_O P$ of the right $O$ module $R$ with the left $O$ bimodule $P$, though we will not make use of any particular model.

A computation with the Yoneda lemma shows

\begin{lem}\label{lem:symmon}
    If $O \rightarrow P$ is a map of operads, there is a natural isomorphism of right $P$-modules
    \[\mathrm{ind}^P_O(R \otimes R') \cong \mathrm{ind}^P_O R \otimes \mathrm{ind}^P_O R'.\]
\end{lem}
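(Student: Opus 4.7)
The plan is a Yoneda-style argument, with a reduction to free right modules at the end.

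First, the natural comparison map: the restriction functor $\mathrm{res}^P_O$ is strong symmetric monoidal, since $\otimes$ on right modules is defined levelwise via wedges and smashes without reference to the operad action, so restriction along $f: O \to P$ commutes strictly with $\otimes$. Because $\mathrm{ind}^P_O$ is left adjoint to a strong monoidal functor, it inherits a canonical oplax symmetric monoidal structure, yielding the natural comparison map
\[\mathrm{ind}^P_O(R \otimes R') \to \mathrm{ind}^P_O R \otimes \mathrm{ind}^P_O R'.\]

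To show this map is an isomorphism, I use that both sides preserve colimits in each of $R, R'$ separately: induction is a left adjoint, and $\otimes$ is defined as a wedge. It therefore suffices to verify the isomorphism on the free right $O$-modules $R = 1_I \circ O$, $R' = 1_J \circ O$, where $1_I$ denotes the symmetric sequence concentrated at level $I$ with value $S^0$. For these, $\mathrm{ind}^P_O(1_I \circ O) \cong 1_I \circ P$ (free-to-free), and both $\mathrm{ind}^P_O((1_I \circ O) \otimes (1_J \circ O))$ and $(1_I \circ P) \otimes (1_J \circ P)$ unwind, via the defining formulas for $\circ$ and $\otimes$, to the free right $P$-module $1_{I \sqcup J} \circ P$ at level $I \sqcup J$. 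A final Yoneda check---mapping both sides into an arbitrary right $P$-module $Q$ and using $\mathrm{RMod}_P(1_K \circ P, Q) \cong Q(K)$---confirms that the abstract comparison map constructed in the first paragraph agrees with this hand-built identification.

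The main obstacle is the bookkeeping in the reduction step: identifying $(1_I \circ P) \otimes (1_J \circ P)$ with $1_{I \sqcup J} \circ P$ requires careful manipulation of the partitions and $\Sigma$-actions that appear in the $\circ$-product and in the $\otimes$-wedge. This is mechanical but is precisely what the paper's terse ``a computation with the Yoneda lemma'' hides. Once this identification is in hand, the lemma follows by colimit extension in $R$ and $R'$.
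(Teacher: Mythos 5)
Your argument is correct and is a reasonable unpacking of the paper's unexplained ``computation with the Yoneda lemma'': the reduction to free right modules $1_I \circ O$ via colimit preservation in each variable, together with the Yoneda identification of $(1_I \circ P)\otimes(1_J\circ P)$ with $1_{I\sqcup J}\circ P$, is exactly the standard way to organize that computation. One small imprecision worth flagging: the tensor product $\otimes$ on right $O$-modules does reference the operad action (the partial composite on $R\otimes R'$ uses the $O$-action on whichever tensor factor contains the distinguished element), but since each partial composite acts through only a single factor at a time, restriction along $O\to P$ still strictly commutes with $\otimes$, so your conclusion that $\mathrm{res}^P_O$ is strong symmetric monoidal --- and hence that $\mathrm{ind}^P_O$ is canonically oplax monoidal --- stands.
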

\noindent Let $1$ denote the initial and terminal reduced operad in spectra which has its first spectrum $S^0$ and all other spectra $\ast$. It is easy to see \[\mathrm{RMod}_1\cong\mathrm{SymSeq}(\mathrm{Spec}).\]

\begin{dfn}
    Given $X\in\mathrm{SymSeq}(\mathrm{Spec})$, \[\mathrm{Free}_O(X):= \mathrm{ind}_1^O X,\]
 \[\mathrm{Triv}_O(X):= \mathrm{res}_O^1 X.\]
\end{dfn}
\begin{dfn}
     The indecomposables of an $O$-module $R$ are 
    \[\mathrm{Indecom}(R):= \mathrm{ind}_O^1 (R).\]
\end{dfn}

\begin{lem}\label{lem:indsmash}
    For a spectrum $X$, there is an isomorphism $\mathrm{Indecom}(X \wedge R) \cong X \wedge \mathrm{Indecom}(R) $.
\end{lem}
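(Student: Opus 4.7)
The cleanest approach is via the enriched Yoneda lemma. My plan is to show that both sides represent the same $\mathrm{Spec}$-valued functor on the category $\mathrm{SymSeq}(\mathrm{Spec}) = \mathrm{RMod}_1$.

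First, observe that the formula $(X \wedge R)(I) := X \wedge R(I)$ exhibits $X \wedge R$ as the $\mathrm{Spec}$-tensoring of the right $O$-module $R$ by the spectrum $X$ in the $\mathrm{Spec}$-enriched category $\mathrm{RMod}_O$. In particular, for any right $O$-module $R'$, there is a natural isomorphism
\[\mathrm{RMod}_O(X \wedge R, R') \cong F(X, \mathrm{RMod}_O(R, R')),\]
and similarly for the symmetric sequence smashing on $\mathrm{SymSeq}(\mathrm{Spec})$.

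Next, for any symmetric sequence $Y \in \mathrm{SymSeq}(\mathrm{Spec}) = \mathrm{RMod}_1$, I would compute a chain of natural isomorphisms
\begin{align*}
\mathrm{SymSeq}(\mathrm{Indecom}(X \wedge R), Y)
&\cong \mathrm{RMod}_O(X \wedge R, \mathrm{Triv}_O Y) \\
&\cong F(X, \mathrm{RMod}_O(R, \mathrm{Triv}_O Y)) \\
&\cong F(X, \mathrm{SymSeq}(\mathrm{Indecom}(R), Y)) \\
&\cong \mathrm{SymSeq}(X \wedge \mathrm{Indecom}(R), Y),
\end{align*}
where the first and third isomorphisms come from the $\mathrm{Triv}_O \dashv \mathrm{Indecom}$ adjunction of Proposition \ref{prp:quillen} (applied to the unit map $1 \to O$), and the second and fourth come from the $\mathrm{Spec}$-tensoring formula above. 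By the Yoneda lemma, we conclude $\mathrm{Indecom}(X \wedge R) \cong X \wedge \mathrm{Indecom}(R)$ as symmetric sequences.

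The only possibly nontrivial point is verifying that $\mathrm{Triv}_O$ commutes with the smash product with a spectrum on the nose, so that $\mathrm{Triv}_O Y$ really does receive a map out of $X \wedge R$ corresponding to a map $X \to \mathrm{RMod}_O(R, \mathrm{Triv}_O Y)$; but this is immediate from the definition $(\mathrm{Triv}_O Y)(I) = Y(I)$ and the levelwise definition of $X \wedge (-)$. Alternatively, a more direct proof is available: if one models $\mathrm{ind}_O^1$ by the relative composition product $R \circ_O 1$, then the evident distributivity $(X \wedge R) \circ Y \cong X \wedge (R \circ Y)$ passes through the defining coequalizer to give the result. I would sketch only the Yoneda argument, as it avoids choosing a particular model of induction.
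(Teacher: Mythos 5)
Your Yoneda argument is correct, but it takes a genuinely different route from the paper, which argues directly: indecomposables are computed as the cofiber (coequalizer) along all partial composites, i.e.\ as the coequalizer of $R\circ O \rightrightarrows R$ defining $R\circ_O 1$, and $X\wedge(-)$ preserves this coequalizer while satisfying $X\wedge(R\circ O)\cong (X\wedge R)\circ O$. That is precisely the ``more direct proof'' you sketch in your final sentence, so you have in fact identified the paper's argument as your alternative. Your enriched-Yoneda approach has the virtue of being model-independent — it needs only that $X\wedge(-)$ is the $\mathrm{Spec}$-tensoring on $\mathrm{RMod}_O$ and $\mathrm{SymSeq}$ and that induction/restriction is a $\mathrm{Spec}$-enriched adjunction (both of which the paper supplies, in the description of the enrichment and in Proposition \ref{prp:quillen}) — at the cost of being a bit longer than the one-line cofiber argument. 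One small notational correction: the adjunction you invoke runs the other way; it is $\mathrm{Indecom}\dashv\mathrm{Triv}_O$ (induction is left adjoint to restriction), not $\mathrm{Triv}_O\dashv\mathrm{Indecom}$. Your chain of isomorphisms uses the correct direction — mapping \emph{out of} $\mathrm{Indecom}$ matches mapping \emph{into} $\mathrm{Triv}_O$ — so this is only a slip in how you named the adjunction, not an error in the argument.
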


\begin{proof}
   The indecomposables may be computed as a cofiber along all the partial composites. Since smashing commutes with cofibers, the result follows.
\end{proof}




Since induction is left Quillen, we have:

\begin{lem}\label{lem:induction}
    If $R$ is a cofibrant $O$-module, $\mathrm{ind}_O^P(R)$ is a cofibrant $P$-module.  In particular, if $P=1$, we see $\mathrm{Indecom}(R)$ is a cofibrant symmetric sequence.
\end{lem}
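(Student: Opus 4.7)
The plan is to reduce the lemma to a standard fact about Quillen adjunctions. By Proposition \ref{prp:quillen}, the induction functor $\mathrm{ind}_f: \mathrm{RMod}_O \to \mathrm{RMod}_P$ is the left adjoint in a Quillen adjunction. Any left Quillen functor preserves cofibrations, and since a left adjoint also preserves the initial object, it must send cofibrant objects to cofibrant objects: if $\emptyset \to R$ is a cofibration, then $\mathrm{ind}_f(\emptyset) \to \mathrm{ind}_f(R)$ is a cofibration, and the domain is the initial object of $\mathrm{RMod}_P$. This is the entirety of the first claim.

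For the second claim, specialize to $P = 1$. Then by definition $\mathrm{ind}^1_O(R) = \mathrm{Indecom}(R)$, and the identification $\mathrm{RMod}_1 \cong \mathrm{SymSeq}(\mathrm{Spec})$ recorded just before the definition of $\mathrm{Indecom}$ lets us reinterpret cofibrancy in $\mathrm{RMod}_1$ as cofibrancy in the projective model structure on symmetric sequences. Applying the first claim finishes the proof.

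There is no genuine obstacle here; the only thing one might want to double-check is that the identification $\mathrm{RMod}_1 \cong \mathrm{SymSeq}(\mathrm{Spec})$ really is an isomorphism of model categories (not just of underlying categories), but this is immediate since both sides have the same levelwise cofibrations, fibrations, and weak equivalences, and the Quillen adjunction of Proposition \ref{prp:quillen} is defined with respect to these. Thus the proof is essentially one sentence: \emph{left Quillen functors preserve cofibrant objects}.
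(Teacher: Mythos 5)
Your proof is correct and matches the paper's reasoning exactly: the paper prefaces the lemma with the remark ``Since induction is left Quillen, we have:'' and offers no further argument, so your one-line observation that left Quillen functors preserve cofibrant objects is precisely the intended proof.
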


\begin{prp} \label{prp:quillenadj}
    If $O$ is an operad in $(\mathrm{Top}_\ast,\wedge)$, then there is a Quillen adjunction given by levelwise application of the indicated functors
    \begin{center}
\begin{tikzcd}
\mathrm{RMod}_O \arrow[r, "\bar{S}^n \wedge \Sigma^\infty(-)"', bend right=49] & \mathrm{RMod}_{\Sigma^\infty O} \arrow[l, "{\mathrm{Map}(\bar{S}^n,-)}"', bend right]
\end{tikzcd}
\end{center}
\end{prp}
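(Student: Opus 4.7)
The plan is to factor the displayed adjunction as the composite of two standard Quillen adjunctions applied levelwise: first the suspension spectrum adjunction $\Sigma^\infty \dashv \Omega^\infty$ between pointed spaces and $S$-modules, and then the smash/function spectrum adjunction $\bar{S}^n \wedge - \dashv F(\bar{S}^n, -)$ on $S$-modules. Under this factorization, the right adjoint is identified as $\mathrm{Map}(\bar{S}^n, -) = \Omega^\infty F(\bar{S}^n, -)$.

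The first task is to verify that each piece makes sense on the level of right modules. Since $\Sigma^\infty$ is strong symmetric monoidal, applying it levelwise sends the operad $O$ to the operad $\Sigma^\infty O$ and any right $O$-module $R$ to a right $\Sigma^\infty O$-module $\Sigma^\infty R$. The corresponding right adjoint uses the lax symmetric monoidality of $\Omega^\infty$: given a right $\Sigma^\infty O$-module $R'$, the structure maps making $\Omega^\infty R'$ a right $O$-module are obtained as
\[\Omega^\infty R'(I) \wedge O(J) \to \Omega^\infty R'(I) \wedge \Omega^\infty \Sigma^\infty O(J) \to \Omega^\infty\bigl(R'(I) \wedge \Sigma^\infty O(J)\bigr) \to \Omega^\infty R'(I \cup_a J),\]
via the unit of the adjunction, the lax monoidal constraint, and the given $\Sigma^\infty O$-module structure. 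Smashing with $\bar{S}^n$ and taking $F(\bar{S}^n,-)$ both preserve right $\Sigma^\infty O$-module structure, since the partial composites involve only smashing with the operad in a fixed factor.

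For the Quillen property I appeal to the fact that the model structure on right modules over any operad (in either pointed spaces or $S$-modules) is levelwise. It therefore suffices that $\Sigma^\infty \dashv \Omega^\infty$ is a Quillen pair between $\mathrm{Top}_\ast$ and $\mathrm{Spec}$, which is standard, and that smashing with the cofibrant spectrum $\bar{S}^n$ is left Quillen on $\mathrm{Spec}$, which follows from the pushout-product axiom for $S$-modules. Composing these two Quillen adjunctions yields the proposition. The main obstacle is the bookkeeping to confirm that the structure maps above actually satisfy the equivariance and associativity axioms of a right $O$-module; this is a routine but nontrivial consequence of coherence for lax symmetric monoidal functors together with the compatibility of the unit of $\Sigma^\infty \dashv \Omega^\infty$ with the monoidal structure.
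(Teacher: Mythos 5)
Your proposal follows essentially the same route as the paper: it factors the adjunction through $\mathrm{RMod}_{\Sigma^\infty O}$ via the levelwise $\Sigma^\infty \dashv \Omega^\infty$ adjunction and the $\bar{S}^n\wedge- \dashv F(\bar{S}^n,-)$ adjunction, and identifies the key point as checking that the right adjoint preserves right-module structure, which both arguments resolve via the (lax/enriched) compatibility of $F(\bar{S}^n,-)$ with smashing. The paper is just a bit more explicit about the last point, spelling out the spectral enrichment of $F(\bar{S}^n,-)$ as the composite $F(X,Y)\cong S^0\wedge F(X,Y)\to F(F(\bar{S}^n,S^0),F(\bar{S}^n,S^0))\wedge F(X,Y)\to F(F(\bar{S}^n,X),F(\bar{S}^n,Y))$, where you appeal to the natural map $F(A,B)\wedge C\to F(A,B\wedge C)$ more informally.
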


\begin{proof}
    It suffices to show that for an operad $P$ in $(\mathrm{Spec},\wedge)$ there is a Quillen adjunction of the form
    \begin{center}
\begin{tikzcd}
\mathrm{RMod}_P \arrow[r, "\bar{S}^n \wedge -"', bend right] & \mathrm{RMod}_P \arrow[l, "{F(\bar{S}^n,-)}"', bend right]
\end{tikzcd}
\end{center}
since the desired adjunction is obtained by taking $P=\Sigma^\infty O$ and composing with the $\Sigma^\infty$-$\Omega^\infty$ adjunction. Since $F(\bar{S}^n,-)$ preserves fibrations and acyclic fibrations as $\mathrm{Spec}$ is a $\mathrm{Spec}$-enriched model category, the main question is whether the levelwise application of $\mathrm{F}(\bar{S}^n,-)$ is still a right $P$-module. This is implied by the statement that $\mathrm{F}(\bar{S}^n,-):\mathrm{Spec} \rightarrow \mathrm{Spec}$ is spectrally enriched. Indeed, it has a canonical enrichment
\[F(X,Y) \cong S^0 \wedge F(X,Y) \rightarrow F(F(\bar{S}^n,S^0),F(\bar{S}^n,S^0)) \wedge F(X,Y) \rightarrow F(F(\bar{S}^n,X),F(\bar{S}^n,Y)).\]
It is then easily verified that $\mathrm{Map}_{\mathrm{RMod}_P}(\bar{S}^n \wedge R,R') \cong \mathrm{Map}_{\mathrm{RMod}_P}(R,F(\bar{S}^n,R'))$.
\end{proof}

\begin{cor}\label{cor:cofibrantsusp}
    If $P$ is an operad in $(\mathrm{Top}_*,\wedge)$ and $R$ is a cofibrant  right $P$-module, then $\bar{S}^n \wedge \Sigma^\infty R$ is a cofibrant $\Sigma^\infty P$ module. Similarly, if $O$ is an operad in $(\mathrm{Spec},\wedge )$ and $R$ is a cofibrant right $O$-module, then $\bar{S}^n \wedge R$ is a cofibrant right $O$-module.
\end{cor}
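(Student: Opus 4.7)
The plan is to observe that this corollary is essentially a formal consequence of Proposition \ref{prp:quillenadj}. Recall that a general fact about Quillen adjunctions is that the left adjoint preserves cofibrant objects: this is because it preserves cofibrations (by definition) and it preserves the initial object (being a left adjoint), and an object is cofibrant precisely when the unique map from the initial object is a cofibration.

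For the second claim, Proposition \ref{prp:quillenadj} shows that $\bar{S}^n \wedge (-) : \mathrm{RMod}_O \to \mathrm{RMod}_O$ is a left Quillen functor, so if $R$ is cofibrant then $\bar{S}^n \wedge R$ is cofibrant. For the first claim, Proposition \ref{prp:quillenadj} gives a Quillen adjunction
\[\bar{S}^n \wedge \Sigma^\infty(-) : \mathrm{RMod}_P \rightleftarrows \mathrm{RMod}_{\Sigma^\infty P} : \mathrm{Map}(\bar{S}^n,-),\]
so if $R$ is a cofibrant right $P$-module then $\bar{S}^n \wedge \Sigma^\infty R$ is a cofibrant right $\Sigma^\infty P$-module.

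Since no genuinely new argument is needed beyond the preceding proposition, there is no real obstacle; the only care required is to make sure the two Quillen adjunctions from Proposition \ref{prp:quillenadj} are being invoked in the correct direction (the one whose left adjoint is $\bar{S}^n \wedge \Sigma^\infty(-)$ respectively $\bar{S}^n \wedge (-)$), which is the way they are stated.
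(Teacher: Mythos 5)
Your argument is correct and is exactly the reasoning the paper intends: the corollary is stated without proof immediately after Proposition~\ref{prp:quillenadj}, and the left adjoints $\bar{S}^n \wedge \Sigma^\infty(-)$ and $\bar{S}^n \wedge (-)$ from that proposition (the latter appearing in its proof) preserve cofibrant objects for the standard reason you give. Nothing further is needed.
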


We now study the mapping spectra between right $O$-modules. For an $O$-module $R$, let $R^{\leq i}$ denote the $O$-module:
\[R^{\leq i}(J) := R(J) \quad \mathrm{if}\quad |J| \leq i,\]
\[R^{\leq i}(J) := * \quad \mathrm{if}\quad |J| > i.\]

By inspection, there is a description of $\mathrm{RMod}_O(R,R')$ as a strict inverse limit of the following tower with the given fibers:
\begin{center}
\begin{tikzcd}[row sep=large, column sep = small]
                                                                & \vdots \arrow[d]                                         \\
{F^{\Sigma_i}(\mathrm{Indecom}(R)(i), R'(i))} \arrow[r]         & {\mathrm{RMod}_O(R^{\leq i},R'^{\leq i})} \arrow[d]     \\
{F^{\Sigma_{i-1}}(\mathrm{Indecom}(R)(i-1), R'(i-1))} \arrow[r] & {\mathrm{RMod}_O(R^{\leq i-1},R'^{\leq i-1})} \arrow[d] \\
                                                                & \vdots \arrow[d]                                         \\
                                                                & {\mathrm{RMod_O}(R^{\leq 1},R'^{\leq 1})}              
\end{tikzcd}
\end{center}

\begin{prp} \label{prp:tower}
    If $R$ or $\bar{S}^0 \wedge R$ is cofibrant, the above tower is a tower of fibrations. As such, the inverse limit is a homotopy inverse limit and the fibers are homotopy fibers.

\end{prp}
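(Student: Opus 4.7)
The plan is to establish that the tower is a tower of fibrations when $R$ is cofibrant by applying the pushout--product axiom to a cofibration obtained from the truncation filtration of $R$, and then to reduce the case where $\bar{S}^0 \wedge R$ is cofibrant to the $R$ cofibrant case via Lemma \ref{lem:domainsphere}.

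First I would prove the key input: if $R$ is cofibrant in $\mathrm{RMod}_O$, then the inclusion $R^{\leq i-1} \hookrightarrow R^{\leq i}$ is a cofibration of $O$-modules. This rests on the cellular structure of the projective model category. A cofibrant $O$-module is a retract of a transfinite composition of pushouts of free cells $\mathrm{Free}_O(X)$ with $X$ a generating cofibration of symmetric sequences, concentrated in a single degree $n$. Because $O$ is reduced and the $\circ$-product only involves partitions into nonempty subsets, $\mathrm{Free}_O(X)$ vanishes below degree $n$; thus cells attached in degree $n$ leave the truncation at levels strictly below $n$ unchanged. This lets one identify $R^{\leq i}$ as the pushout of $R^{\leq i-1}$ along the attachment of the level-$i$ cells of $R$, establishing the cofibration.

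Granted this, SM7 applied to $R^{\leq i-1}\hookrightarrow R^{\leq i}$ with the fibrant target $R'$ (every $S$-module is fibrant) yields that the restriction map $\mathrm{RMod}_O(R^{\leq i}, R') \to \mathrm{RMod}_O(R^{\leq i-1}, R')$ is a fibration of spectra. Because a map out of an $i$-truncated module cannot detect target data in degrees above $i$, this coincides with the stated tower. The fiber is then computed directly: it is the space of $\Sigma_i$-equivariant maps $R(i)\to R'(i)$ annihilating all partial composites entering degree $i$ from lower levels, and the universal property of $\mathrm{Indecom}(R)(i)$ as the cofiber of the total partial composite into degree $i$ identifies this with $F^{\Sigma_i}(\mathrm{Indecom}(R)(i), R'(i))$.

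For the case when $\bar{S}^0 \wedge R$ is cofibrant, I would apply the cofibrant case to $\bar{S}^0 \wedge R$. Smashing with $\bar{S}^0$ commutes with truncation, so $(\bar{S}^0\wedge R)^{\leq i} = \bar{S}^0 \wedge R^{\leq i}$; by the adjunction argument used in Lemma \ref{lem:domainsphere}, the resulting tower of fibrations is obtained by applying $F(\bar{S}^0,-)$ termwise to the tower for $R$, which is therefore termwise weakly equivalent to the $R$-tower via the equivalence $F(\bar{S}^0, -) \xrightarrow{\simeq} \mathrm{id}$ on fibrant spectra. This termwise equivalence with a tower of fibrations suffices to make the inverse limit of the $R$-tower a homotopy inverse limit with the stated homotopy fibers. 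The main obstacle is the cellularity assertion of the first step: verifying that the truncation filtration of a cofibrant $O$-module is itself a cellular filtration relies on the reducedness of $O$ and the explicit form of the generating cofibrations of the projective model structure.
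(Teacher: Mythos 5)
Your proposal takes a genuinely different route from the paper's proof. You work on the \emph{domain} side of the mapping spectrum: you aim to show that the truncation inclusion $R^{\leq i-1}\hookrightarrow R^{\leq i}$ is a cofibration of $O$-modules and then invoke SM7 against the fibrant target $R'$, together with the observation that $\mathrm{RMod}_O(R^{\leq i},R')=\mathrm{RMod}_O(R^{\leq i},R'^{\leq i})$. The paper instead works on the \emph{codomain} side: it observes that $R'^{\leq i}\to R'^{\leq i-1}$ is automatically a fibration in the projective model structure (fibrations there are levelwise and every $S$-module is fibrant), so SM7 with $R$ cofibrant immediately gives that $\mathrm{RMod}_O(R,R'^{\leq i})\to\mathrm{RMod}_O(R,R'^{\leq i-1})$ is a fibration, and then uses the dual identification $\mathrm{RMod}_O(R,R'^{\leq i})=\mathrm{RMod}_O(R^{\leq i},R'^{\leq i})$ to rewrite this as the tower. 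The paper's route is substantially more economical: the required fibration is automatic, whereas your route requires proving a nontrivial cofibrancy statement.

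That cofibrancy statement is where your argument has a real gap. You assert that ``$R^{\leq i}$ is the pushout of $R^{\leq i-1}$ along the attachment of the level-$i$ cells of $R$,'' but this is not literally true and deserves more care. Cells attached in degree $j<i$ are of the form $\mathrm{Free}_O(X_j)\to\mathrm{Free}_O(Y_j)$ with $X_j,Y_j$ concentrated in degree $j$; the free module $\mathrm{Free}_O(X_j)=X_j\circ O$ is nonzero in every degree $\geq j$, including degree $i$. Thus passing from $R^{\leq i-1}$ to $R^{\leq i}$ is not a matter of adjoining only the degree-$i$ cells; one must also account for the degree-$i$ portions of all the lower-degree free cells, and truncations of free $O$-modules are not themselves free $O$-modules. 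The claim that $R^{\leq i-1}\to R^{\leq i}$ is a cofibration for cofibrant $R$ may well be true, but establishing it requires a more delicate argument than the one you sketch, and the whole difficulty evaporates if you instead use the paper's observation that $R'^{\leq i}\to R'^{\leq i-1}$ is already a fibration for free.

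A smaller point on the last paragraph: the towers for $R$ and for $\bar{S}^0\wedge R$ are related by the isomorphism $\mathrm{RMod}_O(\bar{S}^0\wedge R,R'^{\leq i})\cong F(\bar{S}^0,\mathrm{RMod}_O(R,R'^{\leq i}))$ from the proof of Lemma \ref{lem:domainsphere}, so your termwise-equivalence argument for the homotopical conclusions is sound, and this matches the reduction the paper performs in its first sentence.
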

\begin{proof}
    By Lemma \ref{lem:domainsphere}, we reduce to the case $R$ is cofibrant. The map $R'^{\leq i} \rightarrow R'^{\leq i-1}$ is a fibration since all $S$-modules are fibrant, so since $R$ is cofibrant the axioms of an enriched model category imply that
    \[\mathrm{RMod}_O(R, R'^{\leq i}) \rightarrow \mathrm{RMod}_O(R, R'^{\leq i-1})\]
    is a fibration. The result follows from observing $\mathrm{RMod}_O(R, R'^{\leq i})= \mathrm{RMod}_O(R^{\leq i}, R'^{\leq i})$.
\end{proof}

\begin{cor}\label{cor:indequiv}
If $f:O \xrightarrow{\simeq} P$ is a weak equivalence of operads and $R$ is a cofibrant $O$-module, then the underlying symmetric sequence of $\mathrm{ind}_f(R)$ is weakly equivalent to $R$. Additionally, if $R'$ is cofibrant the map 
\[\mathrm{RMod}_O(R,R') \rightarrow \mathrm{RMod}_P(\mathrm{ind}_f(R),\mathrm{ind}_f(R)')\]
 is a weak equivalence.
\end{cor}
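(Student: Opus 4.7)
The plan is to deduce both claims from the Quillen equivalence $(\mathrm{ind}_f, \mathrm{res}_f)$ supplied by Proposition \ref{prp:quillen}, exploiting the crucial observation that every right $O$-module is already fibrant in the projective model structure. Indeed, fibrations are levelwise and every $S$-module is fibrant. Consequently, for any cofibrant $R$, the ordinary unit $\eta_R \colon R \to \mathrm{res}_f \mathrm{ind}_f R$ already computes the derived unit of the Quillen equivalence, hence is a weak equivalence of right $O$-modules.

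For the first statement, I would then note that the restriction functor $\mathrm{res}_f$ does not alter the underlying symmetric sequence. Therefore the unit $\eta_R$ is simultaneously a morphism of symmetric sequences $R \to \mathrm{ind}_f R$, and the previous paragraph promotes it to the desired weak equivalence of symmetric sequences.

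For the second statement, the plan is to apply the enriched adjunction isomorphism
\[\mathrm{RMod}_P(\mathrm{ind}_f R, \mathrm{ind}_f R') \cong \mathrm{RMod}_O(R, \mathrm{res}_f \mathrm{ind}_f R')\]
and identify the comparison map with postcomposition by $\eta_{R'}$. Since $R'$ is cofibrant, the first part shows $\eta_{R'}$ is a weak equivalence between right $O$-modules that are both fibrant. Because $R$ is cofibrant and all modules are fibrant, the axioms of an enriched model category guarantee that $\mathrm{RMod}_O(R,-)$ preserves this weak equivalence, yielding the claim.

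The main potential obstacle is the subtle bookkeeping required to justify that the genuine and derived units coincide and that $\mathrm{RMod}_O(R,-)$ preserves the equivalence $\eta_{R'}$; both points, however, reduce to the universal fibrancy of right $O$-modules in the category of $S$-modules, together with the enriched model category axioms already used in Lemma \ref{lem:correct} and Proposition \ref{prp:tower}. If one wished for a more explicit argument, one could instead compare the towers of Proposition \ref{prp:tower} for $(R,R')$ and $(\mathrm{ind}_f R, \mathrm{ind}_f R')$ levelwise, using the isomorphism $\mathrm{Indecom}(\mathrm{ind}_f R) \cong \mathrm{Indecom}(R)$ and the first part applied to $R'$ to match their fibers, but the adjunction argument above is cleaner.
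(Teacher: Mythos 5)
Your proof is correct, and for the second assertion it takes a genuinely different (and cleaner) route than the paper. The paper, like you, derives the first claim directly from the Quillen equivalence of Proposition \ref{prp:quillen}, but it proves the second claim by an explicit comparison of the homotopy limit towers of Proposition \ref{prp:tower}, matching the fibers via the isomorphism $\mathrm{Indecom}(R) \cong \mathrm{Indecom}(\mathrm{ind}_f R)$ and the weak equivalence of the unit $R' \to \mathrm{res}_f \mathrm{ind}_f R'$. Your argument instead invokes the enriched adjunction isomorphism $\mathrm{RMod}_P(\mathrm{ind}_f R, \mathrm{ind}_f R') \cong \mathrm{RMod}_O(R, \mathrm{res}_f \mathrm{ind}_f R')$, identifies the comparison map with postcomposition by the unit $\eta_{R'}$ (a standard fact about enriched adjunctions), and concludes via the enriched model category axioms because $R$ is cofibrant and all objects are fibrant. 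This bypasses the tower analysis entirely and is shorter; the paper's tower argument is more concrete and carries explicit information about the fibers, which is in keeping with the paper's style of using that tower elsewhere. You even anticipate the paper's approach in your final remark. The one point worth making explicit, which you gesture at, is that the derived unit coincides with the ordinary unit precisely because every right $P$-module is fibrant (fibrations are levelwise and all $S$-modules are fibrant), so no fibrant replacement of $\mathrm{ind}_f R'$ is needed; with this spelled out, the argument is complete.
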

\begin{proof}
    The first fact follows from Proposition \ref{prp:quillen}. We will prove the second fact by comparing homotopy limit towers; these towers are homotopy limit towers due to Lemma \ref{lem:induction} and Proposition \ref{prp:tower}. The map on layers is
    \[F^{\Sigma_i}(\mathrm{Indecom}(R)(i), R'(i)) \rightarrow F^{\Sigma_i}(\mathrm{Indecom}(\mathrm{ind}_f(R))(i),\mathrm{ind}_f( R')(i))\]
      This map is induced by the isomorphism $\mathrm{Indecom}(R)\cong \mathrm{Indecom}(\mathrm{ind}_f(R))$ which comes from factoring $O \rightarrow 1$ as $O\xrightarrow{f} P\rightarrow 1$, and the adjunction unit $R' \rightarrow \mathrm{res}_f \mathrm{ind}_f( R')$. The adjunction unit is a weak equivalence since all modules are fibrant and $R'$ is cofibrant.

      By Lemma \ref{lem:induction}, the domain is cofibrant as a symmetric sequence, so because all symmetric sequences are fibrant in the projective model structure, the axioms of an enriched model category imply the map of mapping spectra is a weak equivalence.
\end{proof}

We now investigate how coendomorphism operads of modules interact under smashing with spectra.

\begin{lem}
    There is a map $F(S^n, S^m) \wedge \mathrm{RMod}_O(R,R') \rightarrow \mathrm{RMod}_O(S^n \wedge R, S^m \wedge R))$ which is a weak equivalence if $R$ or $\bar{S}^0 \wedge R$ is cofibrant.
\end{lem}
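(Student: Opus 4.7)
The plan is to define the map via the external smash-product pairing and then check it is a weak equivalence by cofibrant replacement and a tower comparison. For the construction, I would observe that given $\alpha:S^n\to S^m$ in $\mathrm{Spec}$ and $\phi:R\to R'$ in $\mathrm{RMod}_O$, the smash $\alpha\wedge\phi:S^n\wedge R\to S^m\wedge R'$ is again a map of right $O$-modules (by the symmetric monoidal structure), and the induced enriched pairing
\[F(S^n,S^m)\wedge\mathrm{RMod}_O(R,R')\to\mathrm{RMod}_O(S^n\wedge R,S^m\wedge R')\]
is the desired map.

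For the weak equivalence, I would first apply Lemma \ref{lem:domainsphere} to reduce to the case that $R$ is cofibrant. Next, I would replace the spheres $S^n,S^m$ by their cofibrant models: since $S^n$ has the correct mapping spectra, there is a zigzag $F(S^n,S^m)\xrightarrow{\simeq}F(\bar S^n,S^m)\xleftarrow{\simeq}F(\bar S^n,\bar S^m)$; and since $R$ is cofibrant, $\bar S^n\wedge R\to S^n\wedge R$ is a weak equivalence of cofibrant right $O$-modules (Corollary \ref{cor:cofibrantsusp} together with the Quillen bifunctor property of $\wedge$ on cofibrant objects), so the induced zigzag $\mathrm{RMod}_O(S^n\wedge R,S^m\wedge R')\xrightarrow{\simeq}\mathrm{RMod}_O(\bar S^n\wedge R,S^m\wedge R')\xleftarrow{\simeq}\mathrm{RMod}_O(\bar S^n\wedge R,\bar S^m\wedge R')$ consists of weak equivalences. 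Thus it suffices to show the cofibrant-sphere version is a weak equivalence.

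Next I would compare the homotopy-limit towers supplied by Proposition \ref{prp:tower} for both sides; these are towers of fibrations because $R$ and $\bar S^n\wedge R$ are cofibrant. By Lemma \ref{lem:indsmash}, the $i$th fiber of the target tower is $F^{\Sigma_i}(\bar S^n\wedge\mathrm{Indecom}(R)(i),\bar S^m\wedge R'(i))$, and $\mathrm{Indecom}(R)(i)$ is cofibrant by Lemma \ref{lem:induction}. The comparison on the $i$th fiber thus reduces to showing that for cofibrant $A$ and any $B$, the pairing $F(\bar S^n,\bar S^m)\wedge F(A,B)\to F(\bar S^n\wedge A,\bar S^m\wedge B)$ is a weak equivalence, which follows from $\bar S^n$ being a finite cell $S$-module and hence strongly dualizable.

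The main obstacle I anticipate is verifying that smashing with $F(\bar S^n,\bar S^m)$ commutes with the homotopy inverse limit computing $\mathrm{RMod}_O(R,R')$; this would fail for a generic spectrum. The saving grace is that $F(\bar S^n,\bar S^m)$ is weakly equivalent to the finite (possibly negatively shifted) cell spectrum $\bar S^{m-n}$, which is invertible in $\mathrm{Spec}$ up to weak equivalence. Smashing with an invertible spectrum is simultaneously a left and right Quillen equivalence, so it preserves all small homotopy (co)limits, and in particular the tower limit at hand, completing the comparison.
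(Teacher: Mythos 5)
Your proposal follows essentially the same route as the paper: construct the map by smashing, reduce via Lemma \ref{lem:domainsphere} to cofibrant $R$, compare the homotopy-limit towers from Proposition \ref{prp:tower}, and on the layers invoke the adjunction with trivial modules together with Lemma \ref{lem:indsmash}. The paper phrases the layer comparison as an induction along the fiber sequences $\mathrm{Triv}_O(R'(i)) \to R'^{\leq i} \to R'^{\leq i-1}$ rather than identifying the fibers explicitly, but these are the same computation.

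Two small remarks. First, the step where you claim $S^n\wedge R$ is a \emph{cofibrant} right $O$-module is not quite right: $S^n$ is not cofibrant in $S$-modules, so neither is $S^n\wedge R$. The conclusion you need (that precomposition with $\bar S^n\wedge R \to S^n\wedge R$ is a weak equivalence) still holds, but the correct justification is that $\bar S^0\wedge(S^n\wedge R)$ is cofibrant — $\bar S^0\wedge S^n$ being a cofibrant spectrum and smashing with a cofibrant spectrum preserving cofibrancy of right modules as in Proposition \ref{prp:quillenadj} — so $S^n\wedge R$ has the correct mapping spectra by Lemma \ref{lem:domainsphere}. Second, your observation that $F(\bar S^n,\bar S^m)\wedge -$ commutes with the homotopy inverse limit because it is weakly invertible is a genuine gap-filler: the paper invokes Proposition \ref{prp:tower} to "reduce to the truncations" without spelling this out, so you have identified and resolved a point the paper leaves implicit. (I would phrase the justification via strong dualizability, $X\wedge(-)\simeq F(DX,-)$, rather than "simultaneously left and right Quillen", but the content is correct.)
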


\begin{proof}
    The map is given by taking smash products. By Proposition \ref{prp:tower}, it suffices to show that
\[F(S^n,S^m) \wedge \mathrm{RMod}_O(R,R'^{\leq i}) \rightarrow \mathrm{RMod}_O(S^n \wedge R, S^m \wedge R'^{\leq i})\]
is a weak equivalence. There is a homotopy fiber sequence $\mathrm{Triv}_O(R'(i)) \rightarrow R'^{\leq i} \rightarrow R'^{\leq i-1}$ which gives rise to a map of homotopy fiber sequences
\begin{center}
\begin{tikzcd}[row sep=small, column sep = small]
{F(S^n,S^m)\wedge \mathrm{RMod}_O(R,\mathrm{Triv}_O(R'(i)))} \arrow[d] \arrow[r] & {F(S^n,S^m)\wedge\mathrm{RMod}_O(R,R'^{\leq i})} \arrow[d] \arrow[r] & {F(S^n,S^m)\wedge\mathrm{RMod}_O(R,R'^{\leq i-1})} \arrow[d] \\
{\mathrm{RMod}_O(S^n \wedge R,S^m \wedge \mathrm{Triv}_O(R'(i)))} \arrow[r]      & {\mathrm{RMod}_O(S^n \wedge R,S^m \wedge R'^{\leq i})} \arrow[r]     & {\mathrm{RMod}_O(S^n \wedge R,S^m \wedge R'^{\leq i-1})}     
\end{tikzcd}
\end{center}

By induction, it suffices to prove the statement of the lemma when $R'$ is concentrated in a single degree, i.e. is $\mathrm{Triv}_O(X)$ for a spectrum $X$ with a $\Sigma_k$-action. This case follows from the adjunction between indecomposables and trivial modules combined with Lemma \ref{lem:indsmash}.

\end{proof}

\begin{prp}\label{prp:coendsplit}
    If $R$ or $\bar{S}^0 \wedge R$ is cofibrant, then there is a weak equivalence of operads: \[ s_n \mathrm{CoEnd}_{\mathrm{RMod}_O}(R) \rightarrow \mathrm{CoEnd}_{\mathrm{RMod}_O}(S^n \wedge R),\]
and, if $Q$ or $\bar{S}^0 \wedge Q$ is cofibrant, there is a compatible weak equivalence of modules
\[s_{(n,0)} \mathrm{CoEnd}_{\mathrm{RMod}_O}(Q,R) \xrightarrow{\simeq} \mathrm{CoEnd}_{\mathrm{RMod}_O}(S^n \wedge Q,S^n \wedge R).\]

\end{prp}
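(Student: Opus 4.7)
The plan is to build both maps by specializing the levelwise weak equivalence of the preceding lemma to the data $m = n|I|$ and $R' = R^{\otimes I}$ (respectively $R' = R^{\otimes I}$ with $Q$ in the domain slot), and then to check that the resulting collection of levelwise equivalences assembles into a morphism of operads, and into a compatible morphism of right modules. Once that compatibility is established, the weak equivalence claims are immediate from the preceding lemma under the hypothesis on $R$, respectively on $Q$.

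Concretely, for each finite set $I$ the operad map is the composite
\[F(S^n, S^{n|I|}) \wedge \mathrm{RMod}_O(R, R^{\otimes I}) \longrightarrow \mathrm{RMod}_O(S^n \wedge R,\, S^{n|I|} \wedge R^{\otimes I}) \cong \mathrm{RMod}_O(S^n \wedge R,\, (S^n \wedge R)^{\otimes I}),\]
where the second isomorphism is the canonical symmetric monoidal coherence $(S^n \wedge R)^{\otimes I} \cong S^{n|I|} \wedge R^{\otimes I}$ in $\mathrm{RMod}_O$. I would then verify the partial composite square: unwinding, the upper map sends a pair in $F(S^n,S^{n|I|}) \wedge \mathrm{RMod}_O(R,R^{\otimes I})$ and $F(S^n,S^{n|J|}) \wedge \mathrm{RMod}_O(R,R^{\otimes J})$ to the smash of the $\circ_a$-composite in the sphere operad $S_n$ with composition through the $a$-th factor in $\mathrm{RMod}_O$, while the lower arrow is composition through the $a$-th factor in $\mathrm{CoEnd}_{\mathrm{RMod}_O}(S^n \wedge R)$. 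Commutativity reduces to the fact that the transformation $F(X,Y) \wedge \mathrm{RMod}_O(A,B) \to \mathrm{RMod}_O(X \wedge A,\, Y \wedge B)$ is natural in composition in both slots and monoidal with respect to smashing, and the $\Sigma_I$-equivariance is a parallel naturality check.

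The module map is built in exactly the same way with $Q$ in the domain slot, and the module compatibility square is the analogous diagram chase: the partial composite on $\mathrm{CoEnd}_{\mathrm{RMod}_O}(Q,R)$ is postcomposition with the coendomorphism operad acting on one tensor factor of $R^{\otimes I}$, which commutes with smash-with-$S^n$ for the same formal reason, and compatibility with the operad map of the first statement is then immediate. The main obstacle is entirely bookkeeping: one must thread the coherence $(S^n \wedge R)^{\otimes I} \cong S^{n|I|} \wedge R^{\otimes I}$ through at exactly the right point so that the $S_n$-factor of the suspended operad matches the sphere-rescaling factor on the coendomorphism side. Once this is in place, every square commutes by naturality and monoidality of the smash-composition transformation, and the weak equivalence statements are an immediate consequence of the preceding lemma.
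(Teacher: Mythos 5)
Your proposal matches the paper's argument: the paper also builds the levelwise maps by specializing the preceding lemma to $m=n|I|$ and $R' = R^{\otimes I}$ (noting $s_n$ and $s_{(n,0)}$ both denote $\mathrm{CoEnd}_{\mathrm{Spec}}(S^n)\wedge -$, so the sphere-operad factor is exactly $F(S^n, S^{n|I|})$), and the operadic and module compatibility is left as the same formal naturality-of-smash-with-composition check you describe. Your write-up simply makes explicit the levelwise identifications and the coherence $(S^n\wedge R)^{\otimes I}\cong S^{n|I|}\wedge R^{\otimes I}$ that the paper's one-line proof leaves implicit.
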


\begin{proof}
    Recall the notation $s_n$ and $s_{(n,0)}$ both denote $\mathrm{CoEnd}_\mathrm{Spec}(S^n) \wedge -$. The maps of operads and modules are then given by taking smash products of function spectra, and so are equivalences by the previous Lemma.
\end{proof}



\section{Koszul duality via categories of right modules}\label{section:koszul}
We have now developed enough of the theory of right modules and coendomorphism operads to define the Koszul dual of an operad and study its properties. This theory was developed thoroughly, and in more generality, in \cite{espic2022koszul} where it was also compared to Ching's model of Koszul duality \cite[Proposition 3.19]{espic2022koszul}. The author learned of this definition of Koszul duality in a lecture by Lurie \cite{Lurie_2023}. 

We will often abbreviate the symmetric sequence $\Sigma^\infty_+ \Sigma_i$ concentrated in degree $i$ by $\Sigma_i$. 

\begin{lem}\label{lem:sigh}
    There is a cofibrant model $T$ of $\mathrm{Triv}_O(\Sigma_1)$ with the property that $\otimes_i T $ is weakly equivalent to $\mathrm{Triv}_O(\Sigma_i)$.
\end{lem}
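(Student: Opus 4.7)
The plan is to construct $T$ explicitly as an iterated cell attachment starting from $\mathrm{Free}_O$ applied to a cofibrant model of $\Sigma_1$, and then to read off the homotopy type of each $T^{\otimes i}$ arity by arity from the wedge formula defining the tensor product of right $O$-modules.

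First I would fix $\bar{\Sigma}_1 \in \mathrm{SymSeq}(\mathrm{Spec})$ with $\bar{\Sigma}_1(1) = \bar{S}^0$ and $\bar{\Sigma}_1(k) = *$ for $k \neq 1$, and form $T^{(0)} := \mathrm{Free}_O(\bar{\Sigma}_1)$, which is cofibrant in $\mathrm{RMod}_O$ with $T^{(0)}(k) = \bar{S}^0 \wedge O(k)$ in each arity. The adjoint of the identity on $\bar{\Sigma}_1$ gives a map $T^{(0)} \to \mathrm{Triv}_O(\Sigma_1)$ that is an equivalence in arity $1$ but has homotopy fiber $\bar{S}^0 \wedge O(k)$ in each arity $k \geq 2$. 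I would iteratively attach free $O$-cells of arity $k$ to kill this excess homotopy, arity by arity, producing a sequence of cofibrations $T^{(0)} \hookrightarrow T^{(1)} \hookrightarrow \cdots$ equipped with compatible maps to $\mathrm{Triv}_O(\Sigma_1)$. Setting $T := \mathrm{colim}_k T^{(k)}$ yields a cofibrant right $O$-module with $T(1) \simeq S^0$, $T(k) \simeq *$ for $k \geq 2$, and a levelwise weak equivalence $T \xrightarrow{\simeq} \mathrm{Triv}_O(\Sigma_1)$.

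Next I would compute $T^{\otimes i}$ using the defining wedge formula. If $|K| < i$ the indexing set is empty. If $|K| > i$ some tensor factor is $T$ evaluated at a set of size $\geq 2$, hence weakly contractible, and the summand contracts. If $|K| = i$ the wedge is indexed by the $i!$ bijections $\{1,\dots,i\} \to K$, each contributing $(\bar{S}^0)^{\wedge i}$, which by Lemma \ref{lem:nice} is a cofibrant replacement of $S^0$; collecting gives $T^{\otimes i}(K) \simeq \Sigma^\infty_+ \Sigma_i = \mathrm{Triv}_O(\Sigma_i)(K)$ as $\Sigma_i$-spectra. The induced $O$-action on $T^{\otimes i}$ involving any $O(J)$ with $|J| \geq 2$ lands in arities $> i$, hence in contractible spectra, so the action is trivial up to homotopy, and the levelwise equivalence lifts to an equivalence $T^{\otimes i} \xrightarrow{\simeq} \mathrm{Triv}_O(\Sigma_i)$ in $\mathrm{RMod}_O$.

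The main obstacle will be the cellular construction of $T$ itself. Any cofibrant right $O$-module is assembled from free cells $\mathrm{Free}_O(-)$, which necessarily create material in every arity at or above their own, so no cofibrant $O$-module can be strictly concentrated in arity $1$; the iterative attachment must be arranged so that the higher-arity material introduced at each stage is cancelled homotopically by cells introduced at later stages, with the cancellation exact in the colimit. A secondary, more routine concern is that the resulting $T$ is cofibrant enough for the levelwise $\wedge$ and $\bigvee$ in the tensor product formula to preserve the weak equivalences $T(k) \simeq *$, which will follow from the standard interaction of the projective model structure on $\mathrm{RMod}_O$ with the underlying $S$-module structure on each arity.
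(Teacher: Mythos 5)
Your construction of $T$ via iterated cell attachment is one valid route to a cofibrant replacement of $\mathrm{Triv}_O(\Sigma_1)$, but you have misplaced where the real difficulty lies. What you flag as ``the main obstacle'' (that each cell attachment creates higher-arity material that must be cancelled later) is in fact the routine part: the small object argument in the projective model structure on $\mathrm{RMod}_O$ automatically produces some cofibrant $T \xrightarrow{\simeq} \mathrm{Triv}_O(\Sigma_1)$, and you do not need to manage the bookkeeping by hand.

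The genuine gap is in what you call the ``secondary, more routine concern.'' Your arity-by-arity computation of $T^{\otimes i}$ requires the levelwise smash products $T(k_1) \wedge \cdots \wedge T(k_m)$ to preserve the weak equivalences $T(k) \simeq *$ for $k \geq 2$, and for that you need the spectra $T(k)$ to be cofibrant (or at least for enough of the smash factors to be). But a cofibrant right $O$-module over an arbitrary operad $O$ in $S$-modules need \emph{not} have cofibrant levelwise spectra: the generating cells $\mathrm{Free}_O(f)$ have arities built by smashing against $O(k)$, and in the cases of interest ($O = \Sigma^\infty_+ \mathcal{F}_n$) the spectra $O(k)$ are suspension spectra, which are famously not cofibrant in $S$-modules. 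Your own base case illustrates this: $T^{(0)}(k) = \bar{S}^0 \wedge O(k)$ is only cofibrant if $O(k)$ is. This does not ``follow from the standard interaction of the projective model structure with the underlying $S$-module structure''; it is precisely the point that fails. The paper's proof sidesteps this by first taking a cofibrant replacement $O' \xrightarrow{\simeq} O$ of the operad itself, invoking Arone--Ching to conclude that cofibrant right $O'$-modules have cofibrant arities, performing the tensor-power computation over $O'$, and then transporting along $\mathrm{ind}_{O'}^O$ using Lemmas \ref{lem:symmon}, \ref{lem:induction} and Corollary \ref{cor:indequiv}. You would need to incorporate this operad-level cofibrant replacement, or find another argument establishing levelwise cofibrancy of $T(k)$, before the tensor computation goes through.
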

\begin{proof}
    Pick a cofibrant replacement $O'$ of $O$ with respect to the projective model structure on reduced operads \cite[Theorem 9.8]{arone_ching_2011}. If $T'$ denotes a cofibrant model of $\mathrm{Triv}_{O'}(\Sigma_1)$, then  we claim $\otimes_i T'$ is weakly equivalent to $\mathrm{Triv}_{O'}(\Sigma_i)$. By \cite[Proposition 9.4]{arone_ching_2011}, the spectra $T'(j)$ are cofibrant, so the smash products defining $\otimes$ are well behaved and the claim follows. By Lemma \ref{lem:symmon}, Lemma \ref{lem:induction}, and Corollary \ref{cor:indequiv}, $T:= \mathrm{ind}_{O'}^O(T')$ is cofibrant and inherits the weak equivalence type $\otimes_i T \simeq \mathrm{Triv}_O(\Sigma_i)$.
\end{proof}

Fix such a right module $T$.

\begin{dfn}\label{dfn:koszul}
    The Koszul dual of $O$ is \[K(O):=\mathrm{CoEnd}_{\mathrm{RMod}_O}(T).\]
\end{dfn}


 
\begin{prp}
    The weak equivalence class of $K(O)$ is independent of the choice of $T$.
\end{prp}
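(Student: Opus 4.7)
The plan is to apply Lemma \ref{lem:swap} to show that any two choices $T, T'$ satisfying the hypotheses of Lemma \ref{lem:sigh} yield weakly equivalent coendomorphism operads. The strategy is to interpolate through a common cofibrant right $O$-module $\hat{T}$ receiving weak equivalences from both $T$ and $T'$.

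To construct $\hat{T}$, I would take the coproduct $T \sqcup T'$ in $\mathrm{RMod}_O$, which is cofibrant, and factor the fold map $T \sqcup T' \to \mathrm{Triv}_O(\Sigma_1)$ (induced by the defining weak equivalences $T, T' \xrightarrow{\simeq} \mathrm{Triv}_O(\Sigma_1)$) as a cofibration followed by an acyclic fibration, $T \sqcup T' \hookrightarrow \hat{T} \twoheadrightarrow \mathrm{Triv}_O(\Sigma_1)$. Then $\hat{T}$ is cofibrant, and the composite maps $T \to \hat{T}$ and $T' \to \hat{T}$ are weak equivalences by two-out-of-three.

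Next I would verify the hypotheses of Lemma \ref{lem:swap} for these arrows. Cofibrancy of the sources $T$ and $T'$ is given. That the target $\hat{T}$ has correct mapping spectra follows from Lemma \ref{lem:domainsphere}, since $\bar{S}^0 \wedge \hat{T}$ is cofibrant by Corollary \ref{cor:cofibrantsusp}. For $\otimes$-niceness, I would adapt the argument in the proof of Lemma \ref{lem:sigh}: cofibrant right $O$-modules have cofibrant underlying spectra by \cite[Proposition 9.4]{arone_ching_2011}, so the levelwise wedges of smash products defining $\otimes$ on $\mathrm{RMod}_O$ preserve weak equivalences between cofibrant modules. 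Hence the induced maps $\otimes_i T \to \otimes_i \hat{T}$ and $\otimes_i T' \to \otimes_i \hat{T}$ are weak equivalences for every $i \geq 1$.

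Applying Lemma \ref{lem:swap} to each arrow in the zigzag $T \xrightarrow{\simeq} \hat{T} \xleftarrow{\simeq} T'$ then produces the desired zigzag of weak equivalences of operads connecting $\mathrm{CoEnd}_{\mathrm{RMod}_O}(T)$ and $\mathrm{CoEnd}_{\mathrm{RMod}_O}(T')$. The main obstacle is the verification of $\otimes$-niceness, which is delicate because the $\otimes$ product on $\mathrm{RMod}_O$ is not a priori Quillen compatible with the projective model structure; the step depends essentially on the levelwise cofibrancy of cofibrant right modules.
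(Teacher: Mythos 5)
Your plan to apply Lemma \ref{lem:swap} along a zigzag $T \xrightarrow{\simeq} \hat{T} \xleftarrow{\simeq} T'$ is a reasonable idea, but the verification of $\otimes$-niceness breaks down, and the paper's argument cannot be transplanted to rescue it. The paper instead constructs a \emph{direct} weak equivalence $T \to T'$ (lifting through $T' \to \mathrm{Triv}_O(\Sigma_1)$ using cofibrancy of $T$; recall all objects are fibrant) and exploits that \emph{both} endpoints are assumed to satisfy $\otimes_i(-) \simeq \mathrm{Triv}_O(\Sigma_i)$. This assumption immediately gives that $\otimes_i T$ and $\otimes_i T'$ are weakly contractible in every arity $\neq i$, so $\otimes$-niceness reduces to checking arity $i$, where $\otimes_i T(i) \to \otimes_i T'(i)$ is a wedge of smash powers of $T(1) \to T'(1)$; and $R(1)$ is cofibrant for any cofibrant $O$-module $R$ over a reduced operad, since $O(1) = S^0$ makes the arity-1 cell structure purely spectral. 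Your intermediate module $\hat{T}$ is not constructed to satisfy $\otimes_i \hat{T} \simeq \mathrm{Triv}_O(\Sigma_i)$ --- this is precisely the nontrivial property that Lemma \ref{lem:sigh} goes to some effort to arrange, and a generic cofibrant replacement of $\mathrm{Triv}_O(\Sigma_1)$ will not have it (the problem is in arities $> i$, where one must smash the weakly contractible spectra $\hat{T}(m)$, $m \geq 2$, against other levels, and for non-cofibrant levels this need not stay contractible). So the contractibility-away-from-arity-$i$ argument is unavailable for your zigzag.

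The substitute you offer --- that cofibrant right $O$-modules have cofibrant underlying spectra by \cite[Proposition 9.4]{arone_ching_2011}, so $\otimes$ preserves weak equivalences of cofibrant modules --- is false for a general operad $O$. That proposition applies to modules over a cofibrant (or at least $\Sigma$-cofibrant) operad, which is exactly why Lemma \ref{lem:sigh} first replaces $O$ by a cofibrant operad $O'$ before invoking it, and then transports along induction. For an operad like $\Sigma^\infty_+ \mathcal{F}_n$, whose levels are non-cofibrant suspension spectra, cofibrant modules can have non-cofibrant levels in arity $\geq 2$; only $R(1)$ is cofibrant for free, which is all the paper's argument needs. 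As written your proof therefore has a genuine gap. One could salvage the zigzag idea by building $\hat{T}$ so as to also satisfy $\otimes_i \hat{T} \simeq \mathrm{Triv}_O(\Sigma_i)$ (e.g., by redoing the Lemma \ref{lem:sigh} construction over $O'$ and inducing up), but at that point you have reproduced, with more overhead, the direct lifting argument the paper uses.
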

\begin{proof}
      Suppose $T'$ is another cofibrant model of $\mathrm{Triv}_O(\Sigma_1)$ such that $\otimes_i T' \simeq \mathrm{Triv}_O(\Sigma_i)$. By the cofibrancy of $T$, there is a weak equivalence $T \xrightarrow{\simeq} T'$ which we wish to demonstrate is $\otimes$-nice. Taking tensor powers yields a map $\otimes_i T \rightarrow \otimes_i T'$. Away from $i$, all spectra in these modules are weakly contractible by assumption, and so it suffices to check that $\otimes_i T (i)\rightarrow \otimes_i T'(i)$ is a a weak equivalence.

      It is simple to argue that for any operad $O$ and cofibrant $O$-module $R$, $R(1)$ is a cofibrant spectrum. Hence, the map $\otimes_i T (i)\rightarrow \otimes_i T'(i)$ is a wedge of smash products of weak equivalences of cofibrant spectra, and so is a weak equivalence.
      By Lemma \ref{lem:swap}, the resulting coendomorphism operads are connected by a zigzag of weak equivalences.
\end{proof}

\begin{thm}\label{thm:invariance}
    Given a weak equivalence of operads $O \xrightarrow{\simeq} P$, there is a weak equivalence of operads
    \[K(O) \xrightarrow{\simeq} K(P).\]
\end{thm}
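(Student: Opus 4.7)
\begin{prf}
The plan is to compare $K(O)$ and $K(P)$ by inducing up a cofibrant model $T$ over $O$ to a cofibrant model over $P$, and then identifying the mapping spectra via the Quillen equivalence $\mathrm{ind}_f \dashv \mathrm{res}_f$ of Proposition \ref{prp:quillen}.

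First, I would pick a cofibrant $O$-module $T$ modeling $\mathrm{Triv}_O(\Sigma_1)$ with the tensor-power property $T^{\otimes i} \simeq \mathrm{Triv}_O(\Sigma_i)$, as in Lemma \ref{lem:sigh}, and set $T_P := \mathrm{ind}_f(T)$. By Lemma \ref{lem:induction}, $T_P$ is cofibrant as a $P$-module. Since $\mathrm{res}_f$ preserves underlying symmetric sequences and the trivial module structure factors through the augmentation of $O$, we have $\mathrm{res}_f\,\mathrm{Triv}_P(\Sigma_1) = \mathrm{Triv}_O(\Sigma_1)$; adjointing the weak equivalence $T \xrightarrow{\simeq} \mathrm{Triv}_O(\Sigma_1)$ thus produces a comparison map $T_P \to \mathrm{Triv}_P(\Sigma_1)$, and Corollary \ref{cor:indequiv} identifies its underlying symmetric sequence with a weak equivalence, hence it is a weak equivalence of $P$-modules. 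So $T_P$ is a legitimate cofibrant replacement of $\mathrm{Triv}_P(\Sigma_1)$.

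Next, I would verify the tensor-power property for $T_P$. By Lemma \ref{lem:symmon}, $T_P^{\otimes i} \cong \mathrm{ind}_f(T^{\otimes i})$, and one checks from Lemma \ref{lem:sigh} together with the $S$-module hypotheses that $T^{\otimes i}$ remains cofibrant (its constituent spectra are smash products of cofibrant spectra). Applying Corollary \ref{cor:indequiv} then gives an underlying weak equivalence of symmetric sequences $\mathrm{ind}_f(T^{\otimes i}) \simeq T^{\otimes i} \simeq \mathrm{Triv}_O(\Sigma_i) = \mathrm{Triv}_P(\Sigma_i)$, as required. With this, Definition \ref{dfn:koszul} computes $K(P) = \mathrm{CoEnd}_{\mathrm{RMod}_P}(T_P)$.

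Finally, the induction functor supplies a natural map
\[
\mathrm{RMod}_O(T, T^{\otimes i}) \longrightarrow \mathrm{RMod}_P\bigl(\mathrm{ind}_f T, \mathrm{ind}_f T^{\otimes i}\bigr) \cong \mathrm{RMod}_P(T_P, T_P^{\otimes i}),
\]
the second isomorphism being Lemma \ref{lem:symmon}. Since $T$ and $T^{\otimes i}$ are both cofibrant and $f$ is a weak equivalence, the second half of Corollary \ref{cor:indequiv} says this map is a weak equivalence for each $i$. Functoriality of $\mathrm{ind}_f$ with respect to composition, smash products, and tensor products makes this compatible with the partial composites of Definition \ref{dfn:coend}, so it assembles into a map of operads $K(O) \to K(P)$, which is a levelwise weak equivalence. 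The main obstacle is the bookkeeping in the middle step: one must confirm that $T^{\otimes i}$ is cofibrant (or else work with a cofibrant replacement of it and track that the resulting comparison still represents the coendomorphism operad up to weak equivalence via Lemma \ref{lem:swap}), which is the only place where the interaction between $\otimes$ and the projective model structure on $\mathrm{RMod}_O$ is used.
\end{prf}
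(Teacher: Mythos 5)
Your proof takes essentially the same route as the paper, whose proof is a single sentence: apply Corollary~\ref{cor:indequiv} with $R = T$ and $R' = T^{\otimes I}$, using Lemma~\ref{lem:symmon} to identify $\mathrm{ind}_f(T^{\otimes I})$ with $(\mathrm{ind}_f T)^{\otimes I}$. Your verification that $\mathrm{ind}_f T$ is a valid cofibrant model of $\mathrm{Triv}_P(\Sigma_1)$ with the tensor-power property is correct bookkeeping (and is in fact the content of Lemma~\ref{lem:sigh}'s own proof, applied to $f$ rather than to the cofibrant replacement $O' \to O$).

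One imprecision to flag. You justify cofibrancy of $T^{\otimes i}$ by saying ``its constituent spectra are smash products of cofibrant spectra.'' That gives levelwise cofibrancy of the underlying symmetric sequence, not projective cofibrancy of $T^{\otimes i}$ as a right $O$-module, which is what the hypothesis of Corollary~\ref{cor:indequiv} actually requires (the proof there uses $R'$ cofibrant to deduce that the adjunction unit $R' \to \mathrm{res}_f\,\mathrm{ind}_f R'$ is a weak equivalence). The paper's own one-line proof silently elides the same point. Your parenthetical fallback (replace $T^{\otimes i}$ cofibrantly and invoke Lemma~\ref{lem:swap}) is the honest fix; alternatively, one can note that by construction $T = \mathrm{ind}_{O'}^O(T')$ for a cofibrant operad $O'$, so $T^{\otimes i} \cong \mathrm{ind}_{O'}^O(T'^{\otimes i})$ by Lemma~\ref{lem:symmon}, and module cofibrancy of $T^{\otimes i}$ would follow from Lemma~\ref{lem:induction} once cofibrancy of $T'^{\otimes i}$ over $O'$ is in hand.
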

\begin{proof}
 Apply Corollary \ref{cor:indequiv} to $\mathrm{RMod}_O(T,T^{\otimes I})$ using Lemma \ref{lem:symmon} to commute $\otimes$ and induction.
\end{proof}

\begin{remark}
    Somewhat surprisingly, the equivalence is in the opposite direction one would expect. The construction of this map crucially relies on $O \rightarrow P$ being a weak equivalence, otherwise $\mathrm{ind}_O^P (T)$ would not be a model of $\mathrm{Triv}_P(\Sigma_1)$. Indeed, with a suitable symmetric monoidal and spectrally enriched cofibrant replacement functor on $\mathrm{RMod}_O$, there would be the expected contravariant functoriality with respect to all maps $O \rightarrow P$. 
\end{remark}





Although only weakly functorial at the operad level, Koszul duality has an extension to an honest functor
\[K: \mathrm{RMod}_O^{\mathrm{op}} \rightarrow \mathrm{RMod}_{K(O)}.\]

\begin{dfn}\label{dfn:koszulmod}
    The Koszul dual of a right $O$-module $R$ is the $K(O)$-module given by
    \[K(R)(I) := \mathrm{CoEnd}_{\mathrm{RMod}_O}(R,T).\]
\end{dfn}

By Proposition \ref{prp:moduleinvariance}, we have the expected homotopy invariance:
\begin{prp}
    If $R\xrightarrow{\simeq} R'$ are $O$-modules with the correct mapping spectra, then
    \[ K(R') \xrightarrow{\simeq} K(R).\]
\end{prp}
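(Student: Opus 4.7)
\begin{prf}[Proof plan]
The plan is to reduce this statement directly to Proposition \ref{prp:moduleinvariance} applied with a suitable choice of ambient category. First, I would unfold Definition \ref{dfn:koszulmod}: by the definition of $\mathrm{CoEnd}_C(b,c)$, the right $K(O)$-module $K(R)$ is nothing other than $\mathrm{CoEnd}^T_{\mathrm{RMod}_O}(R, \mathrm{Id}_T)$, where the ambient category is $C = \mathrm{RMod}_O$ equipped with the tensor product of right modules defined in Section \ref{section:modules} and the projective model structure. At level $I$ this unpacks to $\mathrm{RMod}_O(R, T^{\otimes I})$.

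Next, I would verify that $\mathrm{RMod}_O$ genuinely satisfies the standing hypotheses of Proposition \ref{prp:moduleinvariance}: it is symmetric monoidal, it is $\mathrm{Spec}$-enriched as a model category, and every object is fibrant because fibrations are defined levelwise and every $S$-module is fibrant. The hypothesis that $R$ and $R'$ both have the correct mapping spectra is built into the statement of the proposition we are proving.

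With these in place, I would apply Proposition \ref{prp:moduleinvariance} with $a = R$, $b = R'$, and $f = \mathrm{Id}_T$. The relevant coendomorphism operad is $\mathrm{CoEnd}_{\mathrm{RMod}_O}(T) = K(O)$, and the proposition delivers a weak equivalence of right $K(O)$-modules
\[K(R') = \mathrm{CoEnd}^T_{\mathrm{RMod}_O}(R', \mathrm{Id}_T) \xrightarrow{\simeq} \mathrm{CoEnd}^T_{\mathrm{RMod}_O}(R, \mathrm{Id}_T) = K(R)\]
induced by precomposition with $R \to R'$, which is exactly the claim.

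Since the argument is a direct citation of a proposition proved earlier, there is no substantive obstacle. The only point worth being careful about is the (intentional) contravariance of the map: the weak equivalence $R \to R'$ induces an arrow $K(R') \to K(R)$, not the other way around, which matches the statement. Everything else is a matter of matching notation between Definition \ref{dfn:koszulmod} and the coendomorphism-module construction of Section \ref{section:modules}.
\end{prf}
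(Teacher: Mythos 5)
Your proposal is correct and matches the paper's proof, which simply cites Proposition \ref{prp:moduleinvariance}; you are just unpacking the notational identification $K(R) = \mathrm{CoEnd}_{\mathrm{RMod}_O}(R,T)$, specializing to $C = \mathrm{RMod}_O$, $a = R$, $b = R'$, $f = \mathrm{Id}_T$, and checking the standing hypotheses on the ambient category. Your aside about the direction of the map (contravariance of $K$) is also correct.
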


\section{Fulton-Macpherson and little disks }\label{section:selfdual}

In this section, we review different models of the little disks operad such as $E_n$ and $\mathcal{F}_n$. Using the functoriality and cofibrancy of some of their modules, we construct a direct map from $\Sigma^\infty_+ E_n$ to a model of $K(\Sigma^\infty_+ \mathcal{F}_n)$. 

\begin{dfn}
    The little $n$-disks operad $E_n$ in $\mathrm{Operad}(\mathrm{Top},\times)$ has its $I$th space equal to the configuration space of $I$-labeled open $n$-disks inside the open unit $n$-disk. Partial composites are given by inserting configurations of disks.
\end{dfn}

\begin{remark}\label{remark:alternative}
Observe that instead of using configurations of disks, we could have described the $I$th space as the space of embeddings of a disjoint union of $I$ unit disks $B^n$ with the requirement that the restriction to each component is a combination of scaling and translation. Let $\mathrm{Emb}^{\mathrm{scale}}(M,N)$ denote the embeddings which preserve the tangential framing up to a pointwise scalar. The collection of maps 
\[E_n(I) \rightarrow \mathrm{Emb}^{\mathrm{scale}}(\bigsqcup_I B^n, B^n). \]
sends operad composition to function composition.
\end{remark}

If $M$ is a manifold, let $\mathrm{Conf}(M,I)$ denote the configurations of $I$-labeled points in $M$. There is an operad $\mathcal{F}_n$, the Fulton-Macpherson operad which has a heuristic but essentially complete definition as follows \cite{salvatore_2001,markl_1999}:

\begin{dfn}A point in $\mathcal{F}_n(I)$ is represented by a rooted tree with leaves labeled by $I$ with the constraint that a nonroot, nonleaf vertex $v$ is labeled by $\mathrm{Conf}(\mathbb{R}^n, e(v))$ where $e(v)$ denotes the outgoing edge set. This tree should have the property that all nonroot, nonleaf vertices have at least 2 outgoing edges, and the root should have exactly 1 outgoing edge. We then quotient the labels of each vertex by translation and positive scaling. Operad composition is given by grafting trees.
\end{dfn}
We interpret the elements of $\mathcal{F}_n$ as ``infinitesimal configurations" in $\mathbb{R}^n$, modulo the given relations. The label of the vertex adjacent to the root is the ``base configuration'' and, if the tree branches, we imagine that the single point labeled by that edge is actually a configuration in the tangent space, and this may repeat. It was shown by Salvatore that this operad is related to $E_n$ by a zigzag of weak equivalences \cite[Proposition 3.9]{salvatore_2001}.

To a framed manifold $M$, we can associate a right module over $\mathcal{F}_n$:

\begin{dfn}A point in $\mathcal{F}_M(I)$ is represented by a rooted tree with leaves labeled by $I$ such that the root $r$ is labeled by $\mathrm{Conf}(M, e(r))$. If $v$ is a nonleaf vertex adjacent to the root, it is labeled by $\mathrm{Conf}(T_p(M), e(v))$ where $p$ is the point in the root configuration labeled by the edge connecting $v$ to the root. Any nonleaf child $v'$ of $v$ is labeled by $\mathrm{Conf}(T_p(M),e(v'))$ for the same $p$, and so on.  This tree should have the property that all nonroot, nonleaf vertices have at least $2$ outgoing edges. We then quotient each nonroot, labeled vertex by translation and positive scaling. Module partial composition is given by grafting trees and using the framing to identify $T_p(M)$ with $\mathbb{R}^n$.
\end{dfn}
We interpret the elements of $\mathcal{F}_{M}$ as ``infinitesimal configurations in $M$''. One important observation is that both $\mathcal{F}_M(I)$ and $\mathcal{F}_n(I)$ are manifolds of dimension $n|I|$ and $n|I|-n-1$, respectively, and the partial composites 
\[\mathcal{F}_n(I) \times \mathcal{F}_n(J) \rightarrow \mathcal{F}_n(I \cup_a J)\]
\[\mathcal{F}_M(I) \times \mathcal{F}_n(J) \rightarrow \mathcal{F}_M(I \cup_a J)\] are inclusions of codimension 0 portions of the boundary which together cover the boundary. For a detailed account of the topology of these completions, we refer to \cite{Sinha_2004b}. An immediate, and yet vital, observation is that the $\mathcal{F}_n$-module $\mathcal{F}_M$ is functorial with respect to embeddings that lie in $\mathrm{Emb}^{\mathrm{scale}}(-,-)$.

\begin{dfn}
    If $M$ is a framed $n$-manifold, the $(\mathcal{F}_n)_+$-module $\mathcal{F}_{M^+}$ is the objectwise one point compactification
    \[\mathcal{F}_{M^+}(I):=(\mathcal{F}_M)^+.\]
    The partial composites are given by one point compactification of the partial composites of $\mathcal{F}_M$, which we note are proper maps.
\end{dfn}

This module is the module of infinitesimal configurations in $M^+$ which identifies configurations which have a point at $\infty$. To avoid visual confusion we will always denote $\mathcal{F}_M$ with a disjoint basepoint as $(\mathcal{F}_M)_+$. The following proposition is the key to understanding Koszul duality for $\mathcal{F}_n$.

\begin{prp}\label{prp:trivial}
    The map $\mathcal{F}_{(B^n)^+} \rightarrow S^n \wedge \mathrm{Triv}_{(\mathcal{F}_n)_+}(\Sigma_1)$ determined by
    \[\mathcal{F}_{(B^n)^+}(1)=(B^n)^+ \cong S^n \]
    is a weak equivalence of right modules which is $\otimes$-nice.
\end{prp}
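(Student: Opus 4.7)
The plan is to prove the map is a weak equivalence in each arity and then deduce $\otimes$-niceness. Because the target $S^n\wedge\mathrm{Triv}_{(\mathcal{F}_n)_+}(\Sigma_1)$ is the basepoint in all arities $\geq 2$, the map is automatically a morphism of right modules once its arity-$1$ component is specified: any partial composite involving the arity-$1$ piece lands in arity $\geq 2$, where the target is $*$, so module compatibility is vacuous. In arity $1$ the map is the chosen identification $(B^n)^+ \cong S^n$, which is a weak equivalence, so the remaining content is to show $\mathcal{F}_{(B^n)^+}(I) \simeq *$ for $|I|\geq 2$ and to verify $\otimes$-niceness.

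Since $B^n$ is framed-diffeomorphic to $\mathbb{R}^n$, I would prove instead that $\mathcal{F}_{(\mathbb{R}^n)^+}(I) \simeq *$ for $|I| \geq 2$. The key step is the decomposition
\[
\mathcal{F}_{\mathbb{R}^n}(I) \;\cong\; \mathbb{R}^n \times [0,\infty) \times \mathcal{F}_n(I),
\]
parametrizing an infinitesimal configuration by its centroid $v \in \mathbb{R}^n$, a scale $\lambda \in [0,\infty)$ (with $\lambda = 0$ recording the deepest collapse stratum), and its normalized shape in $\mathcal{F}_n(I)$. This rests on the free action of $\mathrm{Aff}^+ = \mathbb{R}^n \rtimes \mathbb{R}_{>0}$ on $\mathrm{Conf}(\mathbb{R}^n,I)$, valid for $|I| \geq 2$ and extended continuously across the scale-zero stratum of the Fulton--MacPherson completion. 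Passing to one-point compactifications and using $(X\times Y)^+ \cong X^+ \wedge Y^+$ for locally compact Hausdorff factors gives
\[
\mathcal{F}_{(\mathbb{R}^n)^+}(I) \;\cong\; S^n \wedge A \wedge \mathcal{F}_n(I)_+,
\]
where $A = [0,\infty]$ is pointed at $\infty$. Since $A$ is pointed contractible, the smash product is contractible. Equivalently, one can write down the explicit null-homotopy $H((v,\lambda,s),t) = (v, (\lambda+t)/(1-t), s)$ for $t < 1$ with $H(-,1) = *$, realizing the slogan ``scaling configurations to infinity.''

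For $\otimes$-niceness, the $i$-th tensor power at a finite set $K$ is a wedge, over ordered partitions $K = K_1 \sqcup \cdots \sqcup K_i$ into nonempty subsets, of smash products of the values at each part. Summands with all $|K_j| = 1$ contribute $(S^n)^{\wedge i}$ on both sides through the arity-$1$ identification; summands with some $|K_j| \geq 2$ pair a contractible source factor (from the contractibility result) against a target factor $*$, so both sides of the smash are weakly contractible. The induced map on each tensor power is thus a wedge of weak equivalences.

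The main obstacle is rigorously establishing the decomposition $\mathcal{F}_{\mathbb{R}^n}(I) \cong \mathbb{R}^n \times [0,\infty) \times \mathcal{F}_n(I)$ as a stratum-respecting homeomorphism. One must choose continuous centroid and scale functions on the FM completion---for instance, the mean position of the root configuration and a smoothed diameter---and verify stratum-by-stratum that the resulting parameterization matches the corner structures on both sides. Alternatively, one may bypass the global decomposition by verifying continuity of the null-homotopy directly on the FM completion, checking convergence to the basepoint as $t \to 1$ on each stratum, including the deepest collapse where the scale inflates the infinitesimal shape into a large real configuration.
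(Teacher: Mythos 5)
Your proof takes a genuinely different route from the paper's at the crucial contractibility step. The paper argues as follows: appeal to a collar neighborhood of $\partial\mathcal{F}_{B^n}(I)$ to reduce the claim to the contractibility of the one-point compactification of the interior $\mathrm{Conf}(B^n,I)$, and then cite Arone--Ching (Examples 3.14--3.15 of \cite{arone2014manifolds}) for the ``scaling to $\infty$'' contraction. You instead posit a global product decomposition $\mathcal{F}_{\mathbb{R}^n}(I) \cong \mathbb{R}^n \times [0,\infty) \times \mathcal{F}_n(I)$ and compactify to get $\mathcal{F}_{(\mathbb{R}^n)^+}(I) \cong S^n \wedge [0,\infty] \wedge \mathcal{F}_n(I)_+$ with $[0,\infty]$ pointed at $\infty$. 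If the decomposition holds, your argument is cleaner and more self-contained; it also correctly identifies a subtlety that the collar argument is designed to sidestep: the pure $\mathbb{R}_{>0}$-scaling action fixes configurations collapsed entirely at the origin, so one cannot contract $\mathcal{F}_{(\mathbb{R}^n)^+}(I)$ by the group action alone, and your affine shift $\lambda \mapsto (\lambda+t)/(1-t)$ is exactly what is needed to push these fixed points off to $\infty$. The cost is that the decomposition is a stronger, genuinely global claim about the corner structure of $\mathcal{F}_{\mathbb{R}^n}(I)$ than the existence of a collar, and you are right to flag it as the main thing requiring proof; the paper's route avoids it entirely by borrowing the contractibility statement from the literature. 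Your $\otimes$-niceness argument is correct but more elaborate than it needs to be: once the map is a levelwise weak equivalence between right modules whose levels are pointed CW complexes, the tensor powers are wedges of smash products of such spaces, and the induced maps are automatically weak equivalences --- which is the content of the paper's terse ``all the spaces involved are CW complexes.''
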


\begin{proof}
    We must show $(\mathcal{F}_{B^n})(I)^+$ is contractible if $|I| \geq 2$. Appealing to collar neighborhoods, it suffices to pass to the subspace of configurations which have no infinitesimal components, i.e. the interior of $\mathcal{F}_{B^n}(I)$, together with the point at $\infty$. This is contractible by scaling to $\infty$ \cite[Example 3.14, Example 3.15]{arone2014manifolds}\footnote{In general, the right modules $\mathcal{F}_{M^+}$ are equivalent to the right modules $\mathbb{M}_{M^+}$ that Arone--Ching construct in \cite[Definition 3.12]{arone2014manifolds}. This is because the latter are constructed via Weiss cosheafification and the former satisfy the Weiss cosheaf property seen in Section \ref{section:selfdualitymodules}.}. The $\otimes$-niceness follows since all the spaces involved are CW complexes.
\end{proof}

\begin{dfn}
    Let $\mathcal{F}^{\mathrm{red}}_{M^+}$ denote right $(\mathcal{F}_n)_+$-module constructed as the quotient of $\mathcal{F}_{M^+}$ by the submodule of infinitesimal configurations which do not intersect every path component of $M$.
\end{dfn}

By inspection, one finds the following combinatorial description of reduced configuration spaces of manifolds:
\begin{lem}\label{lem:tensorconfig}
    If $M,N$ are connected, framed $n$-manifolds, there is an isomorphism of right modules
    \[\mathcal{F}_{M^+} \otimes \mathcal{F}_{N^+} \cong \mathcal{F}^{\mathrm{red}}_{(M \sqcup N)^+}.\]
\end{lem}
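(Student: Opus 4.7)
The plan is to reduce this to an elementary statement about Fulton--MacPherson configurations and one-point compactifications. The key geometric observation is that a configuration of distinct points in $M \sqcup N$ canonically partitions its labels according to which component each point lies in; in the Fulton--MacPherson tree model, once a root point is assigned to $M$ or $N$, all of its tangent-space infinitesimal clusters inherit the same assignment recursively, so every leaf label in $K$ is canonically assigned to $M$ or to $N$. This yields a homeomorphism
\[\mathcal{F}_{M \sqcup N}(K) \cong \bigsqcup_{K = I \sqcup J} \mathcal{F}_M(I) \times \mathcal{F}_N(J),\]
where the partition ranges over all (possibly empty) decompositions, with the convention $\mathcal{F}_-(\emptyset) = \{\mathrm{pt}\}$. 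Each stratum is clopen because the partition function is locally constant on the Fulton--MacPherson space.

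Next I would one-point compactify. Using $(X \sqcup Y)^+ \cong X^+ \vee Y^+$ and $(X \times Y)^+ \cong X^+ \wedge Y^+$, the previous identification becomes
\[\mathcal{F}_{(M \sqcup N)^+}(K) \cong \bigvee_{K = I \sqcup J} \mathcal{F}_{M^+}(I) \wedge \mathcal{F}_{N^+}(J).\]
The two summands corresponding to $I = \emptyset$ or $J = \emptyset$ simplify (via $\mathcal{F}_{-^+}(\emptyset) \cong S^0$) to $\mathcal{F}_{N^+}(K)$ and $\mathcal{F}_{M^+}(K)$ respectively, and these are visibly the subspaces of configurations that fail to meet every path component. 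Since $M$ and $N$ are connected, these are precisely the summands quotiented out in the passage to $\mathcal{F}^{\mathrm{red}}_{(M \sqcup N)^+}$, leaving
\[\mathcal{F}^{\mathrm{red}}_{(M \sqcup N)^+}(K) \cong \bigvee_{\substack{K = I \sqcup J \\ I, J \neq \emptyset}} \mathcal{F}_{M^+}(I) \wedge \mathcal{F}_{N^+}(J) = (\mathcal{F}_{M^+} \otimes \mathcal{F}_{N^+})(K).\]

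Finally I would verify compatibility with the right $(\mathcal{F}_n)_+$-action. A partial composite $\circ_a$ with an element of $(\mathcal{F}_n)_+(J')$ inserts an infinitesimal cluster at the tangent space of the point labeled $a \in K$. Since that tangent space is either $T_p M$ or $T_p N$ and the inserted labels $J'$ all live in this same tangent space, the component assignment of the new labels agrees with that of $a$. Thus partial composition at an $a \in I$ modifies only the $\mathcal{F}_{M^+}$ tensor factor, and symmetrically for $a \in J$; this is exactly how the module structure on the tensor product is defined.

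The only subtle step is the initial clopen decomposition of $\mathcal{F}_{M \sqcup N}(K)$ through the tree model, but it is a direct consequence of the fact that infinitesimal collisions can only happen between points in the same tangent space, and hence the same path component. Everything thereafter is formal manipulation of wedge and smash products.
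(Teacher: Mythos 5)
Your proof is correct, and it is exactly the verification the paper has in mind when it declares the lemma to hold ``by inspection'' (the paper offers no written argument). The clopen decomposition $\mathcal{F}_{M\sqcup N}(K)\cong\bigsqcup_{K=I\sqcup J}\mathcal{F}_M(I)\times\mathcal{F}_N(J)$, the passage through $(X\sqcup Y)^+\cong X^+\vee Y^+$ and $(X\times Y)^+\cong X^+\wedge Y^+$ for locally compact Hausdorff spaces, the identification of the $I=\emptyset$ and $J=\emptyset$ summands with the submodule quotiented out in the definition of $\mathcal{F}^{\mathrm{red}}_{(M\sqcup N)^+}$, and the check that grafting a tree at $a$ only modifies the factor containing $a$ --- this is the whole content of the observation. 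One remark on bookkeeping: the paper's symmetric sequences are indexed by nonempty finite sets, so the wedge defining $R\otimes R'$ already runs over $I,J\neq\emptyset$; your temporary conventions $\mathcal{F}_-(\emptyset)=\{\mathrm{pt}\}$ and $\mathcal{F}_{-^+}(\emptyset)=S^0$ are a harmless device for organizing the intermediate identification and disappear from the final statement, as you intend.
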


The Fulton-MacPherson operads and modules play an important homotopical role in the study of the $E_n$ operad because they have various cofibrancy properties. Let $\mathrm{Decom}(-)$ denote the decomposables of a right module, i.e. those points in the image of the right module partial composites for $O(i)$ when $i\geq 2$. The following is well known \cite{salvatore_2001}, but we provide a proof. 

\begin{prp}\label{prp:cofibrantconfig}
    If $M$ is a tame, framed $n$-manifold, the modules $\mathcal{F}_{M^+},(\mathcal{F}_{M})_+$ are cofibrant $(\mathcal{F}_n)_+$ modules with respect to the projective model structure.
\end{prp}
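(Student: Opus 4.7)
The plan is to build each of the right modules $R \in \{\mathcal{F}_{M^+}, (\mathcal{F}_M)_+\}$ cell-by-cell using an arity filtration. Let $F_i R \subseteq R$ denote the smallest sub-$(\mathcal{F}_n)_+$-module with $F_i R(j) = R(j)$ for $j \leq i$; then $F_0 R = *$, the filtration is exhaustive, and it suffices to show each inclusion $F_{i-1} R \hookrightarrow F_i R$ is a projective cofibration. I would exhibit $F_i R$ as the pushout
\[
\begin{tikzcd}
\mathrm{Free}_{(\mathcal{F}_n)_+}(\iota_i B_i) \arrow[r] \arrow[d] & F_{i-1} R \arrow[d] \\
\mathrm{Free}_{(\mathcal{F}_n)_+}(\iota_i R(i)) \arrow[r] & F_i R,
\end{tikzcd}
\]
where $\iota_i$ places a $\Sigma_i$-space in arity $i$ with basepoint elsewhere, and $B_i \subseteq R(i)$ is the image of the partial composites $R(I) \wedge (\mathcal{F}_n)_+(J) \to R(i)$ taken over all $|J| \geq 2$ (the only ones that can raise arity, since the operad is reduced).

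The key geometric input, recalled in the excerpt, is that these partial composites are inclusions of codimension-$0$ portions of $\partial \mathcal{F}_M(i)$ which together cover the boundary; consequently $B_i$ is exactly $(\partial \mathcal{F}_M(i))^+$, respectively $(\partial \mathcal{F}_M(i))_+$. Since $M$ is tame, $\mathcal{F}_M(i)$ is a manifold with corners, so a collar neighborhood of $\partial \mathcal{F}_M(i)$ realizes the inclusion $\partial \mathcal{F}_M(i) \hookrightarrow \mathcal{F}_M(i)$ as a closed $\Sigma_i$-equivariant cofibration; the action is free because the leaf labels distinguish points. Passing to one-point compactification (or adding a disjoint basepoint) preserves this property. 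The left vertical map in the diagram is therefore a generating cofibration of the projective model structure on $\mathrm{RMod}_{(\mathcal{F}_n)_+}$, hence the pushout is a cofibration, and $\ast \to R$ is a cofibration by transfinite composition up the filtration.

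The main step to verify rigorously is the identification $F_{i-1} R(i) = B_i$, i.e.\ that no arity-$i$ points of $R$ are left out after one removes the boundary covered by composites from lower arities. This is a combinatorial consequence of the tree description of $\mathcal{F}_M$: any point of $\mathcal{F}_M(i)$ lying on the boundary corresponds to a tree with at least one nonroot, nonleaf vertex, and such a point is the image of a partial composite $\mathcal{F}_M(I) \times \mathcal{F}_n(J) \to \mathcal{F}_M(i)$ obtained by cutting the tree above that vertex, with $|J| \geq 2$ automatic since each internal vertex has at least two outgoing edges. The added basepoint in either $\mathcal{F}_{M^+}(i)$ or $(\mathcal{F}_M)_+(i)$ lies in every $F_{i-1}R(i)$ by unitality, so causes no issue. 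The secondary technical check is that the free-forgetful adjunction for right modules over a pointed-space operad is Quillen, which is standard and follows from the generating-cofibration description of the projective model structure cited in Section \ref{section:modules}.
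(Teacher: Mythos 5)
Your argument is correct and follows essentially the same strategy as the paper: filter by arity, identify the arity-$i$ latching object with the decomposables $(\partial \mathcal{F}_M(i))^+$ via the tree/boundary-stratum description, and use tameness together with freeness of the $\Sigma_i$-action to see that the decomposables include into $\mathcal{F}_{M^+}(i)$ as a projective $\Sigma_i$-cofibration; your pushout diagram of free modules and the paper's two-bullet extension criterion are two packagings of the same cellular induction. The one place the paper is noticeably more careful is the clause you summarize as ``passing to one-point compactification preserves this property'': since $\infty$ is $\Sigma_i$-fixed, one cannot simply invoke an equivariant collar on $\mathcal{F}_M(i)$ and then compactify to conclude that $(\partial \mathcal{F}_M(i))^+ \hookrightarrow \mathcal{F}_{M^+}(i)$ is a cofibration in the \emph{projective} model structure on pointed $\Sigma_i$-spaces, which requires building the target from \emph{free} cells rel basepoint. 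The paper instead passes to the orbit space $\mathcal{F}_M(i)/\Sigma_i$ (a manifold, by freeness), uses tameness to get a relative CW structure on its one-point compactification rel the compactified boundary, and then lifts cells through the covering $\mathcal{F}_M(i) \to \mathcal{F}_M(i)/\Sigma_i$ to exhibit the inclusion as a relative $\Sigma_i$-free cell complex; you should spell out this step or an equivalent one.
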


\begin{proof}
We deal with the case of $\mathcal{F}_{M^+}$; the case of $(\mathcal{F}_{M})_+$ is analogous. It is certainly sufficient to show
\begin{itemize}
    \item Any map of modules $\mathcal{F}_{M^+}^{\leq i-1} \rightarrow R$ extends to $\mathcal{F}_{M^+}^{\leq i-1} \bigvee \mathrm{Decom}(\mathcal{F}_{M^+})(i)$.
    \item The inclusion of $\mathrm{Decom}(\mathcal{F}_{M^+})(i)$ into $(\mathcal{F}_{M^+})(i)$ is a $\Sigma_i$-cofibration, i.e. a cofibration in the projective model structure on $\Sigma_i$-spaces.
\end{itemize}

The first follows directly from the definition of $\mathcal{F}_{M^+}(i)$ since the decomposables are formed by by formally grafting trees. 


Now note that $\mathrm{Decom}(\mathcal{F}_{M^+})(i) = (\partial \mathcal{F}_M(i))^+$. Since the action of $\Sigma_i$ is free away from $\infty$, we see that $\mathrm{Decom}(\mathcal{F}_{M^+})(i)/\Sigma_i \rightarrow \mathcal{F}_{M^+}(i)/\Sigma_i$ is the the one point compactification of the inclusion of the boundary of a manifold. Since $M$ is tame, this is a cofibration, and so $\mathcal{F}_{M^+}(I)/\Sigma_i$ can be constructed by attaching cells to $\mathrm{Decom}(\mathcal{F}_{M^+})(I)/\Sigma_i$. Using covering theory, we can then lift this to a decomposition of $\mathcal{F}_{M^+}(i)$ as a sequence of $\Sigma_i$-free cell attachments to $\mathrm{Decom}(\mathcal{F}_{M^+})(i)$. Hence, the inclusion is a $\Sigma_i$-cofibration.
\end{proof}




Since $\mathrm{RMod}_O(-,-)$ is a spectral enrichment of $\mathrm{RMod}_O$, applying $\Omega^\infty$ yields the standard mapping spaces of $\mathrm{RMod}_O$. By Remark \ref{remark:alternative}, we may identify the elements of $E_n(I)$ with embeddings which pointwise scale the framings, and so there is a map \[E_n(I)_+ \rightarrow \Omega^\infty \mathrm{RMod}_{\Sigma^\infty_+ \mathcal{F}_n}(\Sigma^\infty\mathcal{F}_{(B^n)^+},\Sigma^\infty \mathcal{F}_{(\bigsqcup_I B^n)^+})\]
\[f \rightarrow \Sigma^\infty(\mathcal{F}_{f}:\mathcal{F}_{\bigsqcup_I B^n} \rightarrow \mathcal{F}_{ B^n} )^+.\]

\begin{dfn}
    The operadic Pontryagin--Thom construction is the adjoint of the above map:
    \[\mathrm{PT}(I):\Sigma^\infty_+ E_n(I) \rightarrow \mathrm{RMod}_{\Sigma^\infty_+ \mathcal{F}_n}(\Sigma^\infty \mathcal{F}_{(B^n)^+},\Sigma^\infty\mathcal{F}_{(\bigsqcup_I B^n)^+}).\]
\end{dfn}
In order to relate the Pontryagin--Thom construction to Koszul duality, we must first define a reduced version.

\begin{dfn}
    The reduced operadic Pontryagin--Thom construction 
    \[\mathrm{PT}^{\mathrm{red}}(I):\Sigma^\infty_+ E_n(I) \rightarrow \mathrm{RMod}_{\Sigma^\infty_+ \mathcal{F}_n}(\Sigma^\infty\mathcal{F}_{(B^n)^+},\Sigma^\infty\mathcal{F}^{\mathrm{red}}_{(\bigsqcup_I B^n)^+})\]
    is obtained by the composition of $\mathrm{PT}(I)$ with the map \[\mathrm{RMod}_{\Sigma^\infty_+ \mathcal{F}_n}(\Sigma^\infty \mathcal{F}_{(B^n)^+},\Sigma^\infty\mathcal{F}_{(\bigsqcup_I B^n)^+}) \rightarrow \mathrm{RMod}_{\Sigma^\infty_+ \mathcal{F}_n}(\Sigma^\infty\mathcal{F}_{(B^n)^+},\Sigma^\infty\mathcal{F}^{\mathrm{red}}_{(\bigsqcup_I B^n)^+}) \] which is given by postcomposition with the quotient \[\Sigma^\infty \mathcal{F}_{(\bigsqcup_I B^n)^+} \rightarrow \Sigma^\infty \mathcal{F}^{\mathrm{red}}_{(\bigsqcup_I B^n)^+}.\]
    Recall by Lemma \ref{lem:tensorconfig}, there is an isomorphism $\Sigma^\infty \mathcal{F}^{\mathrm{red}}_{(\bigsqcup_I B^n)^+} \cong \otimes_I \Sigma^\infty \mathcal{F}_{(B^n)^+}$, and so we can also write
\[\mathrm{PT}^{\mathrm{red}}(I):\Sigma^\infty_+ E_n(I) \rightarrow \mathrm{CoEnd}_{\mathrm{RMod}_{\Sigma^\infty_+ \mathcal{F}_n}}(\Sigma^\infty \mathcal{F}_{(B^n)^+})(I).\]

\end{dfn}

\begin{prp}\label{prp:operadpont}
    The reduced Pontryagin--Thom maps assemble into a weak equivalence of operads
    \[\Sigma^\infty_+ E_n \xrightarrow{\simeq} \mathrm{CoEnd}_{\mathrm{RMod}_{\Sigma^\infty_+ \mathcal{F}_n}}(\Sigma^\infty \mathcal{F}_{(B^n)^+}).\]

\end{prp}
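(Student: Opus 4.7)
The plan is to verify that $\mathrm{PT}^{\mathrm{red}}$ is a map of operads and then show each $\mathrm{PT}^{\mathrm{red}}(I)$ is a weak equivalence. For the operad compatibility, I would use Remark \ref{remark:alternative} to identify the composition in $E_n$ with composition of scale-embeddings of disks. Functoriality of $M \mapsto \mathcal{F}_M$ on $\mathrm{Emb}^{\mathrm{scale}}(-,-)$ then expresses $\mathcal{F}_{f \circ_a g}$ as $\mathcal{F}_f$ composed with an insertion by $\mathcal{F}_g$, and passing to one-point compactifications reverses arrows, matching the coendomorphism composition of Definition \ref{dfn:coend}. The further quotient to $\mathcal{F}^{\mathrm{red}}$ is compatible because insertions preserve the property of touching every component.

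For the arity-wise equivalence, I would first identify the target abstractly. By Proposition \ref{prp:cofibrantconfig}, $\mathcal{F}_{(B^n)^+}$ is cofibrant, and by Proposition \ref{prp:trivial} the equivalence $\mathcal{F}_{(B^n)^+} \simeq S^n \wedge \mathrm{Triv}_{(\mathcal{F}_n)_+}(\Sigma_1)$ is $\otimes$-nice. Combining Corollary \ref{cor:cofibrantsusp}, Lemma \ref{lem:swap}, Proposition \ref{prp:moduleinvariance}, and Proposition \ref{prp:coendsplit} should yield a zigzag of weak equivalences of operads
\[\mathrm{CoEnd}_{\mathrm{RMod}_{\Sigma^\infty_+ \mathcal{F}_n}}(\Sigma^\infty \mathcal{F}_{(B^n)^+}) \simeq s_n K(\Sigma^\infty_+ \mathcal{F}_n).\]
It then suffices to show that the composite of $\mathrm{PT}^{\mathrm{red}}(I)$ with this zigzag, namely $\Sigma^\infty_+ E_n(I) \to s_n K(\Sigma^\infty_+ \mathcal{F}_n)(I)$, is a weak equivalence.

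To verify this, I would analyze the homotopy fiber tower of Proposition \ref{prp:tower} for $\mathrm{RMod}(\Sigma^\infty \mathcal{F}_{(B^n)^+}, \Sigma^\infty \mathcal{F}^{\mathrm{red}}_{(\sqcup_I B^n)^+})$. By the proof of Proposition \ref{prp:cofibrantconfig}, the decomposables of $\mathcal{F}_{(B^n)^+}$ at arity $i$ are $(\partial \mathcal{F}_{B^n}(i))^+$, so the indecomposables are $\mathrm{Conf}(\mathbb{R}^n, i)^+$, the one-point compactification of the open configuration space. Atiyah duality for the framed open manifold $\mathrm{Conf}(\mathbb{R}^n, I)$ should identify the homotopy inverse limit of the resulting tower with $\Sigma^\infty_+ \mathrm{Conf}(\mathbb{R}^n, I) \simeq \Sigma^\infty_+ E_n(I)$, and the Pontryagin--Thom collapse realizes this identification by construction.

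The main obstacle is verifying that the geometric codimension-zero collapse map actually coincides with the Atiyah duality identification implicit in the tower. This requires careful bookkeeping of how the sphere factor in $\mathcal{F}_{(B^n)^+} \simeq S^n \wedge \mathrm{Triv}(\Sigma_1)$, whose geometric content is the ``scaling to $\infty$'' map, interacts with the collapse for the codimension-zero embedding $\mathcal{F}_{\sqcup_I B^n} \hookrightarrow \mathcal{F}_{B^n}$, and checking that the boundary stratification of $\mathcal{F}_{B^n}(I)$ controls the tower filtration in the expected way.
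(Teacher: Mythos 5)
Your strategy matches the paper's in broad outline: operad compatibility via Remark \ref{remark:alternative} plus functoriality of the Fulton--MacPherson compactifications, reduction to a computation with $\mathrm{Indecom}(\Sigma^\infty\mathcal{F}_{(B^n)^+})(i) \simeq \Sigma^\infty\mathrm{Conf}(\mathbb{R}^n,i)^+$, and a final appeal to a duality statement for the open framed manifold $\mathrm{Conf}(\mathbb{R}^n,I)$. Two comments. First, rather than running the whole tower of Proposition \ref{prp:tower} and locating the nonzero layer, the paper first replaces the codomain $\Sigma^\infty\mathcal{F}^{\mathrm{red}}_{(\bigsqcup_I B^n)^+}$ by the weakly equivalent trivial module $(S^n)^{\wedge I}\wedge\mathrm{Triv}_{\Sigma^\infty_+\mathcal{F}_n}\Sigma_I$ (via Proposition \ref{prp:trivial} and Lemma \ref{lem:tensorconfig}), after which the $\mathrm{Triv}$--$\mathrm{Indecom}$ adjunction gives $F(\Sigma^\infty\mathrm{Conf}(B^n,I)^+,(S^n)^{\wedge I})$ in one step; this makes your ``the tower collapses'' claim unnecessary to prove separately.

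Second, and more substantively, the step you flag as the ``main obstacle''---that the geometric collapse map $\mathrm{PT}^{\mathrm{red}}(I)$ agrees with the Atiyah duality identification---is precisely the content of the proof, and you leave it unresolved. The paper handles it not by an abstract comparison of two dualities but by unwinding the construction: after the reductions above, $\mathrm{PT}^{\mathrm{red}}(I)$ becomes literally the stable adjoint of the pointwise evaluation pairing
\[E_n(I)_+ \wedge \mathrm{Conf}(B^n,I)^+ \longrightarrow ((B^n)^+)^{\wedge I}\cong (S^n)^{\wedge I}, \qquad \Bigl(\bigsqcup_{i\in I} f_i,\ (x_i)_{i\in I}\Bigr)\mapsto (f_i^+(x_i))_{i\in I},\]
and this pairing is recognized as the Poincar\'e duality pairing of \cite[Theorem 5.6]{malin} for $B^n$ with trivial rank-$0$ normal bundle, whence its adjoint is an equivalence. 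Without this explicit identification---or an equivalent argument that the codimension-zero collapse realizes the duality---the proposal does not close; the observation that $\mathcal{F}_{(B^n)^+}\simeq S^n\wedge\mathrm{Triv}(\Sigma_1)$ tells you the target has the right homotopy type, but not that the specific map $\mathrm{PT}^{\mathrm{red}}(I)$ hits an equivalence.
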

\begin{proof}
     Since the identification in Remark \ref{remark:alternative} sends operad composition to function composition and the Fulton-MacPherson compactifications are functorial with respect to composition of embeddings, the reduced Pontryagin--Thom construction is a map of operads.

    By combining
    Lemma \ref{lem:domainsphere}, Corollary \ref{cor:cofibrantsusp}, Proposition \ref{prp:trivial}, and 
    Proposition \ref{prp:cofibrantconfig},
    \[\mathrm{RMod}_{\Sigma^\infty_+ \mathcal{F}_n}(\Sigma^\infty\mathcal{F}_{(B^n)^+},\Sigma^\infty \mathcal{F}^\mathrm{red}_{\bigsqcup_i (B^n)^+}) \xrightarrow{\simeq} \mathrm{RMod}_{\Sigma^\infty_+ \mathcal{F}_n}(\Sigma^\infty \mathcal{F}_{(B^n)^+},(S^n)^{\wedge i} \wedge \mathrm{Triv}_{\Sigma^\infty_+ \mathcal{F}_n}\Sigma_i)\]
is an equivalence. Since there is an adjunction between indecomposables and trivial modules, the latter spectrum
is \[F^{\Sigma_i}(\mathrm{Indecom}(\Sigma^\infty \mathcal{F}_{(B^n)^+})(i), (S^n)^{\wedge i} \wedge \Sigma^\infty_+ \Sigma_i)\] which is equivalent to $F(\mathrm{Indecom}(\Sigma^\infty \mathcal{F}_{(B^n)^+})(i),(S^n)^{\wedge i} )$. For formal reasons, the indecomposables agree with $\Sigma^\infty \mathrm{Indecom}(\mathcal{F}_{(B^n)^+})(i)$. These are $\Sigma^\infty \mathrm{Conf}(B^n,i)^+$ because the image of the unstable module composition is exactly $(\partial \mathcal{F}_{B^n}(i))^+$. Compiling these observations, our original map is equivalent to
\[\Sigma^\infty_+ E_n(i) \rightarrow F(\Sigma^\infty \mathrm{Conf}(B^n,i)^+,(S^n)^{\wedge i})\]
which is given by the stable adjoint of the pairing
\[E_n(i)_+ \wedge \mathrm{Conf}(B^n,i)^+ \rightarrow ((B^n)^+)^{\wedge i}\cong (S^n)^{\wedge i}\]
\[(\bigsqcup_{i \in I} f_i,(x_i)) \rightarrow f_i^+(x_i).\]
This pairing is equivalent to the duality pairing of \cite[Theorem 5.6]{malin} for $B^n$ with its rank $0$ normal bundle, so its stable adjoint is an equivalence.
\end{proof}

\begin{thm}[Self duality of $E_n$] \label{thm:selfdual}
    There is a zigzag of weak equivalence of operads
    \[\Sigma^\infty_+ E_n \simeq \dots \simeq s_n K(\Sigma^\infty_+ E_n).\]
\end{thm}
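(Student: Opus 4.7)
The strategy is to take the geometric Pontryagin--Thom equivalence of Proposition~\ref{prp:operadpont} as the starting point and then transform the coendomorphism operad of $\Sigma^\infty \mathcal{F}_{(B^n)^+}$ formally into $s_n K(\Sigma^\infty_+ E_n)$ using the homotopy-invariance machinery developed in Sections~\ref{section:operads}--\ref{section:koszul}.

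First, I would fix a cofibrant model $T$ of $\mathrm{Triv}_{\Sigma^\infty_+ \mathcal{F}_n}(\Sigma_1)$ via Lemma~\ref{lem:sigh}, so that by Definition~\ref{dfn:koszul} one has $K(\Sigma^\infty_+ \mathcal{F}_n) = \mathrm{CoEnd}_{\mathrm{RMod}_{\Sigma^\infty_+ \mathcal{F}_n}}(T)$. Proposition~\ref{prp:trivial} supplies a $\otimes$-nice weak equivalence $\mathcal{F}_{(B^n)^+} \xrightarrow{\simeq} S^n \wedge \mathrm{Triv}_{(\mathcal{F}_n)_+}(\Sigma_1)$ of $(\mathcal{F}_n)_+$-modules in spaces; after applying $\Sigma^\infty$ and comparing both sides with $S^n \wedge T$ through cofibrant replacement, one obtains a zigzag of $\otimes$-nice weak equivalences of $\Sigma^\infty_+ \mathcal{F}_n$-modules connecting $\Sigma^\infty \mathcal{F}_{(B^n)^+}$ to $S^n \wedge T$. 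Cofibrancy of the endpoints (after smashing with $\bar S^0$ where needed) comes from Proposition~\ref{prp:cofibrantconfig} together with Corollary~\ref{cor:cofibrantsusp}, the correct-mapping-spectra condition follows from Lemma~\ref{lem:domainsphere}, and $\otimes$-niceness survives suspension of CW complexes by Lemma~\ref{lem:nice}. Repeated application of Lemma~\ref{lem:swap} then produces a zigzag of weak equivalences of operads
\[\mathrm{CoEnd}_{\mathrm{RMod}_{\Sigma^\infty_+ \mathcal{F}_n}}(\Sigma^\infty \mathcal{F}_{(B^n)^+}) \simeq \mathrm{CoEnd}_{\mathrm{RMod}_{\Sigma^\infty_+ \mathcal{F}_n}}(S^n \wedge T).\]

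Next, since $T$ is cofibrant, Proposition~\ref{prp:coendsplit} gives a weak equivalence of operads
\[s_n K(\Sigma^\infty_+ \mathcal{F}_n) = s_n \mathrm{CoEnd}_{\mathrm{RMod}_{\Sigma^\infty_+ \mathcal{F}_n}}(T) \xrightarrow{\simeq} \mathrm{CoEnd}_{\mathrm{RMod}_{\Sigma^\infty_+ \mathcal{F}_n}}(S^n \wedge T).\]
To close the loop, Salvatore's zigzag of weak equivalences of topological operads $\mathcal{F}_n \simeq E_n$ cited in Section~\ref{section:selfdual} yields a corresponding zigzag $\Sigma^\infty_+ \mathcal{F}_n \simeq \Sigma^\infty_+ E_n$ of operads in spectra, and Theorem~\ref{thm:invariance} applied termwise produces $s_n K(\Sigma^\infty_+ \mathcal{F}_n) \simeq s_n K(\Sigma^\infty_+ E_n)$. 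Splicing this with Proposition~\ref{prp:operadpont} and the zigzag of the previous paragraph yields the desired chain.

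The main obstacle will be the bookkeeping in the first step: one must realize the single geometric map of Proposition~\ref{prp:trivial} as part of a zigzag whose intermediate terms simultaneously meet the cofibrancy and correct-mapping-spectra hypotheses of Lemma~\ref{lem:swap}, taking care that passage from topological modules to $S$-modules preserves both cofibrancy and $\otimes$-niceness. Once a cofibrant representative $S^n \wedge T$ has been compared to $\Sigma^\infty \mathcal{F}_{(B^n)^+}$ through such a zigzag, every remaining equivalence in the chain is formal.
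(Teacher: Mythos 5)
Your proposal is correct and follows essentially the same route as the paper's own proof: Proposition~\ref{prp:operadpont} reduces to identifying $\mathrm{CoEnd}(\Sigma^\infty \mathcal{F}_{(B^n)^+})$ with $s_n K(\Sigma^\infty_+ E_n)$, which is then done by the same chain of lemmas (\ref{lem:sigh}, \ref{prp:trivial}, \ref{lem:nice}, \ref{lem:swap}, \ref{lem:domainsphere}, \ref{cor:cofibrantsusp}, \ref{prp:cofibrantconfig}, \ref{prp:coendsplit}, \ref{thm:invariance}). The only presentational difference is that you spell out explicitly that Salvatore's zigzag $\mathcal{F}_n \simeq E_n$ is what feeds Theorem~\ref{thm:invariance} in the final step, which the paper leaves implicit.
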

\begin{proof}
    By Proposition \ref{prp:operadpont}, it suffices to demonstrate that $\mathrm{CoEnd}(\Sigma^\infty \mathcal{F}_{(B^n)^+})$ is related by a zigzag of weak equivalences of operads to $s_n K(\Sigma^\infty_+ E_n)$. By Proposition \ref{prp:trivial}, we already know that $\Sigma^\infty \mathcal{F}_{(B^n)^+}$ is weakly equivalent to a trivial module, and so, we just need to apply the homotopical bookkeeping of the previous sections to relate it to Koszul duality. Let $T$ denote a model of $\mathrm{Triv}_{\Sigma^\infty_+ \mathcal{F}_n}(\Sigma_1)$ which satisfies the requirements of Lemma \ref{lem:sigh}. There is a zigzag of weak equivalences
    \begin{equation*} \label{eq1}
\begin{split}
\mathrm{CoEnd}_{\mathrm{RMod}_{\Sigma^\infty_+ \mathcal{F}_n}}(\Sigma^\infty \mathcal{F}_{(B^n)^+}) & \simeq \mathrm{CoEnd}_{\mathrm{RMod}_{\Sigma^\infty_+ \mathcal{F}_n}}(S^n \wedge T)  \quad (  \ref{lem:nice},\ref{lem:swap},\ref{lem:domainsphere},\ref{cor:cofibrantsusp},\ref{prp:trivial},\ref{prp:cofibrantconfig}  ) \\
 & \simeq S_n \wedge \mathrm{CoEnd}_{\mathrm{RMod}_{\Sigma^\infty_+ \mathcal{F}_n}}(T) \quad (\ref{prp:coendsplit})\\
 & \simeq s_n K(\Sigma^\infty_+ \mathcal{F}_n) \quad (\ref{dfn:koszul})\\
 &\simeq s_n K(\Sigma^\infty_+ E_n) \quad (\ref{thm:invariance})
\end{split}
\end{equation*}
which completes the proof.
\end{proof}

\section{Extension to the modules $\mathcal{F}_M$}\label{section:selfdualitymodules}
We now prove an analogous self duality result for the modules $\mathcal{F}_M$. If $M$ is an open subset of $\mathbb{R}^n$, the construction is practically identical. However, it is not obvious how to extend the construction to a general framed manifold. Instead we rely on the multilocality properties of $\mathcal{F}_M$ which are inherited from $\mathrm{Conf}(M,-)$. This is expressed through the language of Weiss cosheaves, i.e. homotopy cosheaves with respect to the covers which contain all finite sets in some open. We briefly recall some properties of Weiss cosheaves in the category $\mathrm{RMod}_O$. For a general treatment we recommend \cite{PKayala_francis_2019,brito_weiss_2013} and refer to \cite[Section 8]{malin2023koszul} for the specific case of Weiss cosheaves in $\mathrm{RMod}_O$.

\begin{dfn}
    For a smooth $n$-manifold $M$, the poset $\mathrm{Disk}(M)$ has objects the open subsets of $M$ diffeomorphic to $\bigsqcup_{ I} \mathbb{R}^n$, where $I$ is a finite set, and morphisms given by inclusion.
\end{dfn}

\begin{dfn}
    The category $\mathrm{Mfld}^\mathrm{strict}_n$ is the discrete category with objects the framed $n$-manifolds and morphisms given by open embeddings which preserve the framing.
\end{dfn}
\noindent Suppose we have a zigzag of weak equivalences of operads together with functors
\[O_1 \simeq \dots \simeq O_k\]
\[F_i: \mathrm{Mfld}^{\mathrm{strict}}_n \rightarrow \mathrm{RMod}_{O_i}\]
with compatible natural weak equivalences among their restrictions to $\mathrm{Disk}(\mathbb{R}^n)$. If the $F_i$ satisfy the Weiss cosheaf condition \cite[Definition 2.20]{miller_ayala_francis_2020}, by \cite[Lemma 8.19]{malin2023koszul} we have for any framed manifold $M$: \[F_1(M) \simeq \dots \simeq  F_k(M)\] as right modules compatible with a, possibly different, zigzag of weak equivalences of operads \[O_1 \simeq \dots \simeq O_k.\] 
Since homotopy colimits in $\mathrm{RMod}_O$ are computed on the underlying symmetric sequences, as both are computed pointwise, it suffices to verify the Weiss cosheaf condition on the underlying symmetric sequences. In particular, if the underlying functor of symmetric sequences is weakly equivalent to \[M \rightarrow \Sigma^\infty_+ \mathrm{Conf}(M,-)\]
then the functors are Weiss cosheaves \cite[Lemma 2.5]{campos_ricardo_idrissi}.
We conclude that since $\mathcal{F}_M$ has the homotopy type of configuration space, it suffices to demonstrate the self duality of $\Sigma^\infty_+ \mathcal{F}_M$ for open subsets of $\mathbb{R}^n$ in a way that is natural with respect to inclusion.

\begin{dfn}
    For $U \subset \mathbb{R}^n$ an open subset, let $E_U$ denote the right $E_n$-module with $E_U(I)$ equal to the configurations of $I$-labeled open $n$-disks contained in $U$ with partial composites given by inserting configurations of disks.
\end{dfn}

We can think of $E_U(I)$ as a subspace of $\mathrm{Emb}^{\mathrm{scale}}(\bigsqcup_I B^n,U)$. As before, we may define a reduced Pontryagin--Thom map by taking one point compactifications of the maps on Fulton-MacPherson modules:
\[\mathrm{PT}_U^{\mathrm{red}}(I):\Sigma^\infty_+ E_U (I)\rightarrow \mathrm{RMod}_{\Sigma^\infty_+ \mathcal{F}_n}(\Sigma^\infty \mathcal{F}_{U^+},\Sigma^\infty \mathcal{F}^{\mathrm{red}}_{(\bigsqcup_I B^n)^+} ) \cong \mathrm{CoEnd}_{\mathrm{RMod}_{\Sigma^\infty_+ \mathcal{F}_n}}(\Sigma^\infty \mathcal{F}_{U^+},\Sigma^\infty \mathcal{F}_{(B^n)^+})(I).\]

\begin{thm}\label{thm:pontmod}
    The reduced Pontryagin--Thom construction for $U \subset \mathbb{R}^n$ is a weak equivalence of modules
    \[\Sigma^\infty_+ E_U \xrightarrow{\simeq} \mathrm{CoEnd}_{\mathrm{RMod}_{\Sigma^\infty_+ \mathcal{F}_n}}(\Sigma^\infty \mathcal{F}_{U^+},\Sigma^\infty \mathcal{F}_{(B^n)^+}), \]
    compatible with the weak equivalence of operads $\Sigma^\infty_+ E_n \xrightarrow{\simeq} \mathrm{CoEnd}_{\mathrm{RMod}_{\Sigma^\infty_+ \mathcal{F}_n}}(\Sigma^\infty \mathcal{F}_{(B^n)^+})$. These weak equivalences are natural with respect to inclusion.
\end{thm}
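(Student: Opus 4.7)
The plan is to adapt the proof of Proposition \ref{prp:operadpont} essentially verbatim, replacing $B^n$ by $U$ in the first slot of the coendomorphism construction. The key observation is that the homotopical inputs---cofibrancy of $\Sigma^\infty \mathcal{F}_{M^+}$ (Proposition \ref{prp:cofibrantconfig}, valid for any tame framed manifold), contractibility of $\mathcal{F}_{(B^n)^+}$ in arities $\geq 2$ (Proposition \ref{prp:trivial}), $\otimes$-niceness (Lemma \ref{lem:nice}), and the ``correct mapping spectra'' machinery---apply to open subsets $U \subset \mathbb{R}^n$ with no modification, so the same chain of weak equivalences used for $B^n$ goes through for general $U$.

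Running this chain of reductions, I would rewrite the mapping spectrum
\[
\mathrm{RMod}_{\Sigma^\infty_+ \mathcal{F}_n}\bigl(\Sigma^\infty \mathcal{F}_{U^+},\Sigma^\infty \mathcal{F}^{\mathrm{red}}_{(\bigsqcup_I B^n)^+}\bigr)
\]
by first replacing the target (via Proposition \ref{prp:trivial} and the $\otimes$-niceness lemmas) by $(S^n)^{\wedge i}\wedge \mathrm{Triv}_{\Sigma^\infty_+\mathcal{F}_n}\Sigma_i$, then applying the indecomposables--trivial adjunction to land in $F(\Sigma^\infty \mathrm{Indecom}(\mathcal{F}_{U^+})(i),(S^n)^{\wedge i})$. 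As for $B^n$, the decomposables of $\mathcal{F}_{U^+}(i)$ are exactly $(\partial \mathcal{F}_U(i))^+$ (the image of the partial composites from arity $\geq 2$), so the indecomposables are $\Sigma^\infty \mathrm{Conf}(U,i)^+$. Under these identifications, the adjoint of $\mathrm{PT}^{\mathrm{red}}_U(i)$ becomes the pairing
\[
E_U(i)_+ \wedge \mathrm{Conf}(U,i)^+ \to (S^n)^{\wedge i},\qquad (\bigsqcup_{j\in I} f_j,(x_j))\mapsto f_j^+(x_j),
\]
which is precisely the duality pairing of \cite[Theorem 5.6]{malin} for $U$ with its rank $0$ normal bundle in $\mathbb{R}^n$. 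That theorem then gives the desired weak equivalence.

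Compatibility with the operad equivalence of Proposition \ref{prp:operadpont} amounts to checking that the right $\Sigma^\infty_+ \mathcal{F}_n$-module structure on $\Sigma^\infty_+ E_U$ (insertion of configurations of disks) is intertwined by $\mathrm{PT}^{\mathrm{red}}$ with the coendomorphism module action of $\mathrm{CoEnd}_{\mathrm{RMod}_{\Sigma^\infty_+ \mathcal{F}_n}}(\Sigma^\infty \mathcal{F}_{(B^n)^+})$ on the target. This is a direct unwinding of definitions, using that the Fulton--MacPherson construction is functorial with respect to composition of scale-preserving embeddings (Remark \ref{remark:alternative}), and reduces to the analogous check already implicit in the operad case.

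Naturality in $U\subset V$ follows from verifying that the square whose vertical maps are $\Sigma^\infty_+ E_U \to \Sigma^\infty_+ E_V$ (inclusion of configurations of disks) and precomposition with the Pontryagin--Thom collapse $\Sigma^\infty \mathcal{F}_{V^+} \to \Sigma^\infty \mathcal{F}_{U^+}$ commutes with the two PT maps. This reduces to the fact that the PT collapse associated to a composition of open embeddings $\bigsqcup_I B^n \hookrightarrow U \hookrightarrow V$ factors as the composite of the individual collapses. I don't foresee a serious obstacle, since the main topological input---the duality pairing of \cite{malin}---was already proven at the needed level of generality; the remaining work is bookkeeping to assemble the module- and operad-level statements into the compatible, natural form claimed here.
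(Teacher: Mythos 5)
Your proposal is correct and follows essentially the same route as the paper: the paper likewise invokes the argument of Proposition \ref{prp:operadpont} verbatim for the weak equivalence and compatibility, and establishes naturality by exhibiting a commuting square built from the tautological pairing $E_U(I)_+ \wedge \mathcal{F}_{U^+}(K) \to \mathcal{F}_{(\bigsqcup_I B^n)^+}(K)$, which is precisely the adjoint of the collapse-composition statement you check.
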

 \begin{proof}
The same arguments as in the proof of Proposition \ref{prp:operadpont} show this map is a weak equivalence of modules, compatible with the reduced Pontryagin--Thom construction for operads. To address naturality, we consider the pairing
\[E_U(I)_+ \wedge \mathcal{F}_{U^+}(K) \rightarrow \mathcal{F}_{(\bigsqcup_I B^n)^+}(K)  \]
defined as tautological pairing which applies the Pontryagin--Thom map associated to the first coordinate to the configuration in the second coordinate. The adjoints of these pairings were used to define $\mathrm{PT}_U$, and so the naturality of $\mathrm{PT}^\mathrm{red}_U$ with respect to inclusion follows from the commutativity of the square for $U \subset V$:
\begin{center}
\begin{tikzcd}
E_U(I)_+ \wedge \mathcal{F}_{V^+}(K)  \arrow[d] \arrow[r] & E_V(I)_+ \wedge \mathcal{F}_{V^+}(K)  \arrow[d] \\
E_U(I)_+ \wedge \mathcal{F}_{U^+}(K)  \arrow[r]           & \mathcal{F}_{(\bigsqcup_I B^n)^+}(K)           
\end{tikzcd}
\end{center}
\end{proof}



Salvatore demonstrated that the Fulton--MacPherson operad is a model of the $E_n$ operad \cite[Proposition 3.9]{salvatore_2001}. The proof extends directly to the right modules $E_U$ and $\mathcal{F}_U$.

\begin{prp}\label{prp:model}
    There is a zigzag of weak equivalences of operads
    \[E_n \simeq \dots \simeq \mathcal{F}_n.\] For an open subset $U \subset \mathbb{R}^n$, there are compatible weak equivalence of modules
    \[E_U \simeq \dots \simeq \mathcal{F}_U.\]
    These are natural with respect to inclusion.
\end{prp}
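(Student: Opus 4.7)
The plan is to extend Salvatore's proof \cite[Proposition 3.9]{salvatore_2001} of $E_n \simeq \mathcal{F}_n$ to the module setting over open subsets of $\mathbb{R}^n$, while preserving naturality with respect to open inclusions. Salvatore's argument passes through an intermediate operad, call it $\mathcal{F}'_n$, obtained as a compactification of configurations of disks in the open unit disk which records both the embedded disk positions and the limiting tree data encoding possible collisions. There are canonical operad maps $\mathcal{F}'_n \to E_n$ (remembering only the open stratum, where no disks have collided) and $\mathcal{F}'_n \to \mathcal{F}_n$ (forgetting the disk positions and keeping only the tree data, up to translation and positive scaling), both of which are levelwise weak equivalences by standard deformation retraction arguments.

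For each open $U \subset \mathbb{R}^n$ I would define an intermediate right $\mathcal{F}'_n$-module $\mathcal{F}'_U$ whose value on $I$ consists of configurations of $I$-labeled disks in $U$ decorated with the same sort of tree-collision data as in $\mathcal{F}'_n$, with module partial composites given by inserting a local tree at a specified disk. The restriction maps $\mathcal{F}'_U \to E_U$ and $\mathcal{F}'_U \to \mathcal{F}_U$ are then module maps compatible with the corresponding operad maps, and the same deformation retractions used by Salvatore apply pointwise: adding or forgetting tree data at a configuration sitting inside $U$ does not change homotopy type, since the added strata are contractible fibers over the open stratum.

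Naturality with respect to an open inclusion $U \hookrightarrow V$ is immediate because each of the functors $U \mapsto E_U$, $U \mapsto \mathcal{F}'_U$, $U \mapsto \mathcal{F}_U$ is defined by demanding the root configuration sit inside $U$; an inclusion $U \subset V$ induces the evident module map at every stage of the zigzag, and the forgetful/decoration maps that provide the zigzag are defined intrinsically to the configuration data and commute with open inclusions by inspection.

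The main obstacle I expect is the careful setup of the intermediate module $\mathcal{F}'_U$ with compatible partial composite maps, since Salvatore's intermediate operad uses a specific fibered compactification whose fibered structure must interact correctly with the module partial composites (and do so compatibly with the map of operads to which $\mathcal{F}'_U$ is a module). Once this intermediate module is constructed, both the levelwise weak equivalence statements and the naturality claims follow formally from Salvatore's existing techniques, justifying the author's assertion that ``the proof extends directly to the right modules $E_U$ and $\mathcal{F}_U$.''
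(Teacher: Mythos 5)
The paper offers no proof of this proposition beyond the single sentence that Salvatore proved the operad case and ``[t]he proof extends directly to the right modules $E_U$ and $\mathcal{F}_U$.'' Your proposal therefore supplies strictly more content than the paper does, and the shape of your argument --- identify Salvatore's intermediate object, build a corresponding intermediate right module $\mathcal{F}'_U$, check the forgetful and decoration maps are module maps compatible with the operad maps, and verify naturality in $U$ by observing that all three functors are defined intrinsically in terms of where the root configuration sits --- is the right shape. You also correctly identify the one genuinely nontrivial point, namely that the intermediate object's partial composites must be set up compatibly with both the $\mathcal{F}'_n$-module structure and the two comparison maps.

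One caveat worth flagging: your description of Salvatore's intermediate operad $\mathcal{F}'_n$ as ``a compactification of configurations of disks ... which records both the embedded disk positions and the limiting tree data'' should be cross-checked against Salvatore's actual Proposition 3.9. My recollection is that Salvatore's zigzag passes through the Boardman--Vogt $W$-construction (trees with vertices labeled by infinitesimal configurations and internal edges labeled by lengths in $[0,1]$), rather than a hybrid compactification carrying literal disk positions. If so, the intermediate module you need is $W$ applied fiberwise to $\mathcal{F}_U$ (or to $E_U$), and the comparison maps are the $W$-construction counit on one side and a reparametrization map on the other. This does not change the logic of your argument --- the $W$-construction is still defined intrinsically to the configuration data, so naturality in $U$ holds by inspection, and it interacts well with module partial composites --- but the specific deformation retraction arguments you invoke would need to be the ones adapted to the $W$-construction rather than to a strata-contraction argument on a compactification. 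Since the paper itself gives no details, your proposal is a reasonable filling of the gap, provided the intermediate is taken to be whatever Salvatore actually uses.
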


\begin{cor}
    There is a zigzag of weak equivalences of operads
    \[\Sigma^\infty_+ \mathcal{F}_n \simeq \dots \simeq s_n K(\Sigma^\infty_+ \mathcal{F}_n).\]
    For an open subset $U\subset \mathbb{R}^n$, there is a compatible zigzag of weak equivalences of right modules
    \[\Sigma^\infty_+ \mathcal{F}_U \simeq \dots s_{(n,n)}K(\Sigma^\infty \mathcal{F}_{U^+})\]
\end{cor}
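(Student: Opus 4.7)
The plan is to mirror the proof of Theorem \ref{thm:selfdual} at the module level, substituting Theorem \ref{thm:pontmod} for Proposition \ref{prp:operadpont} as the starting identification, and to transfer from the $E_n$-model to the $\mathcal{F}_n$-model only at the very end via Proposition \ref{prp:model}. For the operad statement, combine Theorem \ref{thm:selfdual}, which yields $\Sigma^\infty_+ E_n \simeq s_n K(\Sigma^\infty_+ E_n)$, with the zigzag $E_n \simeq \mathcal{F}_n$ of Proposition \ref{prp:model}, and propagate each constituent weak equivalence of operads through $K(-)$ via Theorem \ref{thm:invariance} to obtain the zigzag $s_n K(\Sigma^\infty_+ E_n) \simeq s_n K(\Sigma^\infty_+ \mathcal{F}_n)$.

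For the module statement over an open $U \subset \mathbb{R}^n$, Theorem \ref{thm:pontmod} gives
\[\Sigma^\infty_+ E_U \xrightarrow{\simeq} \mathrm{CoEnd}_{\mathrm{RMod}_{\Sigma^\infty_+ \mathcal{F}_n}}(\Sigma^\infty \mathcal{F}_{U^+}, \Sigma^\infty \mathcal{F}_{(B^n)^+})\]
compatibly with the operadic Pontryagin--Thom construction. The plan is to traverse the module-level analog of the chain appearing in the proof of Theorem \ref{thm:selfdual}, namely
\[\mathrm{CoEnd}_{\mathrm{RMod}_{\Sigma^\infty_+ \mathcal{F}_n}}(\Sigma^\infty \mathcal{F}_{U^+}, \Sigma^\infty \mathcal{F}_{(B^n)^+}) \simeq \mathrm{CoEnd}_{\mathrm{RMod}_{\Sigma^\infty_+ \mathcal{F}_n}}(\Sigma^\infty \mathcal{F}_{U^+}, S^n \wedge T) \simeq s_{(n,n)} K(\Sigma^\infty \mathcal{F}_{U^+}).\]
The first step swaps $\Sigma^\infty \mathcal{F}_{(B^n)^+}$ for the trivial module $S^n \wedge T$ by invoking Proposition \ref{prp:trivial}, with the $\wedge$-niceness and cofibrancy hypotheses supplied by Lemma \ref{lem:nice}, Lemma \ref{lem:domainsphere}, Corollary \ref{cor:cofibrantsusp}, and Proposition \ref{prp:cofibrantconfig}, together with the target-swapping portion of Proposition \ref{prp:switchcoend} (whose weak equivalence hypotheses are met because $\Sigma^\infty \mathcal{F}_{U^+}$ has the correct mapping spectra by Lemma \ref{lem:domainsphere}). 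Finally, Proposition \ref{prp:model} replaces $E_U$ with $\mathcal{F}_U$ on the source of the chain, yielding the claimed zigzag.

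The step I expect to be the main obstacle is the second equivalence, an asymmetric analog of Proposition \ref{prp:coendsplit}: whereas that proposition smashes both slots by $S^n$ and produces the shift $s_{(n,0)}$, here only the target is smashed with $S^n$, so an additional suspension appears and the shift becomes $s_{(n,n)}$. Concretely, one needs to establish
\[\mathrm{CoEnd}_{\mathrm{RMod}_{\Sigma^\infty_+ \mathcal{F}_n}}(Q, S^n \wedge T)(I) = \mathrm{RMod}_{\Sigma^\infty_+ \mathcal{F}_n}(Q, S^{n|I|} \wedge T^{\otimes I}) \simeq S^n \wedge S_n(I) \wedge \mathrm{RMod}_{\Sigma^\infty_+ \mathcal{F}_n}(Q, T^{\otimes I}),\]
which follows from the fact that $\mathrm{RMod}_{\Sigma^\infty_+ \mathcal{F}_n}(Q,-)$ commutes up to equivalence with smashing by the dualizable spectrum $S^{n|I|}$ when $Q$ is cofibrant (a minor variant of the lemma preceding Proposition \ref{prp:coendsplit}), together with the identification $S^{n|I|} \simeq S^n \wedge F(S^n, S^{n|I|}) = S^n \wedge S_n(I)$ expressing the invertibility of $S^n$ in the stable homotopy category of $S$-modules. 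Compatibility of the module zigzag with the operad zigzag is automatic at each step, since every equivalence is induced by a canonical spectrally enriched construction that respects the operadic action.
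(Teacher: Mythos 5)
Your proposal is correct and follows essentially the same route as the paper: Proposition \ref{prp:model} plus the zigzag in the proof of Theorem \ref{thm:selfdual} for the operads, and Theorem \ref{thm:pontmod} followed by swapping the target to $S^n \wedge T$ and applying the suspension bookkeeping for the modules. The one point where you add something is your flag on the ``asymmetric'' suspension step. The paper cites Proposition \ref{prp:coendsplit} for the equivalence
\[
\mathrm{CoEnd}_{\mathrm{RMod}_{\Sigma^\infty_+ \mathcal{F}_n}}(\Sigma^\infty \mathcal{F}_{U^+}, S^n \wedge T) \simeq s_{(n,n)}\,\mathrm{CoEnd}_{\mathrm{RMod}_{\Sigma^\infty_+ \mathcal{F}_n}}(\Sigma^\infty \mathcal{F}_{U^+}, T),
\]
but, as you observe, that proposition as written smashes both the source and the target and yields $s_{(n,0)}$. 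The statement one actually needs is the levelwise identification
\[
\mathrm{RMod}_O\bigl(Q,\ (S^n)^{\wedge |I|} \wedge T^{\otimes I}\bigr) \simeq (S^n)^{\wedge |I|} \wedge \mathrm{RMod}_O\bigl(Q,\ T^{\otimes I}\bigr),
\]
which is the (unnumbered) lemma immediately preceding Proposition \ref{prp:coendsplit} applied with the source sphere equal to $S^0$, together with $S^n \wedge S_n(I) \simeq (S^n)^{\wedge |I|}$. This is exactly your ``minor variant,'' so your fill-in is right on target; the paper's citation there is best read as shorthand for this same argument. Everything else in your proposal — the cofibrancy and correct-mapping-spectra inputs (Lemma \ref{lem:domainsphere}, Corollary \ref{cor:cofibrantsusp}, Propositions \ref{prp:trivial}, \ref{prp:cofibrantconfig}, \ref{prp:switchcoend}), the compatibility of the module zigzag with the operad zigzag, and the final transfer from $E$-modules to $\mathcal{F}$-modules via Proposition \ref{prp:model} — matches the paper's proof.
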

\begin{proof}
    The zigzag of weak equivalences of operads starts from the zigzag $ \Sigma^\infty_+ \mathcal{F}_n \simeq \Sigma^\infty_+ E_n$ of Proposition \ref{prp:model} and is extended to the right to $s_n K(\Sigma^\infty_+ \mathcal{F}_n)$ using the zigzag appearing in the proof of Theorem \ref{thm:selfdual}.

    The zigzag of weak equivalences of right modules also starts from the zigzag $ \Sigma^\infty_+ \mathcal{F}_U \simeq \Sigma^\infty_+ E_U$ of Proposition \ref{prp:model}, and is extended to the right by the right module equivalence of Theorem \ref{thm:pontmod}: \[\mathrm{PT}_U^\mathrm{red}: \Sigma^\infty_+ E_U \rightarrow \mathrm{CoEnd}_{\mathrm{RMod}_{\Sigma^\infty_+ \mathcal{F}_n}}(\Sigma^\infty \mathcal{F}_{U^+},\Sigma^\infty \mathcal{F}_{(B^n)^+}).\] Let $T$ denote the model of $\mathrm{Triv}_{\Sigma^\infty_+ \mathcal{F}_n}(\Sigma_1)$ used in the proof of Theorem \ref{thm:selfdual}. To finish off, we extend to the right by the zigzag:

 \begin{equation*} \label{eq1}
\begin{split}
{\mathrm{RMod}_{\Sigma^\infty_+ \mathcal{F}_n}}(\Sigma^\infty \mathcal{F}_{U^+},\Sigma^\infty \mathcal{F}_{(B^n)^+}) & \simeq \mathrm{CoEnd}_{\mathrm{RMod}_{\Sigma^\infty_+ \mathcal{F}_n}}(\Sigma^\infty \mathcal{F}_{U^+},S^n \wedge T) \quad (  \ref{prp:switchcoend},\ref{lem:domainsphere},\ref{cor:cofibrantsusp},\ref{prp:trivial},\ref{prp:cofibrantconfig}  ) \\
 & \simeq s_{(n,n)}  \mathrm{CoEnd}_{\mathrm{RMod}_{\Sigma^\infty_+ \mathcal{F}_n}}(\Sigma^\infty \mathcal{F}_{U^+}, T) \quad (\ref{prp:coendsplit})\\
 & \simeq s_{(n,n)} K(\Sigma^\infty \mathcal{F}_{U^+}) \quad (\ref{dfn:koszulmod})
\end{split}
\end{equation*}

\end{proof}

Our discussion on Weiss cosheaves then implies the general result.

\begin{thm}[Self duality of $\mathcal{F}_M$]\label{thm:selfdualitymodule}
    There is a zigzag of weak equivalences of operads
    \[\Sigma^\infty_+ \mathcal{F}_n \simeq \dots \simeq s_n K(\Sigma^\infty_+ \mathcal{F}_n).\]
    For any framed manifold $M$, there is a compatible zigzag of weak equivalences of right modules
    \[\Sigma^\infty_+ \mathcal{F}_M \simeq \dots \simeq s_{(n,n)}K(\Sigma^\infty \mathcal{F}_{M^+}).\]
\end{thm}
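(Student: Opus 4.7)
The plan is to bootstrap from the corollary preceding the theorem (which handles open subsets of $\mathbb{R}^n$) to general framed manifolds via the Weiss cosheaf machinery summarized at the start of Section \ref{section:selfdualitymodules}. The operad statement requires no additional work since it does not involve $M$, so the entire content is in the module statement.

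First I would observe that each term appearing in the zigzag of equivalences proved in the corollary is in fact the value at $U$ of a functor $F_i \colon \mathrm{Mfld}^{\mathrm{strict}}_n \to \mathrm{RMod}_{O_i}$, where $O_i$ runs through the operads in the corresponding zigzag of operad equivalences. For the terms $\Sigma^\infty_+ \mathcal{F}_U$ and $\Sigma^\infty_+ E_U$, the functoriality is the evident covariant functoriality in framed embeddings. For the middle terms such as $\mathrm{CoEnd}_{\mathrm{RMod}_{\Sigma^\infty_+ \mathcal{F}_n}}(\Sigma^\infty \mathcal{F}_{U^+}, \Sigma^\infty \mathcal{F}_{(B^n)^+})$, $\mathrm{CoEnd}_{\mathrm{RMod}_{\Sigma^\infty_+ \mathcal{F}_n}}(\Sigma^\infty \mathcal{F}_{U^+}, S^n \wedge T)$, and $s_{(n,n)} K(\Sigma^\infty \mathcal{F}_{U^+})$, covariant functoriality in framed embeddings $M \hookrightarrow M'$ comes from the contravariant functoriality of $M \mapsto \Sigma^\infty \mathcal{F}_{M^+}$ (the one-point compactification collapse map $\mathcal{F}_{M'^+} \to \mathcal{F}_{M^+}$) composed with the contravariance of $\mathrm{RMod}(-, X)$ in its first argument, which is precisely the Pontryagin--Thom type functoriality already used throughout Section \ref{section:selfdual}.

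Next I would verify that each $F_i$ is a Weiss cosheaf on $\mathrm{Mfld}^{\mathrm{strict}}_n$. By the observation recalled just after the definition of $\mathrm{Mfld}^{\mathrm{strict}}_n$, it suffices to check that the underlying symmetric sequence is weakly equivalent to $M \mapsto \Sigma^\infty_+ \mathrm{Conf}(M,-)$, possibly shifted by fixed spectra. For $\Sigma^\infty_+ E_U$ and $\Sigma^\infty_+ \mathcal{F}_U$ this is immediate (the latter using Proposition \ref{prp:model} and the fact that $\mathcal{F}_M(I) \simeq \mathrm{Conf}(M,I)$). For the remaining terms, naturality with respect to inclusions of opens in $\mathbb{R}^n$ — which holds all the way through the zigzag of the corollary — combined with the computation at $U$ shows that each intermediate symmetric sequence is naturally weakly equivalent to a fixed spectrum smashed with $\Sigma^\infty_+ \mathrm{Conf}(-,I)$. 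The naturality is the critical ingredient here; it upgrades pointwise equivalence to a natural transformation of functors on open subsets of $\mathbb{R}^n$, which is what the Campos--Idrissi--Ricardo criterion requires.

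Finally, having established that each $F_i$ is a Weiss cosheaf and having the natural equivalences $F_i \simeq F_{i+1}$ on $\mathrm{Disk}(\mathbb{R}^n)$ (indeed on all open subsets of $\mathbb{R}^n$) from the corollary, I would invoke \cite[Lemma 8.19]{malin2023koszul} to conclude the existence of compatible zigzags
\[\Sigma^\infty_+ \mathcal{F}_M \simeq \dots \simeq s_{(n,n)} K(\Sigma^\infty \mathcal{F}_{M^+})\]
of right modules over a common zigzag of operads, for every framed manifold $M$. The compatibility with the operad self duality zigzag $\Sigma^\infty_+ \mathcal{F}_n \simeq \dots \simeq s_n K(\Sigma^\infty_+ \mathcal{F}_n)$ is automatic from the compatibility in the corollary, since the cosheaf extension is a formal operation that respects the underlying operad zigzag. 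The main obstacle I expect is the bookkeeping in the second step: verifying that every intermediate mapping spectrum really does have underlying symmetric sequence naturally equivalent to (a shift of) $\Sigma^\infty_+ \mathrm{Conf}(-,I)$, rather than merely pointwise equivalent, and in particular that the contravariant Pontryagin--Thom functoriality of $\mathcal{F}_{M^+}$ is compatible, under the zigzag, with the honestly covariant functoriality of the configuration space side.
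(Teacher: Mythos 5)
Your proposal follows the paper's approach essentially verbatim: you establish the zigzag for opens $U \subset \mathbb{R}^n$ naturally in inclusion (the preceding corollary), extend each term to a functor on $\mathrm{Mfld}^{\mathrm{strict}}_n$, verify the Weiss cosheaf condition by reducing to the underlying symmetric sequence being equivalent to $\Sigma^\infty_+ \mathrm{Conf}(-,-)$, and invoke the cosheafification lemma from \cite{malin2023koszul}. The paper compresses this into the single sentence ``Our discussion on Weiss cosheaves then implies the general result,'' but the details you supply are precisely the ones implicit in that discussion; the only minor point worth noting is that the operad zigzag produced by the cosheafification step is not literally the input zigzag but only a compatible one, which the theorem statement already allows for.
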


\newpage

\bibliographystyle{plain}
\bibliography{main}
\end{document}